\renewcommand{\bar}[1]{\overline{#1}}
\newcommand{\PisoG}{\text{PIso}(\Cat,\dmap)}
\newcommand{\ZZ}{\mathbb{Z}}
\newcommand{\NN}{\mathbb{N}}
\newcommand{\ideQ}{\mathbf{1}_Q}
\newcommand{\ideG}{\mathbf{1}_G}
\newcommand{\osh}[1][]{\sigma_{#1}}
\newcommand{\dom}{\operatorname{dom}}
\newcommand{\ran}{\operatorname{ran}}
\newcommand{\G}{\mathcal{G}}
\newcommand{\HG}{\mathcal{H}}
\newcommand{\Cat}{\Lambda}
\newcommand{\Obj}{\Lambda^\circ}
\newcommand{\Inv}{\mathcal{I}}
\newcommand{\Grpd}{\mathcal{G}}
\newcommand{\Krpd}{\mathcal{K}}
\newcommand{\Grpdu}{\mathcal{G}^{(0)}}
\newcommand{\Grpdc}{\mathcal{G}^{(2)}}
\newcommand{\Semi}{\mathcal{S}}
\newcommand{\Idem}[1][\Semi]{\mathcal{E}(#1)}
\newcommand{\Idemp}{\mathcal{E}}
\newcommand{\dmap}{\mathbf{d}}
\newcommand{\tmap}{\mathbf{t}}
\newcommand{\dbar}{\bar{\mathbf{d}}}
\newcommand{\elmap}[2]{
	\tau^{#1}\osh^{#2}}
\newtheorem{lemma}{Lemma}[section]
\newtheorem{corollary}[lemma]{Corollary}
\newtheorem{theorem}[lemma]{Theorem}
\newtheorem{proposition}[lemma]{Proposition}
\theoremstyle{definition}
\newtheorem{definition}[lemma]{Definition}
\newtheorem{example}[lemma]{Example}
\newtheorem{notation}[lemma]{Notation}
\newtheorem{remark}[lemma]{Remark}
\newtheorem{remas}[lemma]{Remarks}
\title[Groupoid partial actions]{Zappa-Sz\'ep products for partial actions of groupoids on Left Cancellative Small Categories.}
\begin{document}

\author{Eduard Ortega}
\address{Department of Mathematical Sciences\\
NTNU\\
NO-7491 Trondheim\\
Norway } \email{Eduardo.Ortega@math.ntnu.no}

\author{Enrique Pardo}
\address{Departamento de Matem\'aticas, Facultad de Ciencias\\ Universidad de C\'adiz, Campus de
Puerto Real\\ 11510 Puerto Real (C\'adiz)\\ Spain.}
\email{enrique.pardo@uca.es}\urladdr{https://sites.google.com/a/gm.uca.es/enrique-pardo-s-home-page/}


\thanks{The second-named author was partially supported by PAI III grant FQM-298 of the Junta de Andaluc\'{\i}a, by the European Union under the 2014-2020 ERDF Operational Programme and by the Department of Economy, Knowledge, Business and University of the Junta de Andaluc\'{\i}a, jointly, through grant FEDER-UCA18-107643, and by the DGI-MINECO and European Regional Development Fund, jointly, through grants MTM2017-83487-P and PID2020-113047GB-I00}

\subjclass[2010]{46L80, 46L55}

\keywords{Left cancellative small category, groupoid partial action, self-similar action, Zappa-Sz\'ep product}

\date{\today}

 
 \begin{abstract}
We study groupoid actions on left cancellative small categories and their associated Zappa-Sz\'ep products. We show that certain left cancellative small categories with nice length functions can be seen as Zappa-Sz\'ep products. We compute the associated tight groupoids, characterizing important properties of them, like being Hausdorff, effective and minimal. Finally, we determine amenability of the tight groupoid under mild, reasonable hypotheses. 
 \end{abstract}
 
 \maketitle
 
\section*{Introduction} 

 In \cite{S1}, Spielberg described a new method of defining $C^*$-algebras associated to oriented combinatorial data, generalizing the construction of algebras from directed graphs, higher-rank graphs, and (quasi-)ordered groups. To this end, he introduced \emph{categories of paths} --i.e. cancellative small categories with no (nontrivial) inverses-- as a generalization of higher rank graphs, as well as ordered groups. The idea is to start with a suitable combinatorial object and define a $C^*$-algebra directly from what might be termed the generalized symbolic dynamics that it induces. Associated to the underlying symbolic dynamics, he presents a natural groupoid derived from this structure. The construction also gives rise to a presentation by generators and relations, tightly related to the groupoid presentation. In \cite{S2} he showed that most of the results hold when relaxing the conditions, so that right cancellation or having no (nontrivial) inverses are taken out of the picture. 
 
 In \cite{OP_LCSC}, the authors studied Spielberg's construction, using a groupoid approach based in the Exel's tight groupoid construction \cite{E1}, showing that the tight groupoid for these inverse semigroups coincide with Spielberg's groupoid \cite{S-LMS}. With this tool at hand, they were able to characterize simplicity for the algebras associated to finitely aligned left cancellative small categories, and in particular in the case of Exel-Pardo systems \cite{EP2}. Finally, they gave, under mild and necessary hypotheses, a characterization of amenability for such a groupoid.
 
  Therefore, it becomes important to understand the internal structure of the left cancellative small category to check the desired properties of the associated groupoid, and hence of its associated ($C^*$-)algebra. A classical idea is to decompose our complex object in different simple pieces with well-behaved relations between them. This was well studied in \cite{LW,LV2}, where it was proved that categories with length functions on $\NN^k$ with certain decomposition properties can be written as the Zappa-Sz\'ep product of the groupoid of invertible elements of the category and a higher-rank graph subcategory generated by a transversal of generators of maximal right ideals. Zappa-Sz\'ep products of left cancellative small categories and groups were studied by  B\'edos, Kaliszewski, Quigg and Spielberg in 
  \cite{BKQS}, where they studied the representation theory for the Spielberg algebras of the new left cancellative small category associated to this construction.
   
In the present paper, we extend the scope of  \cite{BKQS} to actions of groupoids, To this end, we define groupoid actions on a left cancellative small category and their Zappa-Sz\'ep products,
    and we show that Zappa-Sz\'ep products appear naturally  in the context of left cancellative small categories with  length functions. Finally, we will extend the results of \cite[Sections 7 \& 8]{OP_LCSC} to determine the essential properties of the tight groupoid associated to  Zappa-Sz\'ep products of groupoid actions on a left cancellative small category, including the amenability of its tight groupoid.
   
    The contents of this paper can be summarized as follows: In Section 1 we recall some known results on small categories, and we define length functions and factorization properties we will need in the sequel.  In Section 2 we define groupoid actions on left cancellative small categories. In Section 3 we define the Zappa-Sz\'ep products of certain groupoid actions on left cancellative small categories. In Section 4 we show that left cancellative small categories with nice length functions can be described as Zappa-Sz\'ep products of the action of their groupoid of invertible elements  on certain nice subcategories.  In Section 5 we analyze the structure of the tight groupoid associated to Zappa-Sz\'ep products of groupoid actions to left cancellative small categories. We close the paper studying, in Section 6, the amenability of the tight groupoid of this kind of Zappa-Sz\'ep products.

\section{Small categories.}

In this section we collect all the basic background material about small categories  we need for the rest of the paper. For more details see \cite{OP_LCSC}.

Given a small category $\Cat$, we will denote by $\Obj$ its objects, and we will identify $\Obj$ with the identity morphisms, so that $\Obj\subseteq \Cat$. Given $\alpha\in\Cat$, we will denote by $s(\alpha):=\dom(\alpha)\in \Obj$ and $r(\alpha):=\ran(\alpha)\in \Obj$. The right invertible elements of $\Cat$ are 
$$\Cat^{-1}:=\{\alpha\in\Cat: \exists \beta\in\Cat\text{ such that }\alpha\beta=s(\beta)\}\,.$$
\begin{definition}
	Given a small category $\Cat$, and let $\alpha,\beta,\gamma\in\Cat$:
	\begin{enumerate}
		\item $\Cat$ is \emph{left cancellative} if $\alpha\beta=\alpha\gamma$ then $\beta=\gamma$,
		\item $\Cat$ is \emph{right cancellative} if $\beta\alpha=\gamma\alpha$ then $\beta=\gamma$,
		\item $\Cat$ \emph{has no inverses} if $\alpha\beta=s(\beta)$ then $\alpha=\beta=s(\beta)$.
	\end{enumerate}

A \emph{category of paths} is a small category that is right and left cancellative and has no inverses. 
\end{definition}

Notice that if $\Cat$ is either left or right cancellative, then the only idempotents in $\Cat$ are $\Obj$. Therefore, given $\alpha\in \Cat^{-1}$ with right inverse $\beta$,  we have that $\beta\alpha=s(\alpha)$. Thus, $\Cat^{-1}$ is the set of the right and left invertible morphisms. Given $\alpha\in \Cat^{-1}$ we will denote by $\alpha^{-1}$ its inverse. 

\begin{definition}\label{defi1_1_1_2}
	Let $\Cat$ be a small category. Given $\alpha,\beta\in\Cat$, we say that $\beta$ \emph{extends $\alpha$} (equivalently \emph{$\alpha$ is an initial segments of $\beta$}) if there exists $\gamma\in\Lambda$ such that $\beta=\alpha\gamma$. We denote by $[\beta]=\{\alpha\in\Cat:\alpha\text{ is an initial segment of }\beta\}$. We write $\alpha\leq \beta$ if $\alpha\in[\beta]$.
\end{definition}

Let $\Cat$ be a left cancellative small category. Then given $\alpha,\beta\in \Cat$, then the following is equivalent 
\begin{enumerate}
	\item $\alpha\leq \beta$ and $\beta\leq \alpha$ ($\alpha\approx\beta$),
	\item $\beta\in \alpha\Cat^{-1}$,
	\item $\alpha\in \beta\Cat^{-1}$,
	\item $\alpha\Cat=\beta\Cat$,
	\item $[\alpha]=[\beta]$.
\end{enumerate}

\begin{notation}
	Let $\Cat$ be a left cancellative small category. Given $\alpha,\beta\in\Cat$, we say :
	\begin{enumerate}
		\item $\alpha\Cap \beta$ if and only if $\alpha\Cat\cap\beta \Cat\neq\emptyset$,
		\item $\alpha\perp \beta$ if and only if $\alpha\Cat\cap\beta \Cat=\emptyset$.
	\end{enumerate}
\end{notation}

\begin{definition}
	Let $\Cat$ be a left cancellative small category, and let $F\subset \Cat$. The elements of $\bigcap\limits_{\gamma\in F}\gamma\Cat$ are \emph{the common extensions of $F$}. A common extension $\varepsilon$ of $F$  is \emph{minimal} if for any common extension $\gamma$ with $\varepsilon\in \gamma\Cat$ we have that $\gamma\approx \varepsilon$.
\end{definition}

When $\Cat$ has no inverses, given $F\subseteq \Cat$ and given any minimal common extension $\varepsilon$ of $F$, if $\gamma$ is common extension of  $F$ with $\varepsilon\in \gamma\Cat$ then $\gamma= \varepsilon$. We will denote by 
$$\alpha\vee \beta:=\{\text{the minimal extensions of }\alpha\text{ and }\beta\}\,.$$
Notice that if $\alpha\vee \beta\neq \emptyset$ then $\alpha\Cap \beta$, but the converse fails in general. 

\begin{definition}
	A left cancellative small category $\Lambda$ is \emph{finitely aligned} if for every $\alpha,\beta\in\Lambda$ there exists a finite subset $\Gamma\subset \Lambda$ such that $\alpha\Lambda\cap\beta\Lambda=\bigcup\limits_{\gamma\in \Gamma}\gamma\Lambda$.
\end{definition}

When $\Lambda$ is a finitely aligned left cancellative small category, we can always assume that $\alpha\vee\beta=\Gamma$ where $\Gamma$ is a finite set of minimal common extensions of  $\alpha$ and $\beta$.
\begin{definition}\label{definition2_3_7}
	Let $\Lambda$ be a LCSC and $\alpha\in \Lambda$. A subset $F\subset r(\alpha)\Lambda$ is \emph{exhaustive with respect to $\alpha$} if for every $\gamma\in \alpha\Lambda$ there exists a $\beta\in F$ with $\beta\Cap\gamma$. We denote $\mathsf{FE}(\alpha)$ the collection of finite sets of $r(\alpha)\Lambda$ that are exhaustive with respect to $\alpha$.
\end{definition}

\begin{definition}
	Let $\Cat$ be a LCSC and let $\Gamma\subseteq  Q$ be a submonoid of a group $Q$ with unit element $\ideQ$, and such that  $\Gamma\cap\Gamma^{-1}=\{\ideQ\}$. A map $\dmap:\Lambda\to\Gamma$ is called  a \emph{length function} if
	\begin{enumerate}
		\item $\dmap(\alpha\beta)=\dmap(\alpha)\dmap(\beta)$ for every $\alpha,\beta\in\Lambda$ with $s(\alpha)=r(\beta)$,
\end{enumerate}

A length function is say to satisfy the \emph{weak factorization property} if
\begin{enumerate}	
	\item[\textbf{(WFP)}]  for every $\alpha\in \Lambda$ and $\gamma_1,\gamma_2\in \Gamma$ with $\dmap(\alpha)=\gamma_1\gamma_2$, there are  $\alpha_1,\alpha_2\in \Lambda$ with $s(\alpha_1)=r(\alpha_2)$, $\dmap(\alpha_i)=\gamma_i$ for $i=1,2$, such that $\alpha=\alpha_1\alpha_2$, and moreover for every $\beta_1,\beta_2\in \Lambda$ with $s(\beta_1)=r(\beta_2)$, $\dmap(\beta_i)=\gamma_i$ for $i=1,2$, such that $\alpha=\beta_1\beta_2$, there exists $g_1,g_2\in \Cat^{-1}$ such that $\beta_1=\alpha_1g_1$ and $\beta_2=g_2\alpha_2$. 
	
\end{enumerate}	 
\end{definition}
Observe that, given a LCSC $\Cat$, always exists what we will call the \emph{trivial length function} $\dmap:\Cat \to \Gamma$, defined by $\dmap(\alpha)=\ideQ$ for every $\alpha\in \Cat$. 

The above definition is motivated by the one given  in \cite[Section 3]{LV2} where there the authors only consider the case when $\Gamma=\NN^k$.

\begin{remas}\label{rema_WFP} 
	Let $\Cat$ be a LCSC  and let $\Gamma\subseteq Q$ be a submonoid of a group $Q$  with unit element $\ideQ$, and such that  $\Gamma\cap\Gamma^{-1}=\{\ideQ\}$. Then, given  a length function   $\dmap:\Lambda\to\Gamma$  satisfying the WFP, we have:
	\begin{enumerate}
		\item  $\dmap^{-1}(\ideQ)=\Cat^{-1}$. Indeed, first observe that, given $v\in \Cat^0$, we have that $\dmap(v)=\dmap(vv)=\dmap(v)\dmap(v)$, and hence $\dmap(v)=\ideQ$. Now, let $g\in \Cat^{-1}$. Then $$\ideQ=\dmap(s(g))=\dmap(g^{-1}g)=\dmap(g^{-1})\dmap(g)\,.$$
		Therefore, $\dmap(g^{-1})=\dmap(g)^{-1}$, whence $\dmap(g)=\ideQ$. Now, let $\alpha\in \Cat$ such that $\dmap(\alpha)=\ideQ$. Since $\alpha=\alpha s(\alpha)=r(\alpha)\alpha$, we have that $\dmap(\alpha s(\alpha))=\ideQ\ideQ=\dmap(r(\alpha)\alpha)$. By the WFP, there exists $g_1,g_2\in \Cat^{-1}$ such that $\alpha=r(\alpha)g_1$ and $s(\alpha)=g_2\alpha$. Thus, $\alpha=g_1\in \Cat^{-1}$.		\item 	If $\Cat$ has no inverses, then the weak factorization property is equivalent to the so-called \emph{unique factorization property} \cite[Defintion 6.1]{RW}. 
	\end{enumerate}

\end{remas}
 

\begin{definition}
	A LCSC $\Cat$ is called \emph{action-free} if the action of $\Lambda^{-1}$ on $\Lambda$ is free; that is, whenever $g\gamma=\gamma$ for some $\gamma\in \Cat$ and $g\in \Cat^{-1}$, then $g=r(\gamma)$.
\end{definition}

Observe that if $\Cat$ has no inverses, then $\Cat$ is action-free.

\begin{lemma}\label{pseudofree_cat_right_cancellative}
	Let $\Cat$ be a LCSC and let  $\dmap:\Cat\to \Gamma$ be a length function satisfying the WFP.  Then $\Cat$ is right cancellative if and only if $\Cat$ is action-free.
\end{lemma}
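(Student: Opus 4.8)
The plan is to prove the two implications separately. The forward implication is immediate and does not use the length function, while the converse is the substantial direction and is where the WFP together with action-freeness does the work.

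For the direction \emph{right cancellative $\Rightarrow$ action-free}, suppose $\Cat$ is right cancellative and take $g\in\Cat^{-1}$ and $\gamma\in\Cat$ with $g\gamma=\gamma$. Composability and equality of ranges force $s(g)=r(\gamma)$ and $r(g)=r(\gamma)$, so $g\gamma=\gamma=r(\gamma)\gamma$; right-cancelling $\gamma$ gives $g=r(\gamma)$, which is exactly action-freeness.

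For the converse, assume $\Cat$ is action-free and suppose $\mu\alpha=\nu\alpha$ with $s(\mu)=s(\nu)=r(\alpha)$; set $\delta:=\mu\alpha$. First I would apply multiplicativity of $\dmap$ and cancellation in the group $Q$ to get $\dmap(\mu)=\dmap(\nu)=:q_1$, and write $q_2:=\dmap(\alpha)$, so that $\dmap(\delta)=q_1q_2$. Applying the WFP to $\delta$ with this splitting produces a canonical factorization $\delta=\eta_1\eta_2$ with $\dmap(\eta_1)=q_1$ and $\dmap(\eta_2)=q_2$. Since $\mu\cdot\alpha$ and $\nu\cdot\alpha$ are two factorizations of the \emph{same} $\delta$ with the \emph{same} degrees, the uniqueness clause of the WFP (applied to each against the single canonical factorization) yields $g_1,g_2,h_1,h_2\in\Cat^{-1}$ with $\mu=\eta_1g_1$, $\alpha=g_2\eta_2$, $\nu=\eta_1h_1$ and $\alpha=h_2\eta_2$.

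Now comes the key use of action-freeness. From $g_2\eta_2=\alpha=h_2\eta_2$ I would form the invertible loop $k:=h_2g_2^{-1}$ at $r(\alpha)$ and compute $k\alpha=h_2(g_2^{-1}g_2)\eta_2=h_2\eta_2=\alpha$; action-freeness then forces $k=r(\alpha)$, i.e. $g_2=h_2$. Next, substituting the factorizations into $\delta=\mu\alpha=\eta_1(g_1g_2)\eta_2$ and comparing with $\delta=\eta_1\eta_2$, left cancellativity gives $(g_1g_2)\eta_2=\eta_2$; applying action-freeness once more yields $g_1g_2=r(\eta_2)$, so $g_1=g_2^{-1}$, and symmetrically $h_1=h_2^{-1}$. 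Combining with $g_2=h_2$ gives $g_1=h_1$, whence $\mu=\eta_1g_1=\eta_1h_1=\nu$, as required.

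I expect the main obstacle to be the temptation to cancel $\eta_2$ (or $\alpha$) on the right when comparing the two factorizations: that is exactly right cancellation, the very property we are trying to establish, so it must be avoided. The point of the argument is that the WFP reduces the discrepancy between the two factorizations to invertible \emph{correction} terms $g_i,h_i$, and action-freeness is precisely what shows those corrections are trivial, breaking the apparent circularity. A secondary point requiring care is the bookkeeping of sources and ranges, to ensure that every product written above is composable and that each invertible $k$ genuinely lies in the isotropy group at the object where action-freeness is invoked.
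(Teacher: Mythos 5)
Your proof is correct and follows essentially the same route as the paper: apply multiplicativity of $\dmap$ and cancellation in $Q$ to equalize degrees, use the WFP to express the two factorizations of $\mu\alpha=\nu\alpha$ up to invertible correction terms, and then invoke left cancellation together with action-freeness to show those invertibles are identities. The only difference is cosmetic: you compare both factorizations against the canonical WFP factorization $\eta_1\eta_2$, whereas the paper compares the factorization $\beta\gamma$ directly against $\alpha\gamma$; your version is, if anything, slightly more faithful to the literal statement of the WFP, and you also spell out the easy forward implication that the paper leaves implicit.
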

\begin{proof}
	Let $\alpha,\beta,\gamma\in \Cat$ with $\alpha\gamma=\beta\gamma$, there exist $g_1,g_2\in \Cat^{-1}$ with $\beta=\alpha g_1$ and $\gamma=g_2\gamma$. Therefore, $\alpha\gamma=\alpha g_1g_2\gamma$, so $\gamma=g_1g_2\gamma$ by left cancellation and $g_1g_2=r(\gamma)$ by action-freeness. Hence $g_2=g_1^{-1}$. Finally we have that $g_1^{-1}\gamma=\gamma$, and using action-freeness again we have that $g_1^{-1}=r(\gamma)$ and so $\beta=\alpha$.
\end{proof}

\begin{definition}
	A \emph{groupoid} is a category such that every morphism has an inverse. Given a  groupoid $\Grpd$ we denote by $\Grpdu$ the set of identity functions on its objects (\emph{units of $\Grpd$}). Moreover, we define the \emph{range} and \emph{source} functions $r:\Grpd\to \Grpdu$ and $s:\Grpd\to \Grpdu$ by $r(g)=gg^{-1}$ and $s(g)=g^{-1}g$ for every $g\in \Grpd$. We denote by $\Grpdc:=\{(g,h)\in \Grpd\times \Grpd: s(g)=r(h)\}$ the set of \emph{composable} morphisms. A \emph{topological groupoid} is a groupoid with a topology that makes the product and inverse operations continuous. A \emph{discrete groupoid} is a topological groupoid with the discrete topology.
\end{definition}

\begin{example}
	Given a LCSC $\Cat$, $\Cat^{-1}$ is a (discrete) groupoid where  we can identify $(\Cat^{-1})^{(0)}$  with $\Cat^0$. 
\end{example}

\section{Groupoid partial actions}

In this section, we will define actions of groupoids on left cancellative small categories.  This is inspired in the construction of partial actions of groupoids on graphs \cite[Sections 2 \& 3]{LRRW}.\vspace{.2truecm}

For the rest of the paper we will assume that  $\Gamma\subseteq Q$ is a submonoid of a group $Q$  with unit element $\ideQ$ such that  $\Gamma\cap\Gamma^{-1}=\{\ideQ\}$.

The first step is to define the notion of partial isomorphism of a small category $\Cat$ inspired in \cite[Definition 3.1]{LRRW}. 

\begin{definition}\label{Def: Partial_Isomorphism}
Let $\Cat$ be a LCSC  with length function $\dmap:\Lambda\to\Gamma$. Let $\PisoG$ be the set of partial isomorphisms of $\Cat$, $f\in \PisoG$ such that satisfies: 
\begin{enumerate}
\item $f:v\Cat \rightarrow w\Cat$ is a bijection  for some  $v,w\in \Cat^0$,
\item $f(\alpha\Cat)=f(\alpha)\Cat$ for every $\alpha\in v\Cat$.
\item $f(v)=w$,
\item $\dmap(f(\alpha))=\dmap(\alpha)$ for every $\alpha\in v\Cat$.
\end{enumerate}
We will denote by $v=:d(f)$ (the \emph{domain of $f$}) and  by $w=:c(f)$ (the \emph{codomain of $f$}).
\end{definition}

\begin{remark}\label{Rem:ConditionsAction}$\mbox{ }$\vspace{.1truecm}

\begin{enumerate}
	\item Observe that condition $(3)$ does not follow from the previous conditions. 
	Indeed, given $g\in \Cat^{-1}$, the map $f:s(g)\Cat\to r(g)\Cat$ given by $f(\gamma)=g\gamma$ for every $\gamma\in s(g)\Cat$ satisfies conditions $(1)-(2)$ but not $(3)$ whenever $g\in \Cat^{-1}\setminus \Cat^0$.
	\item Given $f:v\Cat\to w\Cat$ in $\PisoG$ and $g\in v\Cat^{-1}$ we have that $f(gg^{-1})=w$ by condition $(3)$, and hence by condition $(2)$ there exists $h\in \Cat$ such that $f(g)h=w$, so $f(g)\in \Cat^{-1}$.
\end{enumerate}
\end{remark}

\begin{lemma}\label{cocycle}
	Let $\Cat$ be a LCSC  with length function $\dmap:\Lambda\to\Gamma$. Then, given $f\in \PisoG$ and  $\alpha\in d(f)\Cat$, there exists a unique function $f_{|\alpha}:s(\alpha)\Cat\to s(f(\alpha))\Cat$ in $\PisoG$ such that $f(\alpha\beta)=f(\alpha)f_{|\alpha}(\beta)$ for every $\beta\in s(\alpha)\Cat$.
\end{lemma}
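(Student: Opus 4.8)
The plan is to \emph{define} the desired function pointwise and then check that it satisfies all the requirements. Given $\beta\in s(\alpha)\Cat$ we have $\alpha\beta\in\alpha\Cat\subseteq d(f)\Cat$, so $f(\alpha\beta)$ is defined, and by condition $(2)$ of Definition \ref{Def: Partial_Isomorphism} applied to $\alpha$ we get $f(\alpha\beta)\in f(\alpha\Cat)=f(\alpha)\Cat$. Hence there is a morphism $\gamma$ with $f(\alpha\beta)=f(\alpha)\gamma$, and since $\Cat$ is left cancellative this $\gamma$ is \emph{unique}. I would set $f_{|\alpha}(\beta):=\gamma$. Composability of the product $f(\alpha)\gamma$ forces $r(\gamma)=s(f(\alpha))$, so indeed $f_{|\alpha}(\beta)\in s(f(\alpha))\Cat$ and $f_{|\alpha}\colon s(\alpha)\Cat\to s(f(\alpha))\Cat$ is well defined, with $f(\alpha\beta)=f(\alpha)f_{|\alpha}(\beta)$ holding by construction. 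The uniqueness assertion in the statement is then immediate: any other $g$ with $f(\alpha\beta)=f(\alpha)g(\beta)$ satisfies $f(\alpha)g(\beta)=f(\alpha)f_{|\alpha}(\beta)$, so $g(\beta)=f_{|\alpha}(\beta)$ by left cancellation.

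It remains to verify that $f_{|\alpha}\in\PisoG$, i.e.\ the four conditions of Definition \ref{Def: Partial_Isomorphism}. For injectivity, $f_{|\alpha}(\beta)=f_{|\alpha}(\beta')$ gives $f(\alpha\beta)=f(\alpha\beta')$, and injectivity of $f$ together with left cancellation yields $\beta=\beta'$. For surjectivity I would start from $\delta\in s(f(\alpha))\Cat$: then $f(\alpha)\delta\in f(\alpha)\Cat=f(\alpha\Cat)$, so $f(\alpha)\delta=f(\eta)$ for some $\eta\in\alpha\Cat$; writing $\eta=\alpha\beta$ with $\beta\in s(\alpha)\Cat$ gives $f(\alpha)f_{|\alpha}(\beta)=f(\alpha)\delta$, whence $f_{|\alpha}(\beta)=\delta$. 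Condition $(3)$ follows by taking $\beta=s(\alpha)$, so that $\alpha\beta=\alpha$ and $f(\alpha)=f(\alpha)f_{|\alpha}(s(\alpha))$ forces $f_{|\alpha}(s(\alpha))=s(f(\alpha))$ after cancelling $f(\alpha)$. For condition $(4)$ I combine multiplicativity of $\dmap$ with condition $(4)$ of $f$: from $\dmap(\alpha)\dmap(f_{|\alpha}(\beta))=\dmap\big(f(\alpha)f_{|\alpha}(\beta)\big)=\dmap(f(\alpha\beta))=\dmap(\alpha\beta)=\dmap(\alpha)\dmap(\beta)$ I cancel $\dmap(\alpha)$ to obtain $\dmap(f_{|\alpha}(\beta))=\dmap(\beta)$; note this cancellation is legitimate because $\Gamma$ sits inside the ambient group $Q$, where $\dmap(\alpha)$ is invertible.

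Finally, condition $(2)$ for $f_{|\alpha}$ is proved by peeling off the common prefix $f(\alpha)$ from condition $(2)$ for $f$. For the inclusion $f_{|\alpha}(\beta\Cat)\subseteq f_{|\alpha}(\beta)\Cat$, any $\beta\mu$ gives $f(\alpha)f_{|\alpha}(\beta\mu)=f(\alpha\beta\mu)\in f(\alpha\beta)\Cat=f(\alpha)f_{|\alpha}(\beta)\Cat$ by condition $(2)$ of $f$ at $\alpha\beta$, and left cancellation yields $f_{|\alpha}(\beta\mu)\in f_{|\alpha}(\beta)\Cat$. The reverse inclusion runs symmetrically: given $f_{|\alpha}(\beta)\nu$, the element $f(\alpha\beta)\nu$ lies in $f(\alpha\beta)\Cat=f(\alpha\beta\Cat)$, so it equals $f(\alpha\beta\mu)$ for some $\mu$, and cancelling $f(\alpha)$ identifies it with $f_{|\alpha}(\beta\mu)$. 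I do not expect any genuine obstacle here; the entire argument is driven by left cancellativity and condition $(2)$, and the only points demanding a little care are expressing the surjectivity preimage in the form $\alpha\beta$ and remembering that the cancellation in condition $(4)$ takes place in $Q$ rather than in $\Gamma$.
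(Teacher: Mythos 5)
Your proposal is correct and follows essentially the same route as the paper: define $f_{|\alpha}$ pointwise via condition $(2)$ of $f$ plus left cancellation, then verify bijectivity and conditions $(2)$--$(4)$ by the same cancellation arguments. The only cosmetic difference is in checking condition $(2)$: the paper derives the identity $f_{|\alpha}(\beta\delta)=f_{|\alpha}(\beta)f_{|\alpha\beta}(\delta)$ using the restriction at $\alpha\beta$ (which it reuses later in Lemma \ref{cocycle_prop}), whereas you prove the two inclusions directly by peeling off the prefix $f(\alpha)$ --- your version is, if anything, slightly more explicit about the reverse inclusion, which the paper leaves implicit via surjectivity of $f_{|\alpha\beta}$.
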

\begin{proof}
Given  $f\in \PisoG$ and $\alpha\in d(f)\Cat$  we define 
$$
\begin{array}{rrcl}
f_{|\alpha}: & s(\alpha)\Cat & \rightarrow  &  s(f(\alpha))\Cat \\
& \beta & \mapsto  &  \gamma 
\end{array}
$$
where $\gamma\in s(f(\alpha))\Lambda$ is such that $f(\alpha\beta)=f(\alpha)\gamma$. The existence of $\gamma$ is guaranteed by condition (2). Now suppose that there exist $\gamma_1,\gamma_2\in s(f(\alpha))\Cat$ such that $f(\alpha\beta)=f(\alpha)\gamma_1=f(\alpha)\gamma_1$. Then, by left cancellation, we have that $\gamma_1=\gamma_2$, so $f_{|\alpha}$ is well-defined. 

Now suppose there exist $\beta_1, \beta_2\in s(\alpha)\Lambda$ that satisfy $f(\alpha)\gamma=f(\alpha\beta_1)=f(\alpha\beta_2)$ for some $\gamma\in s(f(\alpha))\Cat$. Since $f$ is bijective, $\alpha\beta_1=\alpha\beta_2$, whence $\beta_1=\beta_2$ by left cancellation. Thus, $f_{|\alpha}$ is injective.   Finally, if $\gamma\in s(f(\alpha))\Cat$, then $f(\alpha)\gamma\in f(\alpha)\Cat=g(\alpha\Cat)$, so that there exists $\beta \in s(\alpha)\Cat$ such that $f(\alpha\beta)=f(\alpha)\gamma$. Thus, $f_{|\alpha} (\beta)=\gamma$, whence $f_{|\alpha}$ is onto, as desired.

Now let $\beta\in s(\alpha)\Cat$ and let $\delta\in s(\beta)\Cat$. Then   $f(\alpha\beta\delta)=f(\alpha\beta)f_{|\alpha\beta}(\delta)=f(\alpha)f_{|\alpha}(\beta\delta)$. On the other hand, $f(\alpha\beta)=f(\alpha)f_{|\alpha}(\beta)$, whence $f(\alpha)f_{|\alpha}(\beta\delta)=f(\alpha)f_{|\alpha}(\beta)f_{|\alpha\beta}(\delta)$. By left cancellation, $f_{|\alpha}(\beta\delta)=f_{|\alpha}(\beta)f_{|\alpha\beta}(\delta)$. Thus, $f_{|\alpha}$  satisfies condition $(2)$. Also, $f_{|\alpha}$ is such that 
$f(\alpha)=f(\alpha s(\alpha))=f(\alpha)f_{|\alpha}(s(\alpha))$, and again by left cancellation $f_{|\alpha}(s(\alpha))=s(f(\alpha))$, so condition $(3)$ is fulfilled. Finally 
$$\dmap(\alpha)\dmap(\beta)=\dmap(\alpha\beta)=\dmap(f(\alpha\beta))=\dmap(f(\alpha)f_{|\alpha}(\beta))=\dmap(f(\alpha))\dmap(f_{|\alpha}(\beta))=\dmap(\alpha)\dmap(f_{|\alpha}(\beta))\,$$
for every $\beta\in s(\alpha)\Cat$. Thus, $\dmap(\beta)=\dmap(f_{|\alpha}(\beta))$ for every  $\beta\in s(\alpha)\Cat$, so condition $(4)$ is satisfied.
\end{proof}
  
\begin{lemma}\label{cocycle_prop}
	Let $\Cat$ be a LCSC with length function $\dmap:\Lambda\to\Gamma$. Then, given $\alpha,\beta\in \Cat$ with $s(\alpha)=r(\beta)$ and $f\in \PisoG$, we have that:
	\begin{enumerate}
		\item $f_{|d(f)}=f$,
		\item $s(f(\alpha))=f_{|\alpha}(s(\alpha))=c(f_{|\alpha})$,
		\item $\text{id}_{s(\alpha)}=(\text{id}_{r(\alpha)})_{|\alpha}$,
		\item $f_{|\alpha\beta}=(f_{|\alpha})_{|\beta}$,
	\end{enumerate} 
\end{lemma}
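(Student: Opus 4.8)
The plan is to verify the four identities of Lemma~\ref{cocycle_prop} directly from the defining property of the restriction maps established in Lemma~\ref{cocycle}, namely that $f_{|\alpha}$ is the unique element of $\PisoG$ satisfying $f(\alpha\beta)=f(\alpha)f_{|\alpha}(\beta)$ for all $\beta\in s(\alpha)\Cat$. The strategy throughout is the same: to identify a given partial isomorphism with $f_{|\alpha}$ (or $(f_{|\alpha})_{|\beta}$, etc.), it suffices to check that it satisfies the characterizing factorization equation, since uniqueness then forces equality. Left cancellation will be the workhorse that lets me strip off common prefixes.

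For~(1), taking $\alpha=d(f)$ gives $f(d(f)\beta)=f(\beta)$ since $d(f)=r(\beta)$ acts as an identity, and on the other hand $f(d(f))=c(f)=r(f(\beta))$, so the defining equation reads $f(\beta)=c(f)\,f_{|d(f)}(\beta)=f_{|d(f)}(\beta)$; hence $f_{|d(f)}=f$. For~(2), I would read off $c(f_{|\alpha})=s(f(\alpha))$ directly from the codomain of the map $f_{|\alpha}\colon s(\alpha)\Cat\to s(f(\alpha))\Cat$ declared in Lemma~\ref{cocycle}, and the equality $f_{|\alpha}(s(\alpha))=s(f(\alpha))$ is exactly condition~(3) for $f_{|\alpha}$, which was verified at the end of the proof of Lemma~\ref{cocycle}. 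For~(3), applying the defining equation to $f=\text{id}_{r(\alpha)}$ gives $\alpha\beta=(\text{id}_{r(\alpha)})(\alpha\beta)=\alpha\,(\text{id}_{r(\alpha)})_{|\alpha}(\beta)$, so left cancellation yields $(\text{id}_{r(\alpha)})_{|\alpha}(\beta)=\beta=\text{id}_{s(\alpha)}(\beta)$ for all $\beta$.

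The substantive step, and the one I expect to be the main obstacle, is the cocycle identity~(4). I would compute $f(\alpha\beta\delta)$ in two ways for an arbitrary $\delta\in s(\beta)\Cat$. Grouping as $(\alpha\beta)\delta$ and applying the defining equation for $f_{|\alpha\beta}$ gives $f(\alpha\beta\delta)=f(\alpha\beta)f_{|\alpha\beta}(\delta)$. Grouping instead as $\alpha(\beta\delta)$ gives $f(\alpha\beta\delta)=f(\alpha)f_{|\alpha}(\beta\delta)$, and since $f_{|\alpha}$ satisfies condition~(2) this equals $f(\alpha)f_{|\alpha}(\beta)f_{|\alpha\beta}(\delta)$ after expanding $f_{|\alpha}(\beta\delta)=f_{|\alpha}(\beta)(f_{|\alpha})_{|\beta}(\delta)$ by the defining equation for $(f_{|\alpha})_{|\beta}$. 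Using $f(\alpha\beta)=f(\alpha)f_{|\alpha}(\beta)$ to rewrite the first expression and cancelling the common prefix $f(\alpha\beta)=f(\alpha)f_{|\alpha}(\beta)$ on the left yields $f_{|\alpha\beta}(\delta)=(f_{|\alpha})_{|\beta}(\delta)$ for every $\delta\in s(\beta)\Cat$. The only care needed is to confirm that both sides have the same domain $s(\beta)\Cat$ and codomain $s(f(\alpha\beta))\Cat=s(f(\alpha)f_{|\alpha}(\beta))\Cat$, which follows from part~(2) and the observation that $s(\alpha\beta)=s(\beta)$; with domains and codomains matching and the values agreeing, the two partial isomorphisms coincide.
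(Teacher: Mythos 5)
Your proposal is correct and follows essentially the same route as the paper: the paper dismisses (1)--(3) as immediate from the definition (which your explicit verifications confirm), and its proof of (4) is exactly your argument, computing $f(\alpha\beta\gamma)$ with both groupings $(\alpha\beta)\gamma$ and $\alpha(\beta\gamma)$ and then stripping the common prefix $f(\alpha\beta)=f(\alpha)f_{|\alpha}(\beta)$ by left cancellation.
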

\begin{proof}
$(1)-(3)$ are straightforward by the definition. To prove $(4)$, let  $\alpha,\beta,\gamma\in \Cat$ with $s(\alpha)=r(\beta)$ and $s(\beta)=r(\gamma)$, and  let $f\in \PisoG$. Then 
$$f(\alpha\beta\gamma)=f(\alpha)f_{|\alpha}(\beta\gamma)=f(\alpha)f_{|\alpha}(\beta)(f_{|\alpha})_{|\beta}(\gamma)\,,$$
and on the other hand
$$f(\alpha\beta\gamma)=f(\alpha\beta)f_{|\alpha\beta}(\gamma)=f(\alpha)f_{|\alpha}(\beta)f_{|\alpha\beta}(\gamma)\,.$$
By left cancellation of $\Cat$, we have that $(f_{|\alpha})_{|\beta}(\gamma)=f_{|\alpha\beta}(\gamma)$. Since this equality holds for every $\alpha\in s(\beta)\Cat$, it follows that $f_{|\alpha\beta}=(f_{|\alpha})_{|\beta}$.
\end{proof}
 
\vspace{.2truecm}

Next step is to show that $\PisoG$ has a natural groupoid structure.
\begin{lemma}\label{Piso_comp}
Let $\Cat$ be a LCSC with length function $\dmap:\Lambda\to\Gamma$. Then, given $f,g\in \PisoG$ such that $d(f)=c(g)$,  the composition $f\circ g\in \PisoG$.	
\end{lemma}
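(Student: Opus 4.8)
The plan is to verify directly that $f\circ g$ satisfies the four defining conditions of Definition \ref{Def: Partial_Isomorphism}, with the choices $v := d(g)$ and $w := c(f)$. First I would check that the composite is well-typed: since $d(f)=c(g)$, the range $c(g)\Cat$ of $g$ coincides with the domain $d(f)\Cat$ of $f$, so $f\circ g$ is a genuine map $d(g)\Cat\to c(f)\Cat$, and as a composition of two bijections it is itself a bijection. This establishes condition $(1)$ and fixes the domain and codomain of the composite.

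For condition $(2)$, the idea is simply to chain the two hypotheses: for $\alpha\in d(g)\Cat$ one has $(f\circ g)(\alpha\Cat)=f\bigl(g(\alpha)\Cat\bigr)=f(g(\alpha))\Cat=(f\circ g)(\alpha)\Cat$, where the first equality uses condition $(2)$ for $g$ and the second uses condition $(2)$ for $f$ applied to the element $g(\alpha)\in c(g)\Cat=d(f)\Cat$. Condition $(3)$ follows from $(f\circ g)(d(g))=f(c(g))=f(d(f))=c(f)$, invoking condition $(3)$ for $g$ and then for $f$. Finally, condition $(4)$ is obtained from $\dmap((f\circ g)(\alpha))=\dmap(g(\alpha))=\dmap(\alpha)$ for every $\alpha\in d(g)\Cat$, using condition $(4)$ for $f$ (again legitimately, since $g(\alpha)\in d(f)\Cat$) and then for $g$. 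Altogether this yields $f\circ g\in\PisoG$ with $d(f\circ g)=d(g)$ and $c(f\circ g)=c(f)$.

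Since each step is a direct substitution of one defining property after another, I do not anticipate any genuine obstacle here. The only point that requires care is the bookkeeping of domains and codomains --- specifically, the repeated use of the hypothesis $d(f)=c(g)$ to guarantee that the range of $g$ lands exactly in the domain of $f$, so that the defining conditions for $f$ may legitimately be evaluated on elements of the form $g(\alpha)$. This lemma is the composability step underlying the claim that $\PisoG$ is a groupoid, so beyond the verification above I would expect the companion facts (associativity of $\circ$ and the existence of inverses, the latter via the bijectivity in condition $(1)$) to be handled in the surrounding discussion rather than within this statement.
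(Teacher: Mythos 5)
Your proof is correct, and it handles the one nontrivial point --- condition $(2)$ --- by a genuinely more elementary route than the paper. The paper's proof also verifies the definition directly, and dismisses conditions $(1)$ and $(3)$ as immediate just as you do, but for condition $(2)$ it works element-wise: it invokes Lemma \ref{cocycle} to factor $(f\circ g)(\alpha\beta)=f(g(\alpha))\,f_{|g(\alpha)}(g_{|\alpha}(\beta))$ via the restriction maps, and then uses the surjectivity of $g_{|\alpha}$ and $f_{|g(\alpha)}$ to recover the set equality $(f\circ g)(\alpha\Cat)=(f\circ g)(\alpha)\Cat$. You instead chain the set-level equalities directly, $(f\circ g)(\alpha\Cat)=f\bigl(g(\alpha)\Cat\bigr)=f(g(\alpha))\Cat$, which is shorter, avoids Lemma \ref{cocycle} entirely, and is perfectly rigorous given the domain bookkeeping you spell out. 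What the paper's longer route buys is the identity $(f\circ g)_{|\alpha}=f_{|g(\alpha)}\circ g_{|\alpha}$, which is the multiplicativity of restriction that later reappears as the cocycle identity $\varphi(gh,\alpha)=\varphi(g,h\cdot\alpha)\varphi(h,\alpha)$ in Definition \ref{Def:GroupoidCocycle}; your argument does not produce this by-product, but the lemma as stated does not require it. A minor point in your favor: you check condition $(4)$ explicitly (correctly, via $\dmap(f(g(\alpha)))=\dmap(g(\alpha))=\dmap(\alpha)$), whereas the paper's proof silently omits it.
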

\begin{proof}
	Given $f,g\in \PisoG$ such that $d(f)=c(g)$, we have that $(f\circ g):d(g)\Cat \to c(f)\Cat$ is given by $f(g(\alpha))$ for every $\alpha\in d(g)\Cat$. Clearly, $f\circ g$ is a bijection and $(f\circ g)(d(g))=c(f)$. It remains to check that $f\circ g$ satisfies condition $(2)$. Let $\alpha\in d(g)\Cat$ and $\beta\in s(\alpha)\Cat$. Then $(f\circ g)(\alpha\beta)=f(g(\alpha\beta))=f(g(\alpha)g_{|\alpha}(\beta) )=f(g(\alpha))f_{|g(\alpha)}(g_{|\alpha}(\beta))=(f\circ g)(\alpha)f_{|g(\alpha)}(g_{|\alpha}(\beta))$. Finally, since by Lemma \ref{cocycle} the maps $g_{|\alpha}$ and $f_{|g(\alpha)}$ are surjective it follows that $(f\circ g)(\alpha\Cat)=(f\circ g)(\alpha)\Cat$
\end{proof} 

For each $v\in \Cat^0$, we define $\text{id}_v: v\Cat \rightarrow v\Cat$ to be the identity map on $v\Cat$, so that $\text{id}_v\in \PisoG$

\begin{lemma}\label{Piso_inv}
	Let $\Cat$ be a LCSC  with length function $\dmap:\Lambda\to\Gamma$, and let $f\in \PisoG$. Then  $f^{-1}\in \PisoG$.	
\end{lemma}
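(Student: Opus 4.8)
The plan is to verify directly that $f^{-1}$ satisfies the four conditions of Definition \ref{Def: Partial_Isomorphism}. Write $v=d(f)$ and $w=c(f)$, so that $f\colon v\Cat\to w\Cat$ is a bijection with $f(v)=w$. Since $f$ is a bijection, its set-theoretic inverse $f^{-1}\colon w\Cat\to v\Cat$ is again a bijection, which gives condition $(1)$ with $d(f^{-1})=w$ and $c(f^{-1})=v$; and from $f(v)=w$ one immediately reads off $f^{-1}(w)=v$, which is condition $(3)$.

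For condition $(4)$, I would fix $\gamma\in w\Cat$ and set $\alpha=f^{-1}(\gamma)\in v\Cat$, so that $f(\alpha)=\gamma$. Condition $(4)$ for $f$ then yields $\dmap(\gamma)=\dmap(f(\alpha))=\dmap(\alpha)=\dmap(f^{-1}(\gamma))$, as required.

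The only step that needs a little care is condition $(2)$, namely showing $f^{-1}(\gamma\Cat)=f^{-1}(\gamma)\Cat$ for every $\gamma\in w\Cat$. Again write $\alpha=f^{-1}(\gamma)$. Since $r(\alpha)=v$, every composite $\alpha\beta$ has range $v$, so $\alpha\Cat\subseteq v\Cat=\dom(f)$. Condition $(2)$ for $f$ gives $f(\alpha\Cat)=f(\alpha)\Cat=\gamma\Cat$, and because $f$ is injective on $v\Cat$ I may apply $f^{-1}$ to both sides of $f(\alpha\Cat)=\gamma\Cat$ to obtain $\alpha\Cat=f^{-1}(\gamma\Cat)$, that is, $f^{-1}(\gamma)\Cat=f^{-1}(\gamma\Cat)$.

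The whole argument is essentially routine, the only (minor) obstacle being precisely this last point: one must observe that $\alpha\Cat$ lies inside the domain $v\Cat$, so that injectivity of $f$ legitimately inverts the image relation $f(\alpha\Cat)=\gamma\Cat$. With the four conditions checked, $f^{-1}\in\PisoG$.
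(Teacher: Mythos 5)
Your proof is correct and takes essentially the same route as the paper: both reduce condition $(2)$ to condition $(2)$ for $f$ applied at $\alpha=f^{-1}(\gamma)$ and then invert the identity $f(\alpha\Cat)=\gamma\Cat$ using bijectivity, the paper doing this by an element-wise chase and you directly at the level of sets. Yours is in fact marginally more complete, since the paper treats conditions $(1)$ and $(3)$ as clear and never explicitly verifies the length condition $(4)$, which you do check.
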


\begin{proof}
	If $f\in \text{PIso}(\Cat)$, then $f^{-1}:c(f)\Cat \to d(f)\Cat$ is the unique function such that $(f^{-1}\circ f)=\text{id}_{d(f)}$ and $(f\circ f^{-1})=\text{id}_{c(f)}$. Clearly, $f^{-1}$ satisfies conditions $(1)$ and $(3)$. Let $\alpha\in c(f)\Cat$, and $\beta\in s(\alpha)\Cat$. Then $f^{-1}(\alpha\beta)=f^{-1}(f(f^{-1}(\alpha))\beta)$, but since $f(f^{-1}(\alpha)\Cat)=f(f^{-1}(\alpha))\Cat=\alpha\Cat$ and $f$ is bijective, it follows that there exists a unique $\gamma\in s(f^{-1}(\alpha))\Cat$ such that $\alpha\beta=f(f^{-1}(\alpha))\beta=f(f^{-1}(\alpha)\gamma)$. Therefore, $f^{-1}(\alpha\beta)=f^{-1}(f(f^{-1}(\alpha)\gamma))=f^{-1}(\alpha)\gamma$. Moreover since for every $\gamma\in s(f^{-1}(\alpha))\Cat$ there exists $\beta\in s(\alpha)\Cat$ such that $\alpha\beta=f(f^{-1}(\alpha)\gamma)$, it follows that $f^{-1}(\alpha\Cat)=f^{-1}(\alpha)\Cat$.
 \end{proof}

Now, using Lemmas \ref{Piso_comp} \& \ref{Piso_inv}, we can define a groupoid structure in $\PisoG$.
 
 \begin{definition}\label{proposition:PIso-groupoid}
 	If $\Cat$ is  a LCSC with length function $\dmap:\Lambda\to\Gamma$, then $\PisoG$ is a discrete groupoid, where given $f,g\in \PisoG$ the product $fg$ is defined as the  composition $f\circ g$ whenever $d(f)=c(g)$, and $f^{-1}$ is the set-theoretical inverse of $f$.  Moreover, we can identify unit space of $(\PisoG)^{(0)}=\{\text{id}_v:v\in \Cat^0 \}$ with $\Cat^0$.
 \end{definition}

With this in mind, we can define the notion of action of a groupoid on a LCSC that we need.

\begin{definition}\label{definition_GroupoidAction}
Let $\Lambda$ be a LCSC with length function $\dmap:\Lambda\to\Gamma$, and let $\Grpd$ be a discrete groupoid. An action of $\Grpd$ on $\Cat$ is a groupoid homomorphism $\phi: \Grpd  \rightarrow   \PisoG$. Given $g\in \Grpd$ and $\alpha\in d(\phi(g))\Cat$, we write the action of $g$ on $\alpha$ by $g\cdot \alpha:=\phi(g)(\alpha)$.
\end{definition}
Using the identification $(\PisoG)^{(0)}=\Cat^0$, so that $\phi(\Grpdu)\subseteq\Cat^0$, and that $\phi(s(g))=d(\phi(g))$ and $\phi(r(g))=c(\phi(g))$ for any $g\in \Grpd$, we have that  $$\phi(g): \phi(s(g))\Cat \rightarrow \phi(r(g))\Cat\,.$$ \vspace{.2truecm}

\begin{remas}\label{Rem:2.9} Let $\Cat$ be a LCSC  with length function $\dmap:\Lambda\to\Gamma$, and let $\phi:\Grpd\to \PisoG$ be a groupoid action on $\Cat$. Then:
\begin{enumerate} 
	\item  Suppose that $\Delta^0:=\phi(\Grpdu)\subsetneqq \Cat^0$. Let us define  
	$$\Delta:=\{\alpha\in \Cat; s(\alpha),r(\alpha)\in \Delta^0 \}\,.$$ 
	Then $\Delta$ is a LCSC, and $\phi(\Grpd)\subseteq \PisoG$.
	However $\Delta$ is not necessarily finitely aligned if so is $\Cat$.  Thus, such a restriction will affect the arguments. Hence, we will always assume that, given an action of $\Grpd$ on $\Cat$, either $\phi(\Grpdu)=\Cat^0$, or that the restricted subcategory $\Delta$ inherits the finitely aligned property. In practice, that means that, after restricting if necessary, we will assume that $\phi(\Grpdu)= \Cat^0$.
	
	\item  Suppose that $\phi_{|\Grpdu}$ is not an injective map. Then it can happen that there exists $(g,h)\notin \Grpdc$, that is $s(g) \neq r(h)$, but $(\phi(g),\phi(h))\in (\PisoG)^{(2)}$, that is $\phi(s(g))=d(\phi(g))=c(\phi(h))=\phi(r(h))$, and hence $\phi(g)\phi(h)\in \PisoG\setminus \phi(\Grpd)$. This will affect defining composability of elements in the corresponding Zappa-Sz\'ep product. Since our model should include non-faithful self-similar actions of a groupoid $\Grpd$ on $\Cat$, we cannot skip that case. Then, for a non-injective action, we will need to include the condition that for every $g,h\in \Grpd$, then $s(g)=r(h)$ if and only if $d(\phi(g))=\phi(s(g))=\phi(r(h))=c(\phi(h))$, or equivalently, that $\phi_{|\Grpdu}$ is injective. 
	\end{enumerate}
\end{remas}

According to Remarks \ref{Rem:2.9}, we will assume  during the whole paper that an action of a discrete groupoid $\Grpd$ on a small category $\Cat$ through a (not necessarily injective) groupoid homomorphism $\phi: \Grpd\rightarrow \PisoG$, satisfies that $\phi_{|\Grpdu}$ is a bijection. Therefore we will identify $\Grpdu=\Cat^{(0)}=\PisoG^{(0)}$ omitting  $\phi$.

\section{Zappa-Sz\'ep products for groupoid actions}

In this section, we will define the Zappa-Sz\'ep product of a left cancellative small category categories.
This is inspired in the construction of the Zappa-Sz\'ep product of a groupoid on a finite graph \cite[Section 3]{LRRW} and the Zappa-Sz\'ep product of a group acting on a left cancellative small category \cite{BCFS,OP_LCSC}. This has also been recently done in \cite{LV2} where they construct Zappa-Sz\'ep products of groupoids acting on higher-rank graphs. 

First, we will state the abstract notion of self-similar action, in order to boil up the exact definition of 1-cocycle we will need.

\begin{definition}\label{Def:SelfSimilarAction}
Let $\Grpd$ be a discrete groupoid acting on a LCSC $\Cat$ with length function $\dmap:\Lambda\to\Gamma$. We say that the action is \emph{self-similar} if for every $g\in \Grpd$ and for every $\alpha\in s(g)\Cat$ there exists $h\in \Grpd$ such that
$$g\cdot (\alpha\mu)=(g\cdot \alpha)(h\cdot\mu)$$
for every $\mu\in s(\alpha)\Cat$.
\end{definition}

Clearly, in the above definition $h$ depends on $g$ and $\alpha$. A natural question is to decide whether such an $h$ is unique. Suppose that $h_1, h_2\in \Grpd$ satisfy $g\cdot (\alpha\mu)=(g\cdot \alpha)(h_i\cdot \mu)$  ($i=1,2)$ for every $\mu\in s(\alpha)\Cat$. Then
$$(g\cdot \alpha)(h_1\cdot \mu) =(g\cdot \alpha)(h_2\cdot \mu) .$$
Since  $\Lambda$ is left cancellative, then 
$$(h_1\cdot \mu)=(h_2\cdot \mu)$$
for every $\mu\in s(\alpha)\Lambda$. 
Hence,  we can only guarantee that $\phi(h_1)=\phi(h_2)$, and $h_1=h_2$ only holds when $\phi$ is injective.
\vspace{.2truecm}
\ 
We will use this fact to define a suitable notion of cocycle for such an action. We will essentially follow \cite[Section 4]{BKQS}, taking care of the fact that the action is partial.

\begin{definition}\label{Def:GroupoidCocycle}
Let $\Cat$ be a LCSC with length function $\dmap:\Lambda\to\Gamma$, and let $\Grpd$ be a discrete groupoid acting on $\Cat$. Consider the set
$$\Grpd{}_s\times_r\Cat:=\{ (g,\alpha)\in \Grpd\times \Cat : s(g)=r(\alpha)\}\,.$$
A \emph{(partial) cocycle of the action of $\Grpd$ on $\Cat$} is a function $\varphi :  \Grpd{}_s\times_r\Cat \rightarrow  \Grpd$ satisfying the cocycle identity $$\varphi (gh,\alpha)=\varphi(g, h\cdot\alpha)\varphi(h,\alpha)$$ for any $(g,h)\in \Grpdc$ 
 and any $\alpha \in \Cat$ such that $s(h)=r(\alpha)$.
 
 Observe that in particular $\varphi(r(\alpha),\alpha)\in \Grpdu$ for every $\alpha\in \Cat$.
\end{definition}

Now, we state the properties that a cocycle will enjoy (in a similar list as that of \cite[Section 7]{OP_LCSC}). But instead of impose them, we will try to deduce from the definition, as in \cite[Section 3]{LRRW}.

First step is to fix the requirement to guarantee that the action of $\Grpd$ is compatible with the composition in $\Cat$. 
We will introduce minimal requirements to guarantee this fact when constructing our ``self-similar'' actions, and also that we can associate a suitable small category to the action and the cocycle.

\begin{definition}\label{Def:CategoryCocycle}
Let $\Cat$ be a LCSC with length function $\dmap:\Lambda\to\Gamma$, and let $\Grpd$ be a discrete groupoid acting on $\Cat$. A (partial) cocycle $\varphi$ for the action of $\Grpd$ on $\Cat$ is said to be a \emph{category cocycle} if for every $g\in \Grpd$, $\alpha\in\Cat$ with $s(g)=r(\alpha)$ and every $\beta\in s(\alpha)\Cat$ we have that:
\begin{enumerate}
\item $\varphi(g,d(\phi(g)))=g$,	
\item $s(g\cdot \alpha)=\varphi(g,\alpha)\cdot s(\alpha)=r(\varphi(g,\alpha))$,
\item $s(\alpha)=\varphi(r(\alpha),\alpha)$,
\item $\varphi(g,\alpha\beta)=\varphi(\varphi(g,\alpha), \beta)$,
\item $g\cdot (\alpha\beta)=(g\cdot \alpha)(\varphi(g,\alpha)\cdot \beta)$.
\end{enumerate}
If we have an action of $\Grpd$ on $\Cat$, and $\varphi$ is a category cocycle for this action, we will say that $(\Cat,\dmap, \Grpd, \varphi)$ is a \emph{category system}.
\end{definition}

\begin{remark}
	Let $\Cat$ be a LCSC with length function $\dmap:\Lambda\to\Gamma$, let $\Grpd$ be a discrete groupoid acting on $\Cat$, and let $(\Cat,\dmap, \Grpd, \varphi)$ be a category system. Then, by condition $(5)$ in Definition \ref{Def:CategoryCocycle},
the action of $\Grpd$ on $\Cat$ is self-similar. Moreover, 	
	given $g\in \Grpd$ and $\alpha\in s(g)\Cat$, we have that $\phi(\varphi(g,\alpha))=\phi(g)_{|\alpha}$. Indeed, if $\beta\in s(\alpha)\Cat$ then $\phi(g)(\alpha\beta)=\phi(g)(\alpha)\phi(g)_{|\alpha}(\beta)$. On the other hand  $\phi(g)(\alpha\beta)=g\cdot(\alpha\beta)=(g\cdot \alpha)(\varphi(g,\alpha)\cdot \beta)=\phi(g)(\alpha)(\phi(\varphi(g,\alpha))(\beta))$. By left cancellation of $\Cat$ it follows that $\phi_{|\alpha}(\beta)=\phi(\varphi(g,\alpha))(\beta)$, and hence $$\phi(g)_{|\alpha}=\phi(\varphi(g,\alpha))\,.$$
	
\end{remark}
\begin{example}
	If $\Cat$ is a LCSC with length function $\dmap:\Lambda\to\Gamma$, then $\varphi:\PisoG {}_d\times_r \Cat\to \PisoG$ defined by $\varphi(f,\alpha)=f_{|\alpha}$ is a category cocyle (Lemmas \ref{cocycle} \& \ref{cocycle_prop}), so $(\Cat,\dmap,\PisoG,\varphi)$ is a category system. 
\end{example}
The property of being a category cocycle is the correct version of \cite[(2.3)]{EP2}, as we will see. Let us determine which are the basic properties satisfied by a category cocycle. These properties, joint with Definition \ref{Def:CategoryCocycle}, are analog to those proved in \cite[Lemma 3.4 \& Proposition 3.6]{LRRW}.

\begin{proposition}\label{Prop:left cancellative small category}
Let $\Cat$ be a LCSC with length function $\dmap:\Lambda\to\Gamma$, let $\Grpd$ be a  discrete groupoid acting on $\Cat$, and let $(\Cat,\dmap, \Grpd, \varphi)$ a category system. Then, for every $g\in \Grpd$, every $\alpha, \beta\in \Cat$ with $s(g)=r(\alpha)$, $s(\alpha)=r(\beta)$, we have that:
\begin{enumerate}
\item $r(g\cdot \alpha)=g\cdot r(\alpha)$.
\item $s(\varphi(g,\alpha))=s(\alpha)$.
\item $\varphi(g,\alpha)^{-1}=\varphi(g^{-1}, g\cdot \alpha)$.
\end{enumerate}
\end{proposition}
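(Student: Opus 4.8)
The plan is to verify the three identities in turn, drawing on the category-cocycle axioms of Definition \ref{Def:CategoryCocycle}, the partial-isomorphism axioms of Definition \ref{Def: Partial_Isomorphism}, the cocycle identity of Definition \ref{Def:GroupoidCocycle}, and the fact that $\Grpd$, being a groupoid, is cancellative. Parts $(1)$ and $(2)$ are essentially unwinding definitions; part $(3)$ is the one with genuine content, and it relies on $(1)$ and $(2)$ to license a cancellation.

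For $(1)$ I would argue directly at the level of the action. Since $s(g)=r(\alpha)$, under the identification $\Grpdu=\Cat^0=\PisoG^{(0)}$ we have $\phi(g)\colon s(g)\Cat\to r(g)\Cat$, so $g\cdot\alpha=\phi(g)(\alpha)\in r(g)\Cat$ and hence $r(g\cdot\alpha)=r(g)$. On the other hand $g\cdot r(\alpha)=\phi(g)(s(g))=\phi(g)(d(\phi(g)))=c(\phi(g))=r(g)$ by condition $(3)$ of Definition \ref{Def: Partial_Isomorphism}. Thus both sides equal $r(g)$.

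For $(2)$ the pivotal observation is that feeding a unit into the cocycle identity forces the composability constraint to read off exactly the source we want. Apply the cocycle identity of Definition \ref{Def:GroupoidCocycle} to the pair $(g,r(\alpha))\in\Grpdc$ and $\alpha$: since $g\,r(\alpha)=g$ and $r(\alpha)\cdot\alpha=\alpha$, it becomes $\varphi(g,\alpha)=\varphi(g,\alpha)\,\varphi(r(\alpha),\alpha)$. By condition $(3)$ of Definition \ref{Def:CategoryCocycle}, $\varphi(r(\alpha),\alpha)=s(\alpha)$, a unit with range $s(\alpha)$; for the displayed product to be defined in $\Grpd$ one needs $s(\varphi(g,\alpha))=r(s(\alpha))=s(\alpha)$, which is the claim. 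Alternatively, the remark following Definition \ref{Def:CategoryCocycle} gives $\phi(\varphi(g,\alpha))=\phi(g)_{|\alpha}$, whose domain is $s(\alpha)$ by Lemma \ref{cocycle}; since $\phi_{|\Grpdu}$ is injective this again yields $s(\varphi(g,\alpha))=s(\alpha)$.

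For $(3)$ I would apply the cocycle identity to $(g^{-1},g)\in\Grpdc$ and $\alpha$, obtaining
$$\varphi(g^{-1}g,\alpha)=\varphi(g^{-1},g\cdot\alpha)\,\varphi(g,\alpha).$$
Because $g^{-1}g=s(g)=r(\alpha)$, the left-hand side equals $\varphi(r(\alpha),\alpha)=s(\alpha)$ by condition $(3)$ of Definition \ref{Def:CategoryCocycle}. By part $(2)$, $s(\alpha)=s(\varphi(g,\alpha))=\varphi(g,\alpha)^{-1}\varphi(g,\alpha)$, so $\varphi(g^{-1},g\cdot\alpha)\,\varphi(g,\alpha)=\varphi(g,\alpha)^{-1}\varphi(g,\alpha)$, and right-cancelling $\varphi(g,\alpha)$ in $\Grpd$ gives $\varphi(g^{-1},g\cdot\alpha)=\varphi(g,\alpha)^{-1}$. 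The main obstacle is the groupoid source/range bookkeeping that guarantees every product written is composable: here one checks $s(g^{-1})=r(g)=r(g\cdot\alpha)$ via part $(1)$ (so $(g^{-1},g\cdot\alpha)$ lies in the domain of $\varphi$), and combines condition $(2)$ of Definition \ref{Def:CategoryCocycle} with part $(2)$ to see $s(\varphi(g^{-1},g\cdot\alpha))=s(g\cdot\alpha)=r(\varphi(g,\alpha))$, so the cancellation is legitimate. This bookkeeping is routine once $(1)$ and $(2)$ are established, with $(2)$ being the step that makes the argument go through.
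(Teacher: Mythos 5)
Your proposal is correct and follows essentially the same route as the paper: all three parts are obtained by reading off the category-cocycle axioms, with part (3) proved exactly as in the paper via the cocycle identity applied to $(g^{-1},g)$ together with $\varphi(r(\alpha),\alpha)=s(\alpha)$. The only (immaterial) variation is in part (2), where the paper extracts $s(\varphi(g,\alpha))=s(\alpha)$ directly from the implicit composability in Definition \ref{Def:CategoryCocycle}(2), whereas you derive it from the cocycle identity with $h=r(\alpha)$; both are one-line unwindings of the definitions, and your extra source/range bookkeeping in (3) is sound.
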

\begin{proof}
Let $g\in \Grpd$, let $\alpha, \beta\in \Cat$ with $s(g)=r(\alpha)$, $s(\alpha)=r(\beta)$. 

(1) Since $r(\alpha)=s(g)$ and $g\cdot s(g)=r(g)$, we have that $r(g\cdot \alpha)=r(g)=g\cdot s(g)=g\cdot r(\alpha)$.

(2) By Definition \ref{Def:CategoryCocycle}(2), $\varphi(g,\alpha)\cdot s(\alpha)=r(\varphi(g,\alpha))$. Thus, $s(\varphi(g,\alpha))=s(\alpha)$.

(3) By the cocycle condition and Definition \ref{Def:CategoryCocycle}(3)   $$s(\alpha)=\varphi(g^{-1}g,\alpha)=\varphi(g^{-1},g\cdot \alpha)\varphi(g,\alpha)\,,$$ whence $\varphi(g,\alpha)^{-1}=\varphi(g^{-1}, g\cdot \alpha)$.

\end{proof}

Now, we will fix a definition of Zappa-Sz\'ep product for a category system $(\Cat,\Grpd,\varphi)$, similar to that in \cite[Section 4]{BKQS}, using the strategy introduced in \cite{BPRRW}. 

\begin{definition}\label{definition_zappa-szed2}
Let $\Cat$ be a LCSC with length function $\dmap:\Lambda\to\Gamma$, let $\Grpd$ be a  discrete groupoid acting on $\Cat$, and let $(\Cat,\dmap,\Grpd,\varphi)$ be a category system. We denote by $\Cat \Join^\varphi \Grpd$ the set
$$\Cat \Join^\varphi \Grpd:=\Cat{}_s\times_r\Grpd=\{ (\alpha, g)\in \Cat \times \Grpd : s(\alpha)=r(g)\}\,,$$
with distinguished elements
$$(\Cat \Join^\varphi \Grpd)^{(0)}:= \Cat^0 \Join^\varphi \Grpdu\,.$$
We equip this pair of sets with range and source maps $r,s:\Lambda \Join^\varphi G \rightarrow (\Lambda \Join^\varphi G)^{(0)}$ defined by 
$$r(\alpha, g):=(r(\alpha), r(\alpha))\qquad \text{and}\qquad s(\alpha, g):=(s(g),s(g))=(g^{-1}\cdot s(\alpha), s(g))\,,$$
for every $(\alpha, g)\in \Cat \Join^\varphi \Grpd$. Observe that the later equality holds because $g\cdot s(g)=r(g)=s(\alpha)$, whence $g^{-1}\cdot s(\alpha)=g^{-1}\cdot r(g)=s(g)$.

Moreover, given $(\alpha, g), (\beta, h)\in \Cat \Join^\varphi \Grpd$ with $s(\alpha, g)=r(\beta, h)$, we define the composition of two elements as follows:
$$(\alpha, g)(\beta, h):=(\alpha(g\cdot \beta), \varphi(g,\beta)h).$$
\end{definition}

We now adapt the arguments of \cite[Proposition 4.6]{BKQS} and \cite[Proposition 4.13]{BKQS} to prove the next results. 

\begin{proposition}
	Let $\Cat$ be a LCSC with length function $\dmap:\Lambda\to\Gamma$, let $\Grpd$ be a discrete groupoid acting on $\Cat$, and let $(\Cat,\dmap,\Grpd,\varphi)$ be a category system. If   given $(\alpha, g), (\beta, h)\in \Cat \Join^\varphi \Grpd$ with $s(\alpha, g)=r(\beta, h)$, we define the composition of these  two elements by
	$$(\alpha, g)(\beta, h):=(\alpha(g\cdot \beta), \varphi(g,\beta)h)\,,$$
	then $\Cat \Join^\varphi \Grpd$ is a small category.
\end{proposition}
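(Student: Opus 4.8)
The plan is to verify the small-category axioms for $\Cat \Join^\varphi \Grpd$ directly: that the composition is well-defined (lands in the set and respects source/range), that it is associative, and that the distinguished elements act as identities. First I would check \emph{well-definedness}. Given $(\alpha,g),(\beta,h)$ with $s(\alpha,g)=r(\beta,h)$, i.e. $s(g)=r(\beta)$, the first coordinate $\alpha(g\cdot\beta)$ makes sense because $g\cdot\beta=\phi(g)(\beta)$ is defined (as $\beta\in s(g)\Cat=d(\phi(g))\Cat$) and $r(g\cdot\beta)=g\cdot r(\beta)=g\cdot s(g)=r(g)=s(\alpha)$ by Proposition \ref{Prop:left cancellative small category}(1), so $\alpha$ and $g\cdot\beta$ are composable. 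The second coordinate $\varphi(g,\beta)h$ makes sense because $\varphi(g,\beta)\in\Grpd$ has $s(\varphi(g,\beta))=s(\beta)=r(h)$ by Proposition \ref{Prop:left cancellative small category}(2) together with $s(\beta)=r(h)$, so $(\varphi(g,\beta),h)\in\Grpdc$. Finally I must confirm the pair $(\alpha(g\cdot\beta),\varphi(g,\beta)h)$ lies in $\Cat \Join^\varphi \Grpd$, i.e. $s(\alpha(g\cdot\beta))=r(\varphi(g,\beta)h)$: the left side is $s(g\cdot\beta)$, which by Definition \ref{Def:CategoryCocycle}(2) equals $r(\varphi(g,\beta))=r(\varphi(g,\beta)h)$, as required.

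Next I would compute the source and range of the product and check they agree with $r(\alpha,g)$ and $s(\beta,h)$ respectively. We have $r\big((\alpha,g)(\beta,h)\big)=(r(\alpha(g\cdot\beta)),r(\alpha(g\cdot\beta)))=(r(\alpha),r(\alpha))=r(\alpha,g)$, since $\alpha$ is the initial factor. For the source, $s\big((\alpha,g)(\beta,h)\big)=(s(\varphi(g,\beta)h),s(\varphi(g,\beta)h))=(s(h),s(h))=s(\beta,h)$ because $s(\varphi(g,\beta)h)=s(h)$ in the groupoid $\Grpd$. I would then verify the \emph{identity laws}: for $(\alpha,g)$ one checks that the left identity $(r(\alpha),r(\alpha))$ and right identity $(s(g),s(g))$ satisfy $(r(\alpha),r(\alpha))(\alpha,g)=(\alpha,g)$ and $(\alpha,g)(s(g),s(g))=(\alpha,g)$; these reduce via Definition \ref{Def:CategoryCocycle}(1) and (3) (using $\varphi(r(\alpha),\alpha)=s(\alpha)$ and the normalization $\varphi(g,d(\phi(g)))=g$) together with $r(\alpha)\cdot\alpha=\alpha$ and $g\cdot s(g)=r(g)$.

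The main work, and the step I expect to be the main obstacle, is \emph{associativity}. I would take three composable elements $(\alpha,g),(\beta,h),(\gamma,k)$ and expand both $\big((\alpha,g)(\beta,h)\big)(\gamma,k)$ and $(\alpha,g)\big((\beta,h)(\gamma,k)\big)$ coordinatewise. The two first coordinates will match once I apply the self-similarity identity Definition \ref{Def:CategoryCocycle}(5) to rewrite $g\cdot(\beta(h\cdot\gamma))=(g\cdot\beta)\big(\varphi(g,\beta)\cdot(h\cdot\gamma)\big)$ and use that $\phi$ is a homomorphism so $\varphi(g,\beta)\cdot(h\cdot\gamma)=(\varphi(g,\beta)h)\cdot\gamma$; left cancellation in $\Cat$ is available if needed but the two expressions should coincide on the nose. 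The two second coordinates will match precisely because of the cocycle identity in Definition \ref{Def:GroupoidCocycle} and its consequence Definition \ref{Def:CategoryCocycle}(4): one computes $\varphi(g,\beta(h\cdot\gamma))=\varphi(\varphi(g,\beta),h\cdot\gamma)$ and then uses the cocycle identity $\varphi(\varphi(g,\beta)h,\gamma)=\varphi(\varphi(g,\beta),h\cdot\gamma)\varphi(h,\gamma)$ to show both sides equal $\varphi(g,\beta)\varphi(h,\gamma)k$ after regrouping. The delicate point is keeping the bookkeeping of which cocycle arguments are acted upon consistent; the care needed is exactly that the action is \emph{partial}, so at each step I would confirm the relevant source/range conditions ($s(g)=r(\beta)$, $s(h)=r(\gamma)$, etc.) are preserved so that every $g\cdot(-)$ and every $\varphi(-,-)$ appearing is actually defined.
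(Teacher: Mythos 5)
Your proposal is correct and follows essentially the same route as the paper's own proof: check well-definedness of the product using Definition \ref{Def:CategoryCocycle}(2) and Proposition \ref{Prop:left cancellative small category}(2), verify range/source compatibility, prove associativity by combining the cocycle identity with Definition \ref{Def:CategoryCocycle}(4)--(5), and establish the identity laws from Definition \ref{Def:CategoryCocycle}(1) and (3). One small slip worth fixing: in the associativity check the common value of the second coordinates is $\varphi(\varphi(g,\beta),h\cdot\gamma)\varphi(h,\gamma)k=\varphi(g,\beta(h\cdot\gamma))\varphi(h,\gamma)k$ rather than $\varphi(g,\beta)\varphi(h,\gamma)k$, but the two identities you invoke produce exactly this value on both sides, so the argument goes through unchanged.
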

\begin{proof}
	First observe that, given $(\alpha, g), (\beta, h)\in \Cat \Join^\varphi \Grpd$ with $s(\alpha, g)=r(\beta, h)$,
	$$r(\beta)=g^{-1}\cdot s(\alpha)\qquad \text{and}\qquad r(\beta)=s(g)\,.$$
	Whence $g\cdot \beta$ is well-defined, and since $r(\beta)=g^{-1}\cdot s(\alpha)$ so does $\alpha(g\cdot \beta)$.

	On the other hand we have that $$s(\alpha(g\cdot \beta))=s(g\cdot \beta)=r(\varphi(g,\beta))=r(\varphi(g,\beta)h)$$ and 
	$$s(\varphi(g,\beta))=s(\varphi(g,\beta))=s(\beta)=r(h)\,.$$ 
	Then, the product 
	$$(\alpha, g)(\beta, h):=(\alpha(g\cdot \beta), \varphi(g,\beta)h)\,.$$
	is well-defined.

	Now, observe that 
	\begin{align*}
	r((\alpha,g)(\beta,h)) & = r((\alpha(g\cdot \beta), \varphi(g,\beta)h))=(r(\alpha(g\cdot \beta)),r(\alpha(g\cdot \beta))) \\& = (r(\alpha),r(\alpha ))= r(\alpha,g)\,,
	\end{align*}
	and 
		\begin{align*}
	s((\alpha,g)(\beta,h)) & = s((\alpha(g\cdot \beta), \varphi(g,\beta)h))=(s(\varphi(g,\beta)h), s(\varphi(g,\beta)h)) \\& = (s(h), s(h))= s(\beta,h)\,.
	\end{align*}
	Also, given $(\gamma,f)\in \Cat \Join^\varphi \Grpd$, we have that 
	\begin{align*}
	((\alpha,g)(\beta,h))(\gamma,f) & = (\alpha(g\cdot \beta), \varphi(g,\beta)h)(\gamma,f) \\ &  = (\alpha(g\cdot \beta)(\varphi(g,\beta)h\cdot \gamma), \varphi(\varphi(g,\beta)h,\gamma)f) \\
	 & =(\alpha(g\cdot (\beta(h\cdot \gamma)), \varphi(\varphi(g,\beta)h,\gamma)f) \\
	 	 & =(\alpha(g\cdot (\beta(h\cdot \gamma)), \varphi(\varphi(g,\beta),h\cdot \gamma)\varphi(h, \gamma)f) \\
	 	 	 	 & = ( \alpha(g\cdot (\beta(h\cdot \gamma)),\varphi(g,\beta (h\cdot \gamma))\varphi(h, \gamma)f ) \\
	 	 & = (\alpha,g)(\beta(h\cdot \gamma),\varphi(h, \gamma)f ) \\
	 	  & = (\alpha,g)((\beta, h)(\gamma,f))\,.
	\end{align*}
	
Now, let $(u,u)\in \Cat^0 \Join^\varphi\Grpdu$, so $u\in \Cat^0= \Grpdu$. Then, 
	$$r(u,u)=(u,u)\qquad\text{and}\qquad s(u,u)=(u,u)\,.$$
	Finally, 
	\begin{align*}
	r(\alpha,g)(\alpha,g) = & (r(\alpha),r(\alpha))(\alpha,g)= (r(\alpha)(r(\alpha)\cdot \alpha), \varphi(r(\alpha),\alpha)g) =(\alpha,g)\,,
	\end{align*}
	and
	\begin{align*}
	(\alpha,g)s(\alpha,g) =& (\alpha,g)(s(g),s(g))=(\alpha(g\cdot s(g)), \varphi(g,s(g))s(g))  \\ 
	=&  (\alpha r(g), gs(g)) =(\alpha,g)\,,
	\end{align*}
	So we are done
\end{proof}

\begin{lemma}\label{lemma_inv}
	Let $\Cat$ be a LCSC with length function $\dmap:\Lambda\to\Gamma$, let $\Grpd$ be a discrete groupoid acting on $\Cat$, and let $(\Cat, \dmap,\Grpd,\varphi)$ be a category system. Then, $(\Cat \Join^\varphi \Grpd)^{-1}=\Cat^{-1}{}_s\times_r\Grpd$.
\end{lemma}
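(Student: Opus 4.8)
The plan is to prove the set equality $(\Cat \Join^\varphi \Grpd)^{-1}=\Cat^{-1}{}_s\times_r\Grpd$ by establishing the two inclusions separately, recalling that $(\Cat \Join^\varphi \Grpd)^{-1}$ consists precisely of those $(\alpha,g)$ admitting a right inverse in the small category $\Cat \Join^\varphi \Grpd$. The inclusion $\subseteq$ is immediate once the composition formula is unwound: if $(\alpha,g)$ is right invertible, there is $(\beta,h)$ with $(\alpha,g)(\beta,h)=s(\beta,h)=(s(h),s(h))$, and reading off the first coordinate gives $\alpha(g\cdot\beta)=s(h)$. Since the left-hand side has range $r(\alpha)$ while the right-hand side is a unit, necessarily $s(h)=r(\alpha)$, so $\alpha(g\cdot\beta)=r(\alpha)$. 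By the very definition of $\Cat^{-1}$ this exhibits $g\cdot\beta\in\Cat$ as a right inverse of $\alpha$, whence $\alpha\in\Cat^{-1}$; and $s(\alpha)=r(g)$ holds because $(\alpha,g)\in\Cat \Join^\varphi \Grpd$. Thus $(\alpha,g)\in \Cat^{-1}{}_s\times_r\Grpd$.

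The substantive direction is $\supseteq$: given $\alpha\in\Cat^{-1}$ and $g\in\Grpd$ with $s(\alpha)=r(g)$, I must show that $(\alpha,g)$ is invertible. My preferred route is to factor
$$(\alpha,g)=(\alpha,s(\alpha))\,(r(g),g),$$
which is legitimate because $s(\alpha)=r(g)$ makes the two factors composable, and because $s(\alpha)\cdot r(g)=r(g)$ together with $\varphi(s(\alpha),r(g))=\varphi(r(g),r(g))=r(g)$ (Definition \ref{Def:CategoryCocycle}(3)) recover $(\alpha,g)$ from the composition formula. Since a product of composable invertible elements is invertible, it then suffices to invert each factor separately: I claim $(\alpha,s(\alpha))^{-1}=(\alpha^{-1},r(\alpha))$ and $(r(g),g)^{-1}=(s(g),g^{-1})$. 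Multiplying out $(s(g),g^{-1})(\alpha^{-1},r(\alpha))$ yields the explicit inverse $(g^{-1}\cdot\alpha^{-1},\varphi(g^{-1},\alpha^{-1}))$ of $(\alpha,g)$, which one may alternatively verify directly without the factorization.

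The verifications reduce to bookkeeping with the category-cocycle axioms and the groupoid structure of the action. For $(r(g),g)^{-1}=(s(g),g^{-1})$ I use that $\phi$ is a groupoid homomorphism, so $g\cdot s(g)=\phi(g)(d(\phi(g)))=c(\phi(g))=r(g)$ by Definition \ref{Def: Partial_Isomorphism}(3), together with $\varphi(g,s(g))=g$ and $\varphi(g^{-1},r(g))=g^{-1}$ from Definition \ref{Def:CategoryCocycle}(1); for $(\alpha,s(\alpha))^{-1}=(\alpha^{-1},r(\alpha))$ I use that $s(\alpha)$ acts as the identity and that $\varphi(r(\alpha^{-1}),\alpha^{-1})=s(\alpha^{-1})$ by Definition \ref{Def:CategoryCocycle}(3), invoking Proposition \ref{Prop:left cancellative small category}(1)--(2) to track the relevant range and source objects. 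The main obstacle is purely organizational: at each step one must check that the partial action is actually defined—in particular that $\alpha^{-1}\in r(g)\Cat=d(\phi(g^{-1}))\Cat$ so that $g^{-1}\cdot\alpha^{-1}$ makes sense, which follows from $r(\alpha^{-1})=s(\alpha)=r(g)=s(g^{-1})$—and that the second coordinates compose correctly under the cocycle identity. No genuine difficulty arises beyond this careful tracking of domains and ranges.
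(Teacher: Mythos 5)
Your proof is correct and takes essentially the same approach as the paper: the forward inclusion is argued identically (reading off the first coordinate of $(\alpha,g)(\beta,h)=$ a unit to get $\alpha(g\cdot\beta)=r(\alpha)$), and for the reverse inclusion your factorization $(\alpha,g)=(\alpha,s(\alpha))(r(g),g)$ merely reorganizes the construction of the very same explicit inverse $(g^{-1}\cdot\alpha^{-1},\varphi(g^{-1},\alpha^{-1}))=(g^{-1}\cdot\alpha^{-1},\varphi(g,g^{-1}\cdot\alpha^{-1})^{-1})$ that the paper writes down directly. The extra domain/range bookkeeping you carry out is exactly what the paper leaves implicit, so the two arguments are the same in substance.
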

\begin{proof}
	If $(\alpha,g)\in (\Cat \Join^\varphi \Grpd)^{-1}$, then there exists $(\beta,h)\in \Cat \Join^\varphi \Grpd$ such that 
	$$(\alpha,g)(\beta,h)=(\alpha(g\cdot \beta),\varphi(g,\alpha)h)=r(\alpha,g)=(r(\alpha),r(\alpha))\,.$$
	Therefore, $\alpha(g\cdot \beta)=r(\alpha)$, and so $\alpha\in \Cat^{-1}$. Hence, $(\alpha,g)\in \Cat^{-1}{}_s\times_r\Grpd$. If $(\alpha,g)\in \Cat^{-1}{}_s\times_r\Grpd$,  then 
	$$(\alpha,g)(g^{-1}\cdot \alpha^{-1},\varphi(g,g^{-1}\cdot\alpha^{-1})^{-1})=(r(\alpha),r(\alpha))\,,$$
	and so $(\alpha,g)\in (\Cat \Join^\varphi \Grpd)^{-1}$.
\end{proof}

\begin{proposition}\label{proposition_LC and FA}
Let $\Cat$ be a LCSC with length function $\dmap:\Lambda\to\Gamma$, let $\Grpd$ be a discrete groupoid acting on $\Cat$, and let $(\Cat,\dmap,\Grpd,\varphi)$ be a category system. Then:
\begin{enumerate}
\item $\Cat\Join^\varphi \Grpd$ is left cancellative.
\item If $\Cat$ is finitely (singly) aligned, then:
\begin{enumerate}
\item $(\alpha, g)(\Cat\Join^\varphi \Grpd) \cap (\beta,h)(\Cat\Join^\varphi \Grpd)=(\alpha\Cat \cap \beta\Cat)_s\times _r \Grpd$.
\item $\Cat\Join^\varphi \Grpd$ is finitely (singly) aligned.
\end{enumerate}
\end{enumerate}
\end{proposition}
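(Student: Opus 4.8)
The plan is to treat the three assertions in order, leaning on the groupoid structure of $\Grpd$ (Lemma~\ref{lemma_inv}) and on the cocycle identities of Proposition~\ref{Prop:left cancellative small category} to reduce everything to properties of $\Cat$ itself. The guiding observation is that, because $\Grpd$ is a groupoid, the second coordinate is ``free'' in each right ideal, so the combinatorics of right ideals in $\Cat\Join^\varphi\Grpd$ is governed entirely by that of $\Cat$.

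For (1), I would start from an equality $(\alpha,g)(\beta,h)=(\alpha,g)(\beta',h')$ with both products defined, so that $s(g)=r(\beta)=r(\beta')$. Comparing coordinates of the composition law gives $\alpha(g\cdot\beta)=\alpha(g\cdot\beta')$ and $\varphi(g,\beta)h=\varphi(g,\beta')h'$. Left cancellation in $\Cat$ yields $g\cdot\beta=g\cdot\beta'$, and since $\phi(g)$ is a bijection (it is a partial isomorphism) I apply $\phi(g^{-1})$ to deduce $\beta=\beta'$. Substituting back, the second coordinates become $\varphi(g,\beta)h=\varphi(g,\beta)h'$, and since $\varphi(g,\beta)\in\Grpd$ is invertible I cancel it to obtain $h=h'$. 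Thus $(\beta,h)=(\beta',h')$, establishing left cancellation.

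The heart of the matter is (2a), from which (2b) follows formally. I claim that for every $(\alpha,g)$ the right ideal satisfies
$$(\alpha,g)(\Cat\Join^\varphi\Grpd)=(\alpha\Cat)_s\times_r\Grpd,$$
independently of $g$. The inclusion $\subseteq$ is a direct computation: any product $(\alpha,g)(\gamma,f)=(\alpha(g\cdot\gamma),\varphi(g,\gamma)f)$ has first coordinate in $\alpha\Cat$. For $\supseteq$, given $(\delta,k)$ with $\delta=\alpha\mu\in\alpha\Cat$ and $s(\delta)=r(k)$, I set $\gamma:=g^{-1}\cdot\mu$ and $f:=\varphi(g,\gamma)^{-1}k$. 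Using that $\phi(g)$ maps $s(g)\Cat$ bijectively onto $r(g)\Cat=s(\alpha)\Cat$, the element $\gamma\in s(g)\Cat$ is well defined, $r(\gamma)=s(g)$, and $g\cdot\gamma=\mu$; using Proposition~\ref{Prop:left cancellative small category}(2)--(3) together with $s(g\cdot\gamma)=s(\mu)=r(k)$, one checks that $\varphi(g,\gamma)^{-1}k$ is a legitimate composition and that $(\gamma,f)\in\Cat\Join^\varphi\Grpd$. Then $(\alpha,g)(\gamma,f)=(\alpha\mu,\varphi(g,\gamma)\varphi(g,\gamma)^{-1}k)=(\delta,k)$, giving the reverse inclusion. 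Intersecting two such ideals immediately yields $(\alpha\Cat\cap\beta\Cat)_s\times_r\Grpd$, which is (2a).

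Finally, for (2b), if $\Cat$ is finitely aligned I write $\alpha\Cat\cap\beta\Cat=\bigcup_{\gamma\in\Gamma}\gamma\Cat$ for a finite $\Gamma\subset\Cat$. Taking the fibred product with $\Grpd$ commutes with this finite union, so by (2a)
$$(\alpha,g)(\Cat\Join^\varphi\Grpd)\cap(\beta,h)(\Cat\Join^\varphi\Grpd)=\bigcup_{\gamma\in\Gamma}(\gamma\Cat)_s\times_r\Grpd=\bigcup_{\gamma\in\Gamma}(\gamma,s(\gamma))(\Cat\Join^\varphi\Grpd),$$
where I use the formula of (2a) with the unit $g=s(\gamma)$ to realize each piece as a principal right ideal. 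This exhibits the intersection as a finite union of principal right ideals, so $\Cat\Join^\varphi\Grpd$ is finitely aligned; the singly aligned case is the same argument with $|\Gamma|\le 1$. I expect the only delicate point to be the well-definedness bookkeeping in the $\supseteq$ inclusion of (2a)---matching sources and ranges so that $\gamma$, $f$, and the relevant products actually live where claimed---rather than any conceptual difficulty.
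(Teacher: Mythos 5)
Your proof is correct and follows essentially the same route as the paper: coordinatewise left cancellation for (1), identification of each principal right ideal $(\alpha,g)(\Cat\Join^\varphi\Grpd)$ with $(\alpha\Cat)_s\times_r\Grpd$ for (2a), and the formal finite-union argument with $(\gamma,s(\gamma))$ generating each piece for (2b). The only cosmetic difference is in the reverse inclusion of (2a): where you solve directly for the preimage $(\gamma,f)=(g^{-1}\cdot\mu,\varphi(g,\gamma)^{-1}k)$, the paper factors $(\alpha,g)=(\alpha,s(\alpha))(s(\alpha),g)$, observes that $(s(\alpha),g)$ is invertible so that $(\alpha,g)\approx(\alpha,s(\alpha))$, and then writes $z=(\alpha\lambda,m)=(\alpha,s(\alpha))(\lambda,m)$ --- the same computation in disguise.
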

\begin{proof} $\mbox{ }$

(1) By hypothesis, $\Cat$ is left cancellative. Let $(\alpha, g), (\beta ,h), (\gamma, k)\in \Cat \Join^\varphi \Grpd$ such that $(\alpha, g)(\beta, h)=(\alpha, g)(\gamma, k)$. Then, $(\alpha(g\cdot \beta), \varphi(g, \beta)h)=(\alpha(g\cdot \gamma), \varphi(g, \gamma)k)$. Thus, we have that $\alpha(g\cdot \beta)= \alpha(g\cdot \gamma)$, and by left cancellation we get $g\cdot \beta= g\cdot \gamma$, whence $\beta=\gamma$. Moreover, $\varphi(g, \beta)h = \varphi(g, \gamma)k = \varphi(g, \beta)k$, and so $h=k$. Thus, $(\beta ,h) = (\gamma, k)$, as desired.

(2) Let us prove both asserts:
\begin{enumerate}
\item[(a)] First, let $(\alpha, g),(\beta,h)\in \Cat \Join^\varphi \Grpd$ such that $(\alpha,g)\Cap (\beta,h)$. This means that there exist $(\gamma,k),(\delta,l)\in  \Cat \Join^\varphi \Grpd$ such that $z:=(\alpha, g)(\gamma, k)=(\beta, h)(\delta, l)$, whence $$z=(\alpha(g\cdot \gamma), \varphi(g, \gamma)k)=( \beta(h\cdot \delta), \varphi(h, \delta)l)\,.$$ In particular, $\alpha(g\cdot \gamma)=\beta(h\cdot \delta)\in \alpha\Lambda \cap \beta\Lambda$. Moreover, since $z\in \Cat \Join^\varphi \Grpd$, we conclude that $z\in (\alpha\Cat \cap \beta\Cat)_s\times _r \Grpd$.

Conversely, suppose that $z\in (\alpha\Cat \cap \beta\Cat)_s\times _r \Grpd$. Then, $z=(\varepsilon, m)$ with $\varepsilon \in \alpha\Cat \cap \beta\Cat$, $m\in \Grpd$ and $s(\varepsilon)=c(\phi(m))$. Set $\varepsilon=\alpha\lambda$, and notice that, for any $(\alpha, g)\in \Cat \Join^\varphi \Grpd$, $(\alpha,g)=(\alpha, s(\alpha))(s(\alpha), g)\in \Cat \Join^\varphi \Grpd$, with $(s(\alpha), g)\in (\Cat \Join^\varphi \Grpd)^{-1}$ with inverse $(g^{-1}\cdot s(\alpha), g^{-1})$. Thus, $(\alpha, \text{id}_{s(\alpha)})\approx (\alpha, g)$ in $\Cat$. Hence, $z=(\alpha\lambda, m)=(\alpha, s(\alpha))(\lambda, m)\in (\alpha, s(\alpha))(\Cat \Join^\varphi \Grpd)=(\alpha, g)(\Cat \Join^\varphi \Grpd)$. Similarly, we prove that $z\in (\beta, h)(\Cat \Join^\varphi \Grpd) $. Thus, $z\in (\alpha, g)(\Cat \Join^\varphi \Grpd) \cap (\beta,h)(\Cat \Join^\varphi \Grpd)$, so we are done.

\item[(b)] By part (a), for any $\gamma\in \Cat$ we have $\gamma\Cat{}_s\times _r \Grpd=(\gamma, s(\gamma))(\Cat \Join^\varphi \Grpd)$. 

Suppose that $\Lambda$ is finitely (singly) aligned. Then, using part (a), we have that
\begin{align*}
(\alpha, g)(\Cat \Join^\varphi \Grpd) \cap (\beta,h)(\Cat \Join^\varphi \Grpd) &   = (\alpha\Cat \cap \beta\Cat){}_s\times _r \Grpd =   \left(\bigcup\limits_{\gamma\in \alpha\vee \beta} \gamma\Cat \right) { }_s\times_c \Grpd \\
& =\bigcup\limits_{\gamma\in \alpha\vee \beta}\left( \gamma\Cat {}_s\times_r G\right)=\bigcup_{\gamma\in \alpha\vee \beta}(\gamma, s(\gamma))(\Cat\Join^\varphi \Grpd) \\
& = \bigcup_{\eta\in (\alpha\vee\beta){}_s\times_r \Cat^0}\eta(\Cat\Join^\varphi \Grpd)\,.
\end{align*}
Thus, $\Cat\Join^\varphi \Grpd$ is finitely (singly) aligned, as desired.
\end{enumerate}
\end{proof}

Also, we can prove an analog of \cite[Lemma 4.15]{BKQS}

\begin{lemma}\label{lemma_FiniteCovers}
Let $\Cat$ be a LCSC with length function $\dmap:\Lambda\to\Gamma$, let $\Grpd$ be a discrete groupoid acting on $\Cat$, and let $(\Cat,\dmap,\Grpd,\varphi)$ be a category system. Take any $(v, v)\in (\Cat\Join^\varphi \Grpd)^0$, and let $F\subseteq (v, v)(\Cat\Join^\varphi \Grpd)$. Set
$$H:=\{ \alpha \in v\Cat : \text{ there is } g\in \Grpd \text{ such that } (\alpha, g)\in F\}.$$
Then, $F$ is exhaustive at $(v, v)$ if and only if $H$ is exhaustive at $v$.
\end{lemma}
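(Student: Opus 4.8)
The plan is to reduce the entire statement to a single observation about the meet relation $\Cap$ in the Zappa-Sz\'ep product: for $(\beta,g),(\gamma,f)\in\Cat\Join^\varphi\Grpd$ one has $(\beta,g)\Cap(\gamma,f)$ if and only if $\beta\Cap\gamma$ in $\Cat$, with the groupoid coordinates playing no role whatsoever. This is essentially the set equality of Proposition \ref{proposition_LC and FA}(2)(a), namely $(\beta,g)(\Cat\Join^\varphi\Grpd)\cap(\gamma,f)(\Cat\Join^\varphi\Grpd)=(\beta\Cat\cap\gamma\Cat){}_s\times_r\Grpd$. First I would note that this fibred set is nonempty precisely when $\beta\Cat\cap\gamma\Cat\neq\emptyset$: any $\varepsilon\in\beta\Cat\cap\gamma\Cat$ yields $(\varepsilon,s(\varepsilon))\in(\beta\Cat\cap\gamma\Cat){}_s\times_r\Grpd$ (the unit $s(\varepsilon)$ fills the fibre), and conversely a nonempty fibred set forces $\beta\Cat\cap\gamma\Cat\neq\emptyset$. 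Hence $(\beta,g)\Cap(\gamma,f)\iff\beta\Cap\gamma$, independently of $g$ and $f$.

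Next I would record the translation between the two ambient sets. Every element of $(v,v)(\Cat\Join^\varphi\Grpd)$ has the form $(\gamma,f)$ with $\gamma\in v\Cat$ and $s(\gamma)=r(f)$; indeed $(v,v)(\gamma,f)=(v(v\cdot\gamma),\varphi(v,\gamma)f)=(\gamma,f)$, using that $v$ is a unit, that $v\cdot\gamma=\gamma$, and that $\varphi(v,\gamma)=\varphi(r(\gamma),\gamma)=s(\gamma)$ by Definition \ref{Def:CategoryCocycle}(3). Conversely, any $\gamma\in v\Cat$ lifts to $(\gamma,s(\gamma))\in(v,v)(\Cat\Join^\varphi\Grpd)$. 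Thus the first-coordinate projection carries $(v,v)(\Cat\Join^\varphi\Grpd)$ onto $v\Cat$, and by its very definition $H$ is precisely the image of $F$ under this projection.

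With these two facts both implications are immediate from Definition \ref{definition2_3_7}. Assuming $F$ exhaustive at $(v,v)$ and taking $\gamma\in v\Cat$, I apply exhaustiveness to the lift $(\gamma,s(\gamma))$ to obtain $(\beta,g)\in F$ with $(\beta,g)\Cap(\gamma,s(\gamma))$, whence $\beta\Cap\gamma$ and $\beta\in H$, so $H$ is exhaustive at $v$. Conversely, assuming $H$ exhaustive at $v$ and taking $(\gamma,f)\in(v,v)(\Cat\Join^\varphi\Grpd)$, I have $\gamma\in v\Cat$, so there is $\beta\in H$ with $\beta\Cap\gamma$; choosing any $g$ with $(\beta,g)\in F$ gives $(\beta,g)\Cap(\gamma,f)$, so $F$ is exhaustive at $(v,v)$. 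I do not expect a serious obstacle; the only point requiring care is to confirm that the equivalence $(\beta,g)\Cap(\gamma,f)\iff\beta\Cap\gamma$ genuinely forgets the groupoid coordinates, so that passing to $H$ (which discards $g$) loses no information relevant to exhaustiveness. If one is uneasy invoking Proposition \ref{proposition_LC and FA}(2)(a) here, since the present lemma does not assume finite alignment, the equivalence can be reproved directly: the forward implication is a one-line computation on first coordinates, and the reverse uses $(\beta,g)\approx(\beta,s(\beta))$ together with Lemma \ref{lemma_inv}, exactly as in the proof of that proposition, where finite alignment is in fact not used for this particular identity.
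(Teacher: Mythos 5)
Your proof is correct and follows essentially the same route as the paper's: both rest on the right-ideal intersection identity $(\alpha,g)(\Cat\Join^\varphi \Grpd)\cap(\beta,h)(\Cat\Join^\varphi \Grpd)=(\alpha\Cat\cap\beta\Cat){}_s\times_r\Grpd$ from Proposition \ref{proposition_LC and FA}(2)(a), lift elements of $v\Cat$ to $(\gamma,s(\gamma))$, and project first coordinates, the only cosmetic difference being that the paper packages the irrelevance of the groupoid coordinate as a WLOG normalization $F=H{}_s\times_r\Cat^0$ (via $(\alpha,g)\approx(\alpha,s(\alpha))$ and \cite[Lemma 2.3]{BKQS}) where you state it as the equivalence $(\beta,g)\Cap(\gamma,f)\iff\beta\Cap\gamma$. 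Your closing caveat is also well taken: the lemma does not assume finite alignment while Proposition \ref{proposition_LC and FA}(2) formally does, but the proof of identity (2)(a) never uses that hypothesis, so both your argument and the paper's are sound.
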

\begin{proof}
Since $(\alpha, s(\alpha)) \approx (\alpha ,g)$ for all $(\alpha, g)\in \Cat\Join^\varphi \Grpd$, by \cite[Lemma 2.3]{BKQS} we can assume without loss of generality that $F=H{}_s\times_r \Cat^0$. 

First, suppose that $F$ is exhaustive at $(v, v)$. If $\beta\in v\Cat$, then $(\beta, s(\beta))\in (v, v)(\Cat\Join^\varphi \Grpd)$. Then, there exists $(\alpha, s(\alpha))\in F$ such that 
$$\emptyset\ne (\alpha, s(\alpha))(\Cat\Join^\varphi \Grpd) \cap (\beta, s(\beta))(\Cat\Join^\varphi \Grpd)=(\alpha\Cat \cap \beta\Cat){}_s\times _r \Grpd\,.$$
Hence, $\alpha\Cat \cap \beta\Cat\ne \emptyset$, and since $\alpha\in H$, we conclude that
$H$ is exhaustive at $v$.

Conversely, suppose that $H$ is exhaustive at $v$. Set $(\beta, g)\in (v, v)(\Cat\Join^\varphi \Grpd)$. Then, $\beta\in v\Cat$, so that there exists $\alpha \in H$ such that $\alpha\Cat \cap \beta\Cat\ne \emptyset$. Since $(\beta, g)\approx (\beta, s(\beta))$ in $\Cat\Join^\varphi \Grpd$, we have that
$$
(\alpha, s(\alpha))(\Cat\Join^\varphi \Grpd) \cap (\beta, g)(\Cat\Join^\varphi \Grpd)$$
$$ = (\alpha, s(\alpha))(\Cat\Join^\varphi \Grpd) \cap (\beta, s(\beta))(\Cat\Join^\varphi \Grpd)$$
$$ = (\alpha\Cat \cap \beta\Cat){}_s\times _r \Grpd \ne \emptyset \,.
$$
 
Since $(\alpha, s(\alpha))\in F$, we conclude that $F$ is exhaustive at $(v, v)$, as desired.
\end{proof}

\begin{proposition}\label{ZP_WFP}
Let $\Cat$ be a LCSC with length function $\dmap:\Lambda\to\Gamma$ satisfying the WFP, let $\Grpd$ be a discrete groupoid acting on $\Cat$, and let $(\Cat,\dmap,\Grpd,\varphi)$ be a category system.  Then the map $\dmap: \Cat\Join^\varphi \Grpd\to \Gamma$ defined by $\dmap(\alpha,g)=\dmap(\alpha)$ for every $(\alpha,g)\in \Cat\Join^\varphi \Grpd$ is a length function satisfying the WFP.
\end{proposition}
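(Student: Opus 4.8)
The plan is to verify the two defining properties separately, reducing each to the corresponding property of $\dmap$ on $\Cat$. For the length function axiom, I would take composable $(\alpha,g),(\beta,h)\in\Cat\Join^\varphi\Grpd$ and compute, using the definition of the product,
$$\dmap\big((\alpha,g)(\beta,h)\big)=\dmap\big(\alpha(g\cdot\beta)\big)=\dmap(\alpha)\,\dmap(g\cdot\beta).$$
The key observation is that $g\cdot\beta=\phi(g)(\beta)$ and that, by condition $(4)$ of Definition \ref{Def: Partial_Isomorphism}, every partial isomorphism preserves length, so $\dmap(g\cdot\beta)=\dmap(\beta)$. Hence the displayed quantity equals $\dmap(\alpha,g)\,\dmap(\beta,h)$, which is the required multiplicativity.

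For the existence half of the WFP, given $(\alpha,g)$ with $\dmap(\alpha,g)=\dmap(\alpha)=\gamma_1\gamma_2$, I would apply the WFP of $\dmap$ on $\Cat$ to obtain a factorization $\alpha=\alpha_1\alpha_2$ with $s(\alpha_1)=r(\alpha_2)$ and $\dmap(\alpha_i)=\gamma_i$. The candidate lift is $(\alpha,g)=(\alpha_1,s(\alpha_1))(\alpha_2,g)$, and checking this is a short computation: composability holds since $r(\alpha_2)=s(\alpha_1)$; the first coordinate is $\alpha_1(s(\alpha_1)\cdot\alpha_2)=\alpha_1\alpha_2=\alpha$ because $\phi(s(\alpha_1))$ is the identity; and the second coordinate is $\varphi(s(\alpha_1),\alpha_2)\,g=s(\alpha_2)\,g=g$, using $\varphi(r(\alpha_2),\alpha_2)=s(\alpha_2)$ from Definition \ref{Def:CategoryCocycle}$(3)$ together with $s(\alpha_2)=s(\alpha)=r(g)$. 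The two factors then carry the prescribed lengths $\gamma_1$ and $\gamma_2$.

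The substantial step is the uniqueness-up-to-invertibles half, and I expect it to be the main obstacle. Suppose $(\alpha,g)=(\beta_1,k)(\beta_2,m)$ is another factorization with the prescribed lengths. Reading off coordinates gives $\beta_1(k\cdot\beta_2)=\alpha$ and $\varphi(k,\beta_2)m=g$; moreover $\dmap(k\cdot\beta_2)=\dmap(\beta_2)=\gamma_2$ and $s(\beta_1)=r(k\cdot\beta_2)$ (the latter via Proposition \ref{Prop:left cancellative small category}$(1)$), so $\alpha=\beta_1(k\cdot\beta_2)$ is a length-$(\gamma_1,\gamma_2)$ factorization of $\alpha$ in $\Cat$. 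The WFP in $\Cat$ then yields $h_1,h_2\in\Cat^{-1}$ with $\beta_1=\alpha_1h_1$ and $k\cdot\beta_2=h_2\alpha_2$. I would propose the invertible elements $(h_1,k)$ and $\big(k^{-1}\cdot h_2,\varphi(k^{-1},h_2)\big)$, both lying in $(\Cat\Join^\varphi\Grpd)^{-1}=\Cat^{-1}{}_s\times_r\Grpd$ by Lemma \ref{lemma_inv} and Remark \ref{Rem:ConditionsAction}$(2)$ (the action sends invertibles to invertibles). Verifying $(\beta_1,k)=(\alpha_1,s(\alpha_1))(h_1,k)$ is direct. The delicate verification is $(\beta_2,m)=\big(k^{-1}\cdot h_2,\varphi(k^{-1},h_2)\big)(\alpha_2,g)$: the first coordinate reproduces $\beta_2$ through the self-similarity identity $k^{-1}\cdot(h_2\alpha_2)=(k^{-1}\cdot h_2)(\varphi(k^{-1},h_2)\cdot\alpha_2)$ (Definition \ref{Def:CategoryCocycle}$(5)$), while the second coordinate requires showing $\varphi(\varphi(k^{-1},h_2),\alpha_2)\,g=m$. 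I expect this to hinge on combining the nested-cocycle identity $\varphi(\varphi(k^{-1},h_2),\alpha_2)=\varphi(k^{-1},h_2\alpha_2)=\varphi(k^{-1},k\cdot\beta_2)$ (Definition \ref{Def:CategoryCocycle}$(4)$) with the inversion formula $\varphi(k^{-1},k\cdot\beta_2)=\varphi(k,\beta_2)^{-1}$ from Proposition \ref{Prop:left cancellative small category}$(3)$; then $\varphi(k,\beta_2)m=g$ gives exactly $m$. The routine but error-prone part to watch is matching all source/range objects so that every product written above is legal.
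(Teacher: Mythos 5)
Your proposal is correct and follows essentially the same route as the paper's proof: the same reduction of multiplicativity to length-preservation of partial isomorphisms, the same canonical lift $(\alpha,g)=(\alpha_1,s(\alpha_1))(\alpha_2,g)$, and for uniqueness the same application of the WFP in $\Cat$ to $\alpha=\beta_1(k\cdot\beta_2)$ followed by the same pair of invertibles, with your $\bigl(k^{-1}\cdot h_2,\varphi(k^{-1},h_2)\bigr)$ being exactly the element the paper writes as $(f_2^{-1},h_1)^{-1}$. The chain of identities you anticipate (Definition \ref{Def:CategoryCocycle}$(4)$, $(5)$ and Proposition \ref{Prop:left cancellative small category}$(3)$) is precisely what the paper's computation uses.
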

\begin{proof}
The map $\dmap: \Cat\Join^\varphi \Grpd\to \Gamma$  given by $\dmap(\alpha,g)=\dmap(\alpha)$ for every $(\alpha,g)\in \Cat\Join^\varphi \Grpd$ is clearly well-defined. Now let  $(\alpha, g),(\beta,h)\in \Cat \Join^\varphi \Grpd$ with $s(\alpha,g)=r(\beta,h)$. Then, 
\begin{align*}
\dmap((\alpha, g)(\beta,h)) & = \dmap(\alpha(g\cdot \beta),\varphi(g,\beta)h)=\dmap(\alpha(g\cdot \beta)) \\
& =\dmap(\alpha)\dmap(g\cdot \beta)=\dmap(\alpha)\dmap(\phi(g)(\beta)) \\
& =\dmap(\alpha)\dmap(\beta)=\dmap(\alpha,g)\dmap(\beta,h)\,,
\end{align*}
since $\phi(g)\in \PisoG$. So, $\dmap$ is a length function of $\Cat\Join^\varphi \Grpd$.

Now, let $(\alpha,g)\in \Cat\Join^\varphi \Grpd$, with $\dmap(\alpha,g)=\dmap(\alpha):=\gamma$. Then, by the WFP,  given $\gamma_1,\gamma_2\in\Gamma$ such that $\gamma=\gamma_1\gamma_2$, there exist $\alpha_1,\alpha_2\in \Cat$ with $\alpha=\alpha_1\alpha_2$ and $\dmap(\alpha_1)=\gamma_1$ and $\dmap(\alpha_2)=\gamma_2$. Thus, we have that 
$$(\alpha,g)=(\alpha_1,s(\alpha_1))(\alpha_2,g)\,,$$
and 
$$\gamma=\dmap(\alpha)=\dmap(\alpha,g)=\dmap(\alpha_1,s(\alpha_1))\dmap(\alpha_2,g)=\dmap(\alpha_1)\dmap(\alpha_2)=\gamma_1\gamma_2\,.$$
Let us suppose there exist $(\beta_1,h_1),(\beta_2,h_2)\in \Cat\Join^\varphi \Grpd$  with  
$$(\beta_1,h_1)(\beta_2,h_2)=(\beta_1(h_1\cdot \beta_2),\varphi(h_1,\beta_2)h_2)=(\alpha,g)$$
such that $\dmap(\beta_1,h_1)=\dmap(\beta_1)=\gamma_1$ and $\dmap(\beta_2,h_2)=\dmap(\beta_2)=\gamma_2$. By the WFP there exists $f_1,f_2\in \Cat^{-1}$ such that $\beta_1=\alpha_1f_1$ and $h_1\cdot \beta_2=f_2\alpha_2$, whence
$$(\beta_1,h_1)=(\alpha_1f_1,h_1)=(\alpha_1,s(\alpha_1))(f_1,h_1)$$
and 
\begin{align*}
(\beta_2,h_2)& = (h_1^{-1}\cdot(h_1\cdot \beta_2),\varphi(h_1,\beta_2)^{-1}g  ) \\
& =(h_1^{-1}\cdot(h_1\cdot \beta_2),\varphi(h_1,h_1^{-1}\cdot f_2\alpha_2)^{-1}g  )\\
& = ((h_1^{-1}\cdot (f_2\alpha_2),\varphi( h_1^{-1},f_2 \alpha_2) g) \\
& = ((h_1^{-1}\cdot f_2)(\varphi(h_1^{-1},f_2)\cdot \alpha_2),\varphi(\varphi(h_1^{-1},f_2),\alpha_2) g) \\
& = (h_1^{-1}\cdot f_2, \varphi(h_1^{-1},f_2))(\alpha_2,g) \\
& = (h_1^{-1}\cdot f_2, \varphi(h_1,h_1^{-1}\cdot f_2)^{-1})(\alpha_2,g) \\
& = (f^{-1}_2, h_1)^{-1}(\alpha_2,g)\,,
\end{align*}
as desired.
\end{proof}

\begin{remark}
	Observe that in the proof of the above Proposition that if $\Cat$ satisfies the WFP as it is defined in \cite{LV2} then so does the Zappa-Sz\'ep product. 
\end{remark}

\begin{definition}
Let $\Cat$ be a LCSC with length function $\dmap:\Lambda\to\Gamma$, and let $\Grpd$ be a discrete groupoid acting on $\Cat$. We say that a category system  $(\Cat,\dmap,\Grpd,\varphi)$ is \emph{pseudo-free} if $\Cat$ is an action-free category, and  whenever $g\cdot \alpha=f\alpha$ and $\varphi(g,\alpha)=s(\alpha)$ for some $\alpha\in \Cat$, $g\in \Grpd$ and $f\in \Cat^{-1}$, then $g\in \Grpdu$.
\end{definition}

\begin{proposition}\label{ZP_RC}
Let $\Cat$ be a LCSC with length function $\dmap:\Lambda\to\Gamma$ satisfying the WFP, let $\Grpd$ be a discrete groupoid acting on $\Cat$, and let $(\Cat,\dmap,\Grpd,\varphi)$ be a category system.  Then, $\Cat\Join^\varphi \Grpd$ is right cancellative if and only if  $(\Cat,\dmap,\Grpd,\varphi)$ is pseudo-free.
\end{proposition}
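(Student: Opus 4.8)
The plan is to reduce right cancellativity of $\Cat\Join^\varphi\Grpd$ to its action-freeness, and then to translate action-freeness into the pseudo-free condition via the description of the invertibles. First I would observe that $\Cat\Join^\varphi\Grpd$ is a LCSC (Proposition \ref{proposition_LC and FA}(1)) and that $\dmap(\alpha,g):=\dmap(\alpha)$ is a length function on it satisfying the WFP (Proposition \ref{ZP_WFP}). Hence Lemma \ref{pseudofree_cat_right_cancellative}, applied to $\Cat\Join^\varphi\Grpd$ itself, shows that $\Cat\Join^\varphi\Grpd$ is right cancellative if and only if it is action-free. It therefore suffices to prove that $\Cat\Join^\varphi\Grpd$ is action-free if and only if $(\Cat,\dmap,\Grpd,\varphi)$ is pseudo-free.

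The key computation is the following. By Lemma \ref{lemma_inv} the invertibles of $\Cat\Join^\varphi\Grpd$ are exactly the pairs $(f,g)$ with $f\in\Cat^{-1}$. Given such an $(f,g)$ and $(\gamma,k)\in\Cat\Join^\varphi\Grpd$ with $s(f,g)=r(\gamma,k)$ (so $s(g)=r(\gamma)$), the fixed-point identity $(f,g)(\gamma,k)=(\gamma,k)$ unwinds, via the composition formula, to the two equations $f(g\cdot\gamma)=\gamma$ and $\varphi(g,\gamma)k=k$. Right-multiplying the second by $k^{-1}$ in the groupoid $\Grpd$ and using $s(\varphi(g,\gamma))=s(\gamma)=r(k)$ (Proposition \ref{Prop:left cancellative small category}(2)), it becomes $\varphi(g,\gamma)=r(k)=s(\gamma)$; the first is equivalent to $g\cdot\gamma=f^{-1}\gamma$ with $f^{-1}\in\Cat^{-1}$. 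These are precisely the hypotheses occurring in the definition of pseudo-freeness, which is what makes the two notions match.

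For the implication pseudo-free $\Rightarrow$ action-free, I would start from the fixed-point equations above: the second clause of pseudo-freeness (with $\alpha=\gamma$) forces $g\in\Grpdu$, and since $s(g)=r(\gamma)$ this gives $g=r(\gamma)$; then $g\cdot\gamma=\gamma$, so $f^{-1}\gamma=\gamma$, and action-freeness of $\Cat$ yields $f^{-1}=r(\gamma)$, whence $(f,g)=(r(\gamma),r(\gamma))=r(\gamma,k)$, as required. For the converse I would realize each pseudo-free hypothesis inside the product. To obtain action-freeness of $\Cat$, from $f'\gamma=\gamma$ with $f'\in\Cat^{-1}$ I set $g=r(\gamma)$, $f=(f')^{-1}$ and $k=s(\gamma)$; using $\varphi(r(\gamma),\gamma)=s(\gamma)$ (Definition \ref{Def:CategoryCocycle}(3)) and that units act trivially, one checks $(f,g)\in(\Cat\Join^\varphi\Grpd)^{-1}$ and $(f,g)(\gamma,k)=(\gamma,k)$, so action-freeness of the product gives $f=r(\gamma)$, i.e.\ $f'=r(\gamma)$. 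To obtain the second clause, from $g\cdot\alpha=f\alpha$ and $\varphi(g,\alpha)=s(\alpha)$ I take the invertible $(f^{-1},g)$ (well-defined since $r(f)=r(g\cdot\alpha)=r(g)$ by Proposition \ref{Prop:left cancellative small category}(1)) together with $(\alpha,s(\alpha))$; a direct computation gives $(f^{-1},g)(\alpha,s(\alpha))=(\alpha,s(\alpha))$, whence $(f^{-1},g)=(r(\alpha),r(\alpha))$ and $g=r(\alpha)\in\Grpdu$.

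The genuine content is the reduction via Proposition \ref{ZP_WFP} and Lemma \ref{pseudofree_cat_right_cancellative}, followed by the identification of the fixed-point condition for the invertibles $(f,g)$ with the two hypotheses of pseudo-freeness; everything else is bookkeeping. I expect the main care to be needed in verifying all the source/range compatibilities --- in particular that the chosen pairs really lie in $(\Cat\Join^\varphi\Grpd)^{-1}$ and are composable with the chosen $(\gamma,k)$ or $(\alpha,s(\alpha))$ --- and in correctly splitting the single conclusion $(f,g)=r(\gamma,k)$ into its two components, namely that $g$ is a unit and that the $\Cat^{-1}$-action on $\Cat$ is free.
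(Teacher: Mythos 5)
Your proposal is correct and follows essentially the same route as the paper: both arguments rest on Lemma \ref{pseudofree_cat_right_cancellative} together with Proposition \ref{ZP_WFP} to pass between right cancellativity and action-freeness, use Lemma \ref{lemma_inv} to identify the invertibles $(f,g)$, and unwind the same fixed-point equations $f(g\cdot\gamma)=\gamma$, $\varphi(g,\gamma)=s(\gamma)$ to match the two clauses of pseudo-freeness. The only cosmetic difference is that the paper's forward direction cancels $(\alpha,s(\alpha))$ on the right in the identity $(r(g),g)(\alpha,s(\alpha))=(f,s(f))(\alpha,s(\alpha))$ instead of routing both implications through action-freeness of the product, as you do.
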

\begin{proof}
First suppose that $\Cat\Join^\varphi \Grpd$ is right cancellative. Then $\Cat$ is right cancellative too, and hence action-free. Now, let us suppose there exist $\alpha\in \Cat$, $g\in \Grpd$ and $f\in \Cat^{-1}$ such that  $g\cdot \alpha=f\alpha$ and $\varphi(g,\alpha)\in \Grpdu$. Thus, 
$$(r(g),g)(\alpha,s(\alpha))=(f\alpha,s(\alpha))=(f,s(f))(\alpha,s(\alpha))\,,$$
but by right cancellation $g=s(f)\in \Grpdu$. Hence, $(\Cat,\dmap,\Grpd,\varphi)$ is pseudo-free.

On the other hand, suppose that $(\Cat,\dmap,\Grpd,\varphi)$ is pseudo-free, and let $(\alpha,g)\in \Cat\Join^\varphi \Grpd$ and $(f,h)\in (\Cat\Join^\varphi \Grpd)^{-1}$  such that
$$(f,h)(\alpha,g)=(f(h\cdot \alpha),\varphi(h,\alpha)g)=(\alpha,g)\,,$$
so $\alpha=f(h\cdot \alpha)$ and $\varphi(h,\alpha)=r(g)$, and hence  $f^{-1}\alpha=h\cdot \alpha$. But then, by pseudo-freeness, we have that $h=r(\alpha)$. Hence, $f^{-1}\alpha=\alpha$, but since $\Cat$ is action-free we have $f=r(\alpha)$. Therefore, $(f,h)=(r(\alpha),r(\alpha))\in (\Cat\Join^\varphi \Grpd)^0$, so $\Cat\Join^\varphi \Grpd$ is action-free, and hence right cancellative. 
\end{proof}

\section{Length functions in LCSC}

Now we will see that Zappa-Sz\'ep products of LCSC arises naturally among LCSC. In \cite{LV} they defined the \emph{generalized higher rank $k$-graphs} categories and described them as Zappa-Sz\'ep products. Here we slightly generalize their arguments to a more general class of LCSC. 

Let $\Cat$ be a LCSC, and let $M$ be a monoid with neutral element $1$, and  let $\dmap:\Cat\to M$ be a length function satisfying:
\begin{enumerate}
	\item[(LF1)] $\dmap^{-1}(1)=\Cat^{-1}$,
	\item[(LF2)] for every $\alpha\in \Cat$, if $\dmap(\alpha)=m_1m_2$ for $m_1,m_2\in M$ then there exist $\alpha_1,\alpha_2\in M$ such that $\alpha=\alpha_1\alpha_2$ and $\dmap(\alpha_i)=m_i$ for $i=1,2$.
\end{enumerate}

Observe that if $\dmap:\Cat\to M$ satisfies the WFP then it satisfies conditions (LF1) and (LF2).  

\begin{remark} A length function $\dmap:\Cat\to M$ satisfying the conditions (LF1) and (LF2) forces the monoid $M$ to be conical. 
	Let $m_1,m_2\in M$ such that $1=m_1 m_2$. Take any $\alpha\in \Cat^{-1}$, so $1=\dmap(\alpha)=m_1 m_2$, and hence by (LF2) there exist $\alpha_1,\alpha_2$ such that $\alpha=\alpha_1\alpha_2$ with $\dmap(\alpha_i)=m_i$ for $i=1,2$. Thus, $s(\alpha)=(\alpha^{-1}\alpha_1)\alpha_2$, so that $\alpha_2\in \Lambda^{-1}$. Hence, $\alpha_1=\alpha\alpha_2^{-1}\in \Lambda^{-1}$. By (LF1), $m_i=\dmap(\alpha_i)=0$ for $i=1,2$. Thus, $M$ is conical. 
	
\end{remark}

We denote by $e\in M$ is an \emph{atom} if whenever $e=m_1m_2$ then either $m_1=1$ or $m_2=1$. We denote by $M_a$ the set of all atomic elements of $M$. We say that $M$ is \emph{atomic} if $M$ is generated by its atoms.
We say that $\alpha\in \Cat$ is an \emph{atom} if whenever $\alpha=\beta\gamma$ then either $\beta$ or $\gamma$ is in $\Cat^{-1}$. We denote by $\Cat_a$ the atoms of $\Cat$.

\begin{lemma}\label{lemma_1}
	Let $\Cat$ be a LCSC, let $M$ be a conical monoid, and let $\dmap:\Cat\to M$ be a length function satisfying (LF1) and (LF2). Then, $\alpha$ is an atom if and only if $\dmap(\alpha)$ is an atom. In particular, $\Cat_a=\bigcup_{e\in M_a}\dmap^{-1}(e)$. 
\end{lemma}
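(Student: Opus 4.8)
The plan is to prove the biconditional by relating the factorization structure of $\alpha$ in $\Cat$ to the factorization structure of $\dmap(\alpha)$ in $M$, using (LF1) and (LF2) as the translation devices between the two. Throughout I will use (LF1) in the form $\dmap(\beta)=1$ if and only if $\beta\in\Cat^{-1}$, which lets me convert the statement ``$\beta$ or $\gamma$ is invertible'' into ``$\dmap(\beta)=1$ or $\dmap(\gamma)=1$,'' and vice versa.

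First I would prove the easier direction: if $\dmap(\alpha)$ is an atom, then $\alpha$ is an atom. Suppose $\alpha=\beta\gamma$. Then $\dmap(\alpha)=\dmap(\beta)\dmap(\gamma)$ by the length function property, so since $\dmap(\alpha)$ is an atom, either $\dmap(\beta)=1$ or $\dmap(\gamma)=1$. By (LF1) this means either $\beta\in\Cat^{-1}$ or $\gamma\in\Cat^{-1}$, which is exactly the statement that $\alpha$ is an atom. (I should note in passing that $\alpha\notin\Cat^{-1}$, since an atom of $M$ is by definition not equal to $1$, forcing $\dmap(\alpha)\neq 1$; this guards against the degenerate case, although it is not strictly needed for the implication as phrased.)

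Next I would prove the converse: if $\alpha$ is an atom, then $\dmap(\alpha)$ is an atom. Suppose $\dmap(\alpha)=m_1m_2$ in $M$. By (LF2) there exist $\alpha_1,\alpha_2\in\Cat$ with $\alpha=\alpha_1\alpha_2$ and $\dmap(\alpha_i)=m_i$. Since $\alpha$ is an atom, either $\alpha_1\in\Cat^{-1}$ or $\alpha_2\in\Cat^{-1}$, and then (LF1) gives $m_1=\dmap(\alpha_1)=1$ or $m_2=\dmap(\alpha_2)=1$. Hence $\dmap(\alpha)$ is an atom. I should also confirm $\dmap(\alpha)\neq 1$: if $\dmap(\alpha)=1$ then $\alpha\in\Cat^{-1}$ by (LF1), but an invertible element is not an atom (writing $\alpha=\alpha\,s(\alpha)$ with $\alpha$ already invertible does not make $\alpha$ an atom unless one interprets the definition so that invertibles are excluded; more robustly, one checks directly against the convention that atoms are non-invertible). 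The cleanest guard is to observe that $M$ conical plus $\alpha$ atom forces $\dmap(\alpha)\neq 1$ by the same (LF1) reasoning, so the atom condition is vacuously compatible.

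Finally, the ``in particular'' clause $\Cat_a=\bigcup_{e\in M_a}\dmap^{-1}(e)$ is an immediate reformulation: $\alpha\in\Cat_a$ iff $\alpha$ is an atom iff $\dmap(\alpha)$ is an atom (by the equivalence just proved) iff $\dmap(\alpha)\in M_a$ iff $\alpha\in\dmap^{-1}(e)$ for some $e\in M_a$. I expect no serious obstacle here; the only subtlety worth watching is the boundary bookkeeping around invertible elements and the unit $1$, i.e.\ making sure the definitions of atom in $\Cat$ and in $M$ are matched correctly through (LF1) so that neither side accidentally counts a unit as an atom. That alignment, rather than any real computation, is the part I would state carefully.
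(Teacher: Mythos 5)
Your proof is correct and follows essentially the same route as the paper's: the forward direction translates a factorization of $\alpha$ through $\dmap$ and applies (LF1), and the reverse direction (which the paper phrases as a contradiction rather than your contrapositive) lifts a factorization of $\dmap(\alpha)$ via (LF2) and applies (LF1) again. Your extra bookkeeping about units and invertibles is harmless but not needed, since under the paper's literal definitions both $1\in M$ and the elements of $\Cat^{-1}$ satisfy the atom condition vacuously, so the equivalence holds in that degenerate case as well.
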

\begin{proof}
	Let $e\in M$ be an atom, and let $\alpha\in \Cat$ with $\dmap(\alpha)=e$. If $\alpha=\beta\gamma$, then $e=\dmap(\alpha)=\dmap(\beta\gamma)=\dmap(\beta)\dmap(\gamma)$. Hence, either $\dmap(\beta)=1$ or $\dmap(\gamma)=1$, or equivalently either $\beta\in \Cat^{-1}$ and $\gamma\in \Cat^{-1}$. Thus, $\alpha$ is an atom.
	
	Now, let $\alpha\in \Cat$ be an atom of $\Cat$. Suppose that $\dmap(\alpha)=m$ is not an atom, so there exist $m_1,m_2\in M\setminus \{1\}$ such that $m=m_1 m_2$. By (LF2) there exist $\alpha_1,\alpha_2$ in $\Cat$ such that $\alpha=\alpha_1\alpha_2$ and $\dmap(\alpha_i)=m_i$ for $i=1,\ldots,k$, and by (LF1) $\alpha_1$ and $\alpha_2$ are not invertible. This contradicts that $\alpha$ is an atom.
\end{proof}
The following lemma is an easy consequence of Lemma \ref{lemma_1}.
\begin{lemma}\label{lemma_3}
	Let $\Cat$ be a LCSC, let $M$ be an atomic and conical monoid, and let $\dmap:\Cat\to M$ be a length function satisfying (LF1) and (LF2). Then every $\alpha\in \Cat\setminus \Cat^{-1}$ can be written as a finite composition of atoms. 
\end{lemma}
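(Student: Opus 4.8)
The plan is to lift an atomic factorization of $\dmap(\alpha)$ in $M$ up to a factorization of $\alpha$ in $\Cat$ by iterating (LF2), and then to recognize each resulting factor as an atom via Lemma \ref{lemma_1}.

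First I would record the starting data. Since $\alpha\in \Cat\setminus \Cat^{-1}$, condition (LF1) gives $\alpha\notin \dmap^{-1}(1)$, i.e. $\dmap(\alpha)\neq 1$. As $M$ is atomic, its underlying monoid is generated by $M_a$, so $\dmap(\alpha)$ admits an expression $\dmap(\alpha)=e_1e_2\cdots e_n$ with each $e_i\in M_a$ and $n\geq 1$ (the case $n=1$ meaning $\dmap(\alpha)$ is itself an atom, whence $\alpha$ is an atom by Lemma \ref{lemma_1} and there is nothing left to do).

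The core of the argument is an induction on $n$ using (LF2). Writing $\dmap(\alpha)=e_1\cdot(e_2\cdots e_n)$ and applying (LF2) yields $\alpha_1,\beta_1\in \Cat$ with $\alpha=\alpha_1\beta_1$, $\dmap(\alpha_1)=e_1$ and $\dmap(\beta_1)=e_2\cdots e_n$. Because $e_1$ is an atom, Lemma \ref{lemma_1} shows $\alpha_1$ is an atom. Now $\dmap(\beta_1)$ is a product of $n-1$ atoms, so by the induction hypothesis $\beta_1$ is a finite composition of atoms (and the case $n-1=0$, i.e. $\dmap(\beta_1)=1$, would force $\beta_1\in\Cat^{-1}$ by (LF1); but this cannot arise here since each $e_i\neq 1$, so the remaining factor always has nontrivial length until the last atom is peeled off). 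Concatenating $\alpha_1$ with the factorization of $\beta_1$ produces the desired expression $\alpha=\alpha_1\alpha_2\cdots\alpha_n$ as a composition of atoms. Note that composability $s(\alpha_i)=r(\alpha_{i+1})$ is automatic throughout, since each $\alpha_i$ is obtained as a genuine factor of a composition already defined in $\Cat$.

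The only point requiring a little care — and the closest thing to an obstacle — is the bookkeeping in the induction: (LF2) produces only a two-term split, so one must iterate it and verify at each stage that the ``tail'' factor $\beta_i$ still has length $e_{i+1}\cdots e_n\neq 1$ (hence is noninvertible and factors further) until exactly one atom remains. Since each $e_i$ is a nontrivial atom, (LF1) guarantees the tail never collapses prematurely into $\Cat^{-1}$, and the induction terminates after $n$ steps with all factors being atoms by Lemma \ref{lemma_1}.
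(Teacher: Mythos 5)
Your proof is correct and matches the paper's intent exactly: the paper omits the argument, stating only that the lemma is ``an easy consequence of Lemma \ref{lemma_1}'', and the intended easy consequence is precisely your induction---factor $\dmap(\alpha)$ into atoms of $M$, peel off one factor at a time with (LF2), and identify each piece as an atom of $\Cat$ via Lemma \ref{lemma_1}. Your side remark that conicality (together with (LF1)) keeps the tail from becoming invertible is sound, though one could avoid it entirely by inducting on the number of atoms in the factorization of the length rather than on noninvertibility of the tail.
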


\begin{remark}
	Under the hypothesis of Lemma \ref{lemma_3} the decomposition need not be unique. For example, let a length function $\dmap:\Cat\to M$ satisfying conditions (LF1) and (LF2) where $M=<2,5>_{\ZZ^+}\subsetneq \ZZ^+$. Then, by Lemma \ref{lemma_1}, we have that $\Lambda_a=\dmap^{-1}(2)\cup \dmap^{-1}(5)$. Thus, for any $\alpha\in \Cat$ with $\dmap(\alpha)=10$, there exist atomic factorizations $\alpha=\beta_1\cdots\beta_5=\gamma_1\gamma_2$ with $\dmap(\beta_i)=2$ for every $i=1,\ldots,5$ and $\dmap(\gamma_j)=5$ for every $j=1,2$. 
\end{remark}

Given a LCSC $\Cat$ and $\alpha\in \Cat$ we denote by $\alpha\Cat=\{\alpha\gamma: r(\gamma)=s(\alpha)\}$ a \emph{principal right ideal of $\Cat$}. From the above observations we can easily adapt the following results from \cite[Section 2]{LV2} to our context.

\begin{proposition}[{\cite[Lemma 2.3]{LV2}}]
	Let $\Cat$ be a LCSC, let $M$ be a conical atomic monoid, and let $\dmap:\Cat\to M$ be a length function satisfying (LF1) and (LF2). Then $\alpha\Cat$ is maximal if and only if $\alpha$ is an atom.
\end{proposition}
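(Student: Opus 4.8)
The plan is to translate both conditions---``$\alpha\Cat$ is maximal'' and ``$\alpha$ is an atom''---into equivalent statements about factorizations $\alpha=\beta\gamma$, using the dictionary between principal right ideals and the extension preorder recorded after Definition~\ref{defi1_1_1_2}. Recall from that list that, for $\sigma,\tau\in\Cat$, one has $\sigma\Cat\subseteq\tau\Cat$ exactly when $\sigma\in\tau\Cat$ (equivalently $\sigma=\tau\delta$ for some $\delta$), and $\sigma\Cat=\tau\Cat$ exactly when $\tau\in\sigma\Cat^{-1}$. I read ``$\alpha\Cat$ maximal'' as: $\alpha\Cat$ is a maximal element among the proper principal right ideals contained in $r(\alpha)\Cat$.

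First I would pin down properness. Since $\alpha=r(\alpha)\alpha$, one always has $\alpha\Cat\subseteq r(\alpha)\Cat$, and $\alpha\Cat=r(\alpha)\Cat$ forces $r(\alpha)=\alpha\delta$, i.e.\ $\alpha$ right-invertible, hence $\alpha\in\Cat^{-1}$ by left cancellation; conversely if $\alpha\in\Cat^{-1}$ then $r(\alpha)=\alpha\alpha^{-1}\in\alpha\Cat$. Thus $\alpha\Cat\subsetneq r(\alpha)\Cat$ iff $\alpha\notin\Cat^{-1}$, which by (LF1) is iff $\dmap(\alpha)\ne 1$. Combined with Lemma~\ref{lemma_1}, this shows that any atom $\alpha$ satisfies $\dmap(\alpha)\ne 1$ and so $\alpha\Cat$ is automatically proper, while any maximal (hence proper) $\alpha\Cat$ has $\alpha\notin\Cat^{-1}$.

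For the implication atom $\Rightarrow$ maximal, assume $\alpha$ is an atom (so $\alpha\Cat$ is proper) and suppose $\alpha\Cat\subseteq\beta\Cat$ with $\beta\Cat$ proper. Writing $\alpha=\beta\gamma$, the atom condition gives $\beta\in\Cat^{-1}$ or $\gamma\in\Cat^{-1}$; the first would make $\beta\Cat=r(\alpha)\Cat$, contradicting properness, so $\gamma\in\Cat^{-1}$ and therefore $\beta\Cat=\alpha\Cat$. Conversely, for maximal $\Rightarrow$ atom, take any factorization $\alpha=\beta\gamma$, so $\alpha\Cat\subseteq\beta\Cat$. If $\beta$ is non-invertible then $\beta\Cat$ is proper, and maximality forces $\beta\Cat=\alpha\Cat$, i.e.\ $\beta=\alpha g$ with $g\in\Cat^{-1}$; then $\alpha=\alpha g\gamma$ and left cancellation give $\gamma=g^{-1}\in\Cat^{-1}$. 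Hence $\beta$ or $\gamma$ is invertible, so $\alpha$ is an atom.

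I expect no serious obstacle: once the ideal/factorization dictionary and left cancellation are in hand, both directions are short. The only point needing care is fixing the precise meaning of ``maximal'' (maximal among proper principal right ideals below $r(\alpha)\Cat$) and keeping range objects aligned, so that $\alpha\Cat\subseteq\beta\Cat$ really does come from a factorization $\alpha=\beta\gamma$ (which automatically yields $r(\beta)=r(\alpha)$). The length-function hypotheses intervene only through (LF1) and Lemma~\ref{lemma_1}, to guarantee that atoms are non-units and hence that the associated principal right ideal is proper.
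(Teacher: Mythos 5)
Your proof is correct. The paper itself gives no argument for this proposition (it simply cites \cite[Lemma 2.3]{LV2}), and your ideal/factorization dictionary argument --- properness of $\alpha\Cat$ iff $\alpha\notin\Cat^{-1}$ via (LF1), then the two implications via left cancellation --- is exactly the standard proof that the cited source uses, including the one genuinely necessary point of care: invoking Lemma \ref{lemma_1} and (LF1) to see that atoms are non-invertible, so that their principal right ideals are proper and hence legitimate candidates for maximality.
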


\begin{proposition}[{\cite[Proposition 2.6]{LV2}}]\label{prop_2.6}
 Let $\Cat$ be a LCSC, let $M$ be a conical atomic monoid, and let $\dmap:\Cat\to M$ be a length function satisfying (LF1) and (LF2), and let $B\subseteq \Cat_a$ be a transversal of generators of maximal principal right ideals. Then, $\Cat=B^*\Cat^{-1}$ where $B^*$ is the subcategory of $\Cat$ generated by $B$.
\end{proposition}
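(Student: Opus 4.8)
The plan is to combine the atomic factorisation of Lemma~\ref{lemma_3} with the transversal property of $B$, normalising an arbitrary atomic word by pushing every invertible factor to the right through an induction on the number of atoms. First I would dispose of the invertible elements: if $\alpha\in\Cat^{-1}$, then $\alpha=r(\alpha)\alpha$ with $r(\alpha)\in\Cat^0\subseteq B^*$ and $\alpha\in\Cat^{-1}$, so $\alpha\in B^*\Cat^{-1}$. It therefore remains to treat $\alpha\in\Cat\setminus\Cat^{-1}$, and for such $\alpha$ Lemma~\ref{lemma_3} gives a finite atomic factorisation $\alpha=\beta_1\cdots\beta_n$ with each $\beta_i\in\Cat_a$.

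The engine of the argument rests on two observations. First, for any atom $\beta$ the principal right ideal $\beta\Cat$ is maximal by \cite[Lemma 2.3]{LV2}, so there is a unique $b\in B$ with $b\Cat=\beta\Cat$, that is $b\approx\beta$; by the equivalences recalled in Section~1 this means $\beta\in b\Cat^{-1}$, hence $\beta=bg$ for some $g\in\Cat^{-1}$. Second, if $g\in\Cat^{-1}$ and $\beta'$ is an atom with $s(g)=r(\beta')$, then $g\beta'$ is again an atom: indeed $\dmap(g\beta')=\dmap(g)\dmap(\beta')=\dmap(\beta')$ by (LF1), and $\dmap(\beta')$ is an atom of $M$ by Lemma~\ref{lemma_1}, so $g\beta'$ is an atom of $\Cat$, again by Lemma~\ref{lemma_1}.

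With these in hand I would induct on $n$. For $n=1$ the first observation gives $\beta_1=b_1g$ with $b_1\in B$ and $g\in\Cat^{-1}$, so $\alpha\in B^*\Cat^{-1}$. For the inductive step write $\beta_1=b_1g_1$ as above; since $s(g_1)=s(\beta_1)=r(\beta_2)$ the product $g_1\beta_2$ is defined, and by the second observation it is an atom. Thus $\alpha=b_1(g_1\beta_2)\beta_3\cdots\beta_n$, where $(g_1\beta_2)\beta_3\cdots\beta_n$ is a product of $n-1$ atoms, and the induction hypothesis yields $(g_1\beta_2)\beta_3\cdots\beta_n=b_2\cdots b_n g$ with $b_2,\dots,b_n\in B$ and $g\in\Cat^{-1}$. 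Collecting, $\alpha=b_1b_2\cdots b_n g\in B^*\Cat^{-1}$; composability is preserved throughout because $r(g_1\beta_2)=r(g_1)=s(b_1)$, so $b_1$ precomposes the shorter word correctly.

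The only real care lies in the second observation and in the bookkeeping of the inductive step: one must verify that left-multiplying the next atom by the accumulated invertible still produces an atom, so that the induction hypothesis genuinely applies to a shorter atomic word, and that this left multiplication does not disturb the matching of objects. Everything else is routine and leans on left cancellativity only implicitly, through the lemmas already established.
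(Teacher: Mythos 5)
Your proof is correct and is essentially the argument the paper relies on: the paper proves this proposition by citing \cite[Proposition 2.6]{LV2}, whose proof is exactly this combination of atomic factorisation (Lemma~\ref{lemma_3}), replacement of each atom by a transversal element times an invertible via maximality of atomic principal right ideals, and the inductive normalisation that pushes invertibles to the right using the fact that an invertible times an atom is again an atom (via Lemma~\ref{lemma_1} and (LF1)). No gaps; the composability bookkeeping you flag is handled correctly.
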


The next step is to realize such a $\Cat$ as a Zappa-Sz\'ep product $B^*\Join \Cat^{-1}$. In order to do that we need to impose an extra condition in the category $\Cat$: 

\begin{definition}\label{Def:R-cond}
Let $B\subseteq \Cat_a$ be a transversal of generators of maximal principal right ideals, then we say that $\Cat$ satisfies the \emph{$\mathcal{R}$-condition} if whenever $\alpha=\beta g$ for $\alpha,\beta\in B^*$ and $g\in \Cat^{-1}$ we have that $g\in \Cat^0$. 
\end{definition}

\begin{proposition}[{\cite[Theorem 4.2]{LV2}}]\label{thm_ZP}
Let $\Cat$ be a LCSC, let $M$ be a conical atomic monoid, and let $\dmap:\Cat\to M$ be a length function satisfying (LF1) and (LF2), let $B\subseteq \Cat_a$ be a transversal of generators of maximal principal right ideals, and suppose that $\Cat$ satisfies the $\mathcal{R}$-condition. Then, every element of $\Cat$ has a unique representation as an element of $B^*\Cat^{-1}$. Thus, $\Cat$ is the Zappa-Sz\'ep product $B^*\Join \Cat^{-1}$. Moreover, if $\Cat$ satisfies the WFP, then the restriction of $\dmap$ to $B^*$ satisfies the UFP. 
\end{proposition}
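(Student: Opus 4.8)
The plan is to regard the decomposition $\Cat=B^*\Cat^{-1}$ supplied by Proposition \ref{prop_2.6} as an \emph{internal} factorization and to upgrade it to an external Zappa-Sz\'ep product once uniqueness has been secured. \emph{Existence} of a representation $\alpha=\beta g$ with $\beta\in B^*$ and $g\in\Cat^{-1}$ is exactly Proposition \ref{prop_2.6}, so the first genuine task is \emph{uniqueness}. Suppose $\beta g=\beta' g'$ with $\beta,\beta'\in B^*$ and $g,g'\in\Cat^{-1}$ (necessarily $s(g)=s(g')$); right-multiplying by $g^{-1}$ gives $\beta=\beta'(g'g^{-1})$ with $g'g^{-1}\in\Cat^{-1}$. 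The $\mathcal{R}$-condition (Definition \ref{Def:R-cond}) then forces $g'g^{-1}\in\Cat^0$, i.e. it is the identity $s(\beta')=s(\beta)$; hence $\beta=\beta'$, and since $g'=(g'g^{-1})g=g$ the representation is unique. In particular this already yields $B^*\cap\Cat^{-1}=\Cat^0$.

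Next I would build the category system whose Zappa-Sz\'ep product recovers $\Cat$, noting that $B^*$ is left cancellative (inherited from $\Cat$) and that $\dmap|_{B^*}$ is multiplicative, hence a length function. For $g\in\Cat^{-1}$ and $\beta\in B^*$ with $s(g)=r(\beta)$, the element $g\beta$ lies in $\Cat=B^*\Cat^{-1}$, so by Step~1 it factors uniquely as $g\beta=(g\cdot\beta)\,\varphi(g,\beta)$ with $g\cdot\beta\in B^*$ and $\varphi(g,\beta)\in\Cat^{-1}$; this \emph{defines} both an action $\phi\colon\Cat^{-1}\to\operatorname{PIso}(B^*,\dmap)$ and a map $\varphi$. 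Condition (LF1) gives $\dmap(g)=\dmap(\varphi(g,\beta))=\ideQ$, whence $\dmap(g\cdot\beta)=\dmap(\beta)$, so lengths are preserved; the remaining partial-isomorphism axioms (bijectivity with inverse $\phi(g^{-1})$, preservation of right ideals, and $g\cdot s(g)=r(g)$) follow from associativity and left cancellation in $\Cat$ together with the uniqueness of Step~1. The same bookkeeping gives the cocycle identity and conditions $(1)$--$(5)$ of Definition \ref{Def:CategoryCocycle}: each is obtained by expanding a product $g(\beta\delta)$ or $(gh)\beta$ in two ways and comparing the unique $B^*$- and $\Cat^{-1}$-parts. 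Once $(B^*,\dmap,\Cat^{-1},\varphi)$ is a category system, the map $(\beta,g)\mapsto\beta g$ is a bijection by Step~1 and a functor because $(g\cdot\beta)\varphi(g,\beta)=g\beta$, so $\Cat\cong B^*\Join^\varphi\Cat^{-1}$.

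For the final clause, note first that $B^*\cap\Cat^{-1}=\Cat^0$ (Step~1) means $B^*$ has no inverses: any $\alpha,\beta\in B^*$ with $\alpha\beta\in\Cat^0$ are invertible in $\Cat$, hence identities. Assuming the WFP on $\Cat$, I would transfer it to $B^*$. Given $\beta\in B^*$ with $\dmap(\beta)=\gamma_1\gamma_2$, apply the WFP in $\Cat$ to get $\beta=\alpha_1\alpha_2$ with $\dmap(\alpha_i)=\gamma_i$, factor each $\alpha_i=\beta_i g_i$ in $B^*\Cat^{-1}$, and absorb the middle invertible using the action: $\beta=\beta_1(g_1\cdot\beta_2)\cdot\big(\varphi(g_1,\beta_2)g_2\big)$, whose unique $B^*$-part $\beta_1(g_1\cdot\beta_2)$ is a factorization inside $B^*$ with the prescribed lengths (again by (LF1)). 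For uniqueness, if $\beta_1\beta_2=\beta_1'\beta_2'$ in $B^*$ with matching lengths, the WFP in $\Cat$ yields $\beta_1'=\beta_1 g$ with $g\in\Cat^{-1}$; the $\mathcal{R}$-condition gives $g\in\Cat^0$, so $\beta_1'=\beta_1$, and left cancellation gives $\beta_2'=\beta_2$. Thus $\dmap|_{B^*}$ satisfies the WFP with only trivial unit ambiguity, and since $B^*$ has no inverses this is the UFP by Remarks \ref{rema_WFP}(2). The main obstacle is precisely this last transfer: the factorizations produced by the WFP live a priori in $\Cat$, and the work is to show they descend to $B^*$ without length loss, which is exactly where the interplay between (LF1) (invertibles have trivial length), the action $g\cdot\beta$, and the $\mathcal{R}$-condition is indispensable.
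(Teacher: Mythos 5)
Your proposal is correct, but a caveat on the comparison: the paper never proves this proposition at all --- it is imported as \cite[Theorem 4.2]{LV2}, prefaced by the remark that the results of \cite[Section 2]{LV2} ``can easily be adapted'' to this context --- so there is no in-paper argument to measure yours against. What you have written is a complete, self-contained substitute, and it follows what is essentially the route one expects from \cite{LV2} (and, earlier, from the Levi-category decomposition of \cite{LW}): (i) existence of the decomposition is Proposition \ref{prop_2.6}, and uniqueness is exactly the $\mathcal{R}$-condition of Definition \ref{Def:R-cond} applied to $\beta=\beta'(g'g^{-1})$, which also yields $B^*\cap\Cat^{-1}=\Cat^0$; (ii) the action $g\cdot\beta$ and the cocycle $\varphi(g,\beta)$ are \emph{defined} by uniquely refactoring $g\beta$ inside $B^*\Cat^{-1}$, and the partial-isomorphism axioms, the cocycle identity and conditions $(1)$--$(5)$ of Definition \ref{Def:CategoryCocycle} all drop out of expanding $(gh)\beta$ and $g(\beta\delta)$ in two ways and comparing unique parts, so that $(\beta,g)\mapsto\beta g$ is an isomorphism $B^*\Join^\varphi\Cat^{-1}\cong\Cat$; (iii) the WFP descends to $B^*$ by factoring in $\Cat$, rewriting in $B^*\Cat^{-1}$-normal form and absorbing the invertibles through the action (length preservation coming from (LF1)), with uniqueness of the $B^*$-factorization again forced by the $\mathcal{R}$-condition, whereupon ``no inverses $+$ WFP $=$ UFP'' (Remarks \ref{rema_WFP}(2)) finishes the argument. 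Two small points if this is to stand as the written proof: in the uniqueness step of (iii), the WFP as stated compares an arbitrary factorization only against the \emph{distinguished} one, so to conclude $\beta_1'=\beta_1g$ you should route both $\beta_1$ and $\beta_1'$ through that distinguished factorization $\alpha_1\alpha_2$ (giving $\beta_1=\alpha_1g_1$, $\beta_1'=\alpha_1g_1'$, hence $g=g_1^{-1}g_1'$); and in (ii) the identity $\varphi(g^{-1},g\cdot\beta)=\varphi(g,\beta)^{-1}$, which is what makes $\phi(g^{-1})$ the inverse of $\phi(g)$ and hence each $\phi(g)$ a bijection, deserves to be displayed rather than folded into ``bookkeeping''. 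Neither is a gap; both are routine consequences of the uniqueness you established in Step~1.
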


Now let $\Cat$ be a LCSC, let $M$ be a conical atomic monoid, let $\dmap:\Cat\to M$ be a length function satisfying (LF1) and (LF2), let $\Grpd$ be a discrete groupoid acting on $\Cat$, and let  $(\Cat,\dmap,\Grpd,\varphi)$ be a category system. We can naturally extend $\dmap$ to a length function $\dmap: \Cat \Join^\varphi \Grpd\to M$ by $\dmap(\alpha,g)=\dmap(\alpha)$ that trivially satisfies conditions $(LF1)$ and $(LF2)$. Also, if $\Cat$ satisfies the WFP then so does $\Cat\Join^\varphi \Grpd$ by Proposition \ref{ZP_WFP}. Moreover, since any transversal of generators of maximal principal right ideals of $\Cat$ is also a transversal of maximal principal right ideals of $\Cat\Join^\varphi \Grpd$, it is straightforward to check that   $\Cat\Join^\varphi \Grpd$ satisfies $\mathcal{R}$-condition whenever $\Cat$ does. Finally, observe that the action of the groupoid $\Grpd$ on $\Cat$ restricts to an action of $\Cat^{-1}$ (Remark \ref{Rem:ConditionsAction}(2)) and hence $(\Cat^{-1},\dmap,\Grpd,\varphi)$ is a category system too. So, we can conclude

\begin{corollary}\label{corol_ZP_WFP}
	Let $\Cat$ be a LCSC, let $M$ be a conical atomic monoid,  let $\dmap:\Cat\to M$ be a length function, let  $\Grpd$ be a discrete groupoid acting on $\Cat$, let  $(\Cat,\dmap,\Grpd,\varphi)$ be a category system, let $B\subseteq \Cat_a$ be a transversal of generators of maximal principal right ideals of $\Cat$, and suppose that $\Cat$ satisfies the $\mathcal{R}$-condition. Then, $\Cat\Join^\varphi \Grpd$ is the Zappa-Sz\'ep product $B^*\Join (\Cat^{-1}\Join^\varphi \Grpd)$. Moreover, if $\Cat$ satisfies the WFP, then the restriction of $\dmap$ to $B^*$ satisfies the UFP. 
\end{corollary}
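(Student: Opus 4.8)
The plan is to recognize that the preceding paragraph has already reduced everything to a single application of Proposition \ref{thm_ZP}, this time to the category $\Cat \Join^\varphi \Grpd$ rather than to $\Cat$ itself, followed by the identification of the groupoid of invertibles supplied by Lemma \ref{lemma_inv}. Concretely, the extended map $\dmap(\alpha,g) = \dmap(\alpha)$ is a length function satisfying (LF1) and (LF2), the category $\Cat \Join^\varphi \Grpd$ is a LCSC by Proposition \ref{proposition_LC and FA}(1), and it inherits the $\mathcal{R}$-condition from $\Cat$; so the hypotheses of Proposition \ref{thm_ZP} will be met once the transversal is understood. The remaining work is therefore to transport the transversal $B$ and the subcategory $B^*$ faithfully from $\Cat$ into $\Cat \Join^\varphi \Grpd$.

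First I would pin down the atoms and the principal right ideals of $\Cat \Join^\varphi \Grpd$. Using the relation $(\alpha,g) \approx (\alpha, s(\alpha))$ recorded in the proof of Proposition \ref{proposition_LC and FA} together with the computation $(\alpha,g)(\Cat \Join^\varphi \Grpd) = \alpha\Cat{}_s\times_r \Grpd$ from part (2)(a), two elements $(\alpha,g)$ and $(\beta,h)$ generate the same principal right ideal exactly when $\alpha\Cat = \beta\Cat$. Since $\dmap(\alpha,g) = \dmap(\alpha)$, Lemma \ref{lemma_1} shows that $(\alpha,g)$ is an atom of $\Cat \Join^\varphi \Grpd$ if and only if $\alpha \in \Cat_a$; and as maximal principal right ideals are precisely those generated by atoms, the embedding $\beta \mapsto (\beta, s(\beta))$ carries $B$ to a transversal of generators of maximal principal right ideals of $\Cat \Join^\varphi \Grpd$ and identifies the generated subcategory $B^*$ inside $\Cat \Join^\varphi \Grpd$ with $B^*$ inside $\Cat$.

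With these checks in place, Proposition \ref{thm_ZP} applied to $\Cat \Join^\varphi \Grpd$ gives that every element has a unique representation in $B^*\,(\Cat \Join^\varphi \Grpd)^{-1}$, that is, $\Cat \Join^\varphi \Grpd = B^* \Join (\Cat \Join^\varphi \Grpd)^{-1}$. Finally, Lemma \ref{lemma_inv} identifies $(\Cat \Join^\varphi \Grpd)^{-1} = \Cat^{-1}{}_s\times_r \Grpd$, which is exactly $\Cat^{-1} \Join^\varphi \Grpd$ with composition inherited by restriction; here one uses the observation from Remark \ref{Rem:ConditionsAction}(2) that the action of $\Grpd$ restricts to $\Cat^{-1}$, so that $(\Cat^{-1}, \dmap, \Grpd, \varphi)$ is again a category system and the Zappa-Sz\'ep notation is meaningful. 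This yields $\Cat \Join^\varphi \Grpd = B^* \Join (\Cat^{-1} \Join^\varphi \Grpd)$. For the moreover clause, if $\Cat$ satisfies the WFP then so does $\Cat \Join^\varphi \Grpd$ by Proposition \ref{ZP_WFP}, and the last assertion of Proposition \ref{thm_ZP} then forces the restriction of $\dmap$ to $B^*$ to satisfy the UFP.

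The argument is essentially bookkeeping, and the only mild obstacle is the transport of $B$ and $B^*$ between $\Cat$ and $\Cat \Join^\varphi \Grpd$: one must confirm that the $\Grpd$-coordinate creates no new atoms and no new invertibles, and that $\approx$-classes match up correctly. Both points are controlled by the single fact that $\dmap$ ignores the groupoid component, so that atoms, invertibles, and maximal ideals in $\Cat \Join^\varphi \Grpd$ are governed entirely by the $\Cat$-coordinate.
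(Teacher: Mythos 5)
Your proposal is correct and follows essentially the same route as the paper: the paper's proof is precisely the paragraph preceding the corollary (extend $\dmap$ to $\Cat\Join^\varphi\Grpd$, note (LF1)/(LF2) and the $\mathcal{R}$-condition transfer, observe the transversal of $\Cat$ remains a transversal, invoke Proposition \ref{thm_ZP} together with Lemma \ref{lemma_inv} and Proposition \ref{ZP_WFP}). Your only addition is to spell out, via Lemma \ref{lemma_1} and the ideal computation $(\alpha,g)(\Cat\Join^\varphi\Grpd)=\alpha\Cat{}_s\times_r\Grpd$, the atom/transversal transport that the paper dismisses as ``straightforward to check,'' which is a welcome but not divergent elaboration.
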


\section{Properties of the tight groupoid associated to LCSC}

In this section we will extend the application of the results of \cite[Section 7 and 8]{OP_LCSC} to category systems.

First, we will recall the essential facts needed to understand what it follows.

\begin{definition}
	A semigroup $\Semi$ is an \emph{inverse semigroup} if for every $s\in\Semi$ there exists a unique $s^*\in\Semi$ such that $s=ss^*s$ and $s^*=s^*ss^*$.
\end{definition}
Equivalently, $\Semi$ is an inverse semigroup if and only if the subsemigroup $$\Idem:=\{e\in\Semi:e^2=e\}$$ of idempotents of $\Semi$ is commutative.

\begin{definition}
	Let $\Semi$ be an inverse semigroup, and let $\Idem$ be its subsemigroup of idempotents. Given $e,f\in\Idem$, we say that $e\leq f$ if and only if $e=ef$. We extend this relation to a partial order as follows: given $s,t\in\Semi$, we say that $s\leq t$ if and only if $s=ss^*t=ts^*s$.
\end{definition}

Given a LCSC $\Lambda$, we will define some inverse semigroups associated to $\Lambda$. First, we define the (symmetric) inverse semigroup on $\Cat$ as
$$\Inv(\Cat):=\{f:Y\to Z:Y,Z\subseteq X\text{ and }f \text{ is a bijection }\}\,,$$
endowed with operation 
$$g\circ f:f^{-1}(\ran(f)\cap \dom(g))\longrightarrow g(\ran(f)\cap \dom(g))\,,$$
and involution 
$$f^*:=f^{-1}:\ran(f)\longrightarrow \dom(f)\,.$$

Notice that $\Inv(\Cat)$ has unit $\text{Id}_\Cat:\Cat\to \Cat$ and zero being the empty map $0:\emptyset\to \emptyset$.

\begin{definition}
	Let $\Lambda$ be a LCSC. For any $\alpha\in\Lambda$, we define two elements of $\Inv(\Lambda)$:\begin{enumerate}
		\item  $\osh^\alpha:\alpha\Lambda\to s(\alpha)\Lambda$ given by $\alpha\beta \mapsto \beta\,,$
		\item  $\tau^\alpha:s(\alpha)\Lambda\to \alpha\Lambda$ given by $\beta \mapsto \alpha\beta\,.$
	\end{enumerate}
\end{definition}

\begin{definition}[{\cite[Lemma 2.3]{OP_LCSC}}]
	Given a finitely aligned  LCSC $\Lambda$, we define the inverse semigroup
	$$\Semi_\Cat:=\left\langle \osh^\alpha,\tau^\alpha:\alpha\in \Lambda\right\rangle \,.$$
\end{definition} 

Given $s\in \Semi_\Cat$ we denote by $\text{dom}(s)$ and $\text{ran(s)}$ the domain and the range of $s$ respectively.

\begin{definition}
	Let $\Lambda$ be a finitely aligned LCSC. A nonempty subset $F$ of $\Lambda$ is:\begin{enumerate}
		\item \emph{Hereditary}, if $\alpha\in\Lambda$, $\beta\in F$ and $\alpha\leq \beta$ implies $\alpha\in F$,
		\item \emph{(upwards) directed},  $\alpha,\beta\in F$ implies that there exists $\gamma\in F$ with $\alpha,\beta\leq \gamma$.
	\end{enumerate}
	We denote $\Lambda^*$ the set of nonempty, hereditary, directed subsets of $\Lambda$.
\end{definition}

\begin{definition} Given $\Cat$ a finitely aligned  LCSC, we say that $C\in\Lambda^*$ is \emph{maximal} if whenever $C\subset D$ with $D\in \Lambda^*$ we have that $D=\Lambda$.  We will denote $\Lambda^{**}:=\{C\in\Lambda^*:C\text{ is maximal}\}$.
\end{definition}

\begin{definition}\label{definition2_3_9}
	Let $\Cat$ be a finitely aligned  LCSC and $v\in\Cat^0$. We say that $C\in v\Cat^*$ is $\emph{tight}$ if  for every $\alpha\in C$  and every finite set $F$ of $\Lambda$ with $C\cap F=\emptyset$,  there exists $D\in\Cat^{**}$ with $\alpha\in D$ and $D\cap F=\emptyset$. We denote by $\Cat_{tight}$ the set of tight hereditary directed sets.
\end{definition}

For a finitely aligned  LCSC $\Cat$, in \cite[Defintion 3.21]{OP_LCSC} is defined a topology on $\Cat_{tight}$ that makes it a locally compact, Hausdorff and totally disconnected space. 
\begin{definition}
	Let $s=\elmap{\alpha}{\beta}\in \Semi_\Lambda$. Then, we define 
	$$E_\alpha:=\{C\in\Lambda^*: \alpha\in C \}\qquad\text{and}\qquad E_\beta:=\{C\in\Lambda^*: \beta\in C \}\,.$$
and a map $s:E_\beta \to E_\alpha$ given by 
$$s\cdot F=\bigcup_{\beta\leq \gamma,\,\gamma\in F}[\alpha\sigma^\beta (\gamma)]\,,$$
for every $F\in \Cat^*$. This action restricts to a continuous action on $\Cat_{tight}$ \cite[Corollary 4.7]{OP_LCSC}.
\end{definition}

Given $\Semi_\Cat\times \Cat_{tight}:=\{(s,F):F \cap \text{dom}(s)\neq \emptyset \}$, we define the following groupoid structure. Given $(s,F),(t,G)\in \Semi_\Cat\times \Cat_{tight}$:
\begin{enumerate}
	\item $d(s,F)=F$ and $r(s,F)=s\cdot F$,
	\item $(s,F)\cdot (t,G)$ is defined if $t\cdot G= F$, and then $(s,F)\cdot (t,G)=(st,G)$,
	\item $(s,F)^{-1}=(s^*,s\cdot F)$\,.
\end{enumerate} 
We say that $(s,F)\sim (t,G)$ if and only if $F=G$ and there exists $e\in \Idem[\Semi_\Cat]$ with $x\in \text{dom}(e)$ and $se=te$. This is an equivalence relation, compatible with the groupoid structure. Thus, we define \emph{the tight groupoid of $\Cat$} as $\G_{tight}(\Cat):=\Semi_\Cat\times \Cat_{tight}/\sim$, with the induced operations defined above, and the topology generated by the open sets of the form 
$$\Theta(s,U)=\{[s,F]:F\in U\cap \text{dom}(s)\}\,,$$
for $s\in \Semi_\Lambda$ and an open set $U\subseteq \Cat_{tight}$.
Then $\G_{tight}(\Cat)$ is   a locally compact 'etale groupoid. A nice description of $\G_{tight}(\Semi_\Lambda)$ is given in \cite[Lemma 4.8]{OP_LCSC}:
$$\G_{tight}(\Lambda)=\{[\elmap{\alpha}{\beta},F]: s(\alpha)=s(\beta),\, \beta\in F \}\,.$$

 Let $\Cat$ be a finitely aligned  LCSC with length function $\dmap:\Lambda\to\Gamma$, let $\Grpd$ be a discrete groupoid acting on $\Cat$, let $(\Cat,\dmap,\Grpd,\varphi)$ be a category system, and let $\Cat \Join^\varphi \Grpd$ be the associated Zappa- Sz\'ep product. Then observe that $\Cat$ can be identified with the full subcategory $\Cat \Join^\varphi \Grpdu$ of $\Cat \Join^\varphi \Grpd$, and hence $\Grpd_{tight}(\Cat)$ can be seen as an open subgroupoid of $\Grpd_{tight}(\Cat \Join^\varphi \Grpd)$.
  
 \begin{remark}\label{ZP_comp} Let $\Cat$ be a finitely aligned LCSC with length function $\dmap:\Lambda\to\Gamma$, let $\Grpd$ be a discrete groupoid acting on $\Cat$, let $(\Cat,\dmap,\Grpd,\varphi)$ be a category system, and let $\Cat \Join^\varphi \Grpd$ be the associated Zappa- Sz\'ep product.
 	\begin{enumerate}
 		\item Let $(\alpha,g),(\beta,f)\in \Cat \Join^\varphi \Grpd$,  and suppose that $(\alpha,g)\leq(\beta,f)$. Then,  there exists $(\delta,h)\in \Cat \Join^\varphi \Grpd $ with  $r(\delta,h)=s(\alpha,g)$, that is, $r(\delta)=g^{-1}\cdot s(\alpha)$ such that 
 	$$(\beta,f)=(\alpha,g)(\delta,h)=(\alpha(g\cdot \delta),\varphi(g,\delta)h)\,.$$
 	Hence,  $\alpha(g\cdot \delta)= \beta$ and $f=\varphi(g,\delta)h$. Thus, $g\cdot \delta=\sigma^\alpha(\beta)$ by left cancellation, and so $\delta=g^{-1}\cdot \sigma^\alpha(\beta)$ and $h=\varphi(g,g^{-1}\cdot\sigma^\alpha(\beta))^{-1}f=\varphi(g^{-1},\sigma^\alpha(\beta))f$ because of the cocycle identity. Therefore,  
 	$$(\alpha,g)\leq(\beta,f)\qquad\text{if and only if}\qquad \alpha\leq \beta\,,$$ and then we have that 
 	$$\sigma^{(\alpha,g)}(\beta,f)=(g^{-1}\cdot \sigma^\alpha(\beta),\varphi(g^{-1},\sigma^\alpha(\beta))f)\,.$$
 	The above observation, together with  Proposition \ref{proposition_LC and FA}(2)(a), shows that the map 
 	$$F\mapsto F\Join^\varphi \Grpd:=\{(\alpha,g)\in \Cat \Join^\varphi \Grpd:  \alpha\in F \} $$
 	is a bijection between $\Cat^*$ and $(\Cat \Join^\varphi \Grpd)^*$. This bijection clearly restricts to a bijection of their tight (maximal) hereditary upper-directed subsets, so we will identify $(\Cat \Join^\varphi \Grpd)_{tight}$ with $\Cat_{tight}$. 
 	\item  Let $(\alpha,g),(\beta,f)\in \Cat \Join^\varphi \Grpd$ with $s(\alpha,g)=s(\beta,f)$, and let $\elmap{(\alpha,g)}{(\beta,f)}\in \Semi_{\Cat \Join^\varphi \Grpd}$. Observe that, since  $(\beta,f)(\Cat \Join^\varphi \Grpd)=(\beta,s(f))(\Cat \Join^\varphi \Grpd)$, we have that $\elmap{(\alpha,g)}{(\beta,f)}=\elmap{(\alpha,gf^{-1})}{(\beta,s(f))}$. 
 	
 	\item Given $F\in \Cat_{tight}$ and $(\alpha,g),(\beta,f)\in \Cat \Join^\varphi \Grpd$ with $s(\alpha,g)=s(\beta,f)$, we have that 
 	$$\elmap{(\alpha,g)}{(\beta,f)}\cdot (F\Join^\varphi \Grpd)=F'\Join^\varphi \Grpd\,,$$
where $F':=\bigcup_{\beta\leq \gamma,\,\gamma\in F}[\alpha((gf^{-1})\cdot \sigma^\beta (\gamma))]\in \Cat_{tight}$.
 	 	\end{enumerate}
 \end{remark}

 First we will give necessary and sufficient conditions for $\Grpd_{tight}(\Cat \Join^\varphi \Grpd)$ to be a Hausdorff groupoid. 
 
 \begin{proposition}\label{ZP_hausddorff}
 Let $\Cat$ be a finitely aligned LCSC with length function $\dmap:\Lambda\to\Gamma$, let $\Grpd$ be a discrete groupoid acting on $\Cat$, let $(\Cat,\dmap,\Grpd,\varphi)$ be a category system, and let $\Cat \Join^\varphi \Grpd$ be the associated Zappa-Sz\'ep product.  Then, $\Grpd_{tight}(\Cat \Join^\varphi \Grpd)$ is a Hausdorff groupoid if and only if, given $\alpha,\beta\in \Cat$ and $g\in\Grpd$ with $g^{-1}\cdot s(\alpha)=s(\beta)$, there exists a finite subset  $H\subset s(\beta)\Cat$ such that 
 \begin{enumerate}
 \item $\alpha(g\cdot\gamma)=\beta\gamma$ and $\varphi(g,\gamma)=s(\gamma)$ for every $\gamma\in H$,
 \item for every $\delta\in s(\alpha)\Cat$ with $\alpha(g\cdot \delta)=\beta\delta$ and $\varphi(g,\delta)=s(\gamma)$, then $ \delta\Cap \gamma$ for some $\gamma\in H$.
 \end{enumerate}
\end{proposition}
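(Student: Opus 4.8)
The plan is to reduce the statement to the known characterization of Hausdorffness for tight groupoids of finitely aligned LCSCs, applied to the category $\Cat\Join^\varphi\Grpd$ (which is finitely aligned by Proposition \ref{proposition_LC and FA}(2)(b)), and then to translate the resulting condition back into the language of $\Cat$, $\Grpd$ and $\varphi$ via Remark \ref{ZP_comp}. Recall that $\Grpd_{tight}(\Lambda')$ of a finitely aligned LCSC $\Lambda'$ is Hausdorff precisely when, for every $s\in\Semi_{\Lambda'}$, the set of idempotents $e\le s$ admits a finite cover; concretely, writing $s=\tau^{\mu}\sigma^{\nu}$ with $s(\mu)=s(\nu)$, this amounts to a finite subset $H$ of the equalizer $\{\zeta\in s(\mu)\Lambda':\mu\zeta=\nu\zeta\}$ such that every $\delta$ in the equalizer satisfies $\delta\Cap\zeta$ for some $\zeta\in H$ (cf. \cite[Section 7]{OP_LCSC}). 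First I would record this criterion for $\Lambda'=\Cat\Join^\varphi\Grpd$.

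Next I would put a general element of $\Semi_{\Cat\Join^\varphi\Grpd}$ in normal form. By Remark \ref{ZP_comp}(2) every generator $\elmap{(\alpha,g)}{(\beta,f)}$ equals $\elmap{(\alpha,gf^{-1})}{(\beta,s(\beta))}$, so it suffices to treat $s=\elmap{(\alpha,g)}{(\beta,s(\beta))}$. Since $s((\alpha,g))=(g^{-1}\cdot s(\alpha),s(g))$ and $s((\beta,s(\beta)))=(s(\beta),s(\beta))$ by Definition \ref{definition_zappa-szed2}, the validity requirement $s(\mu)=s(\nu)$ reads $s(g)=s(\beta)=g^{-1}\cdot s(\alpha)$, i.e. it is exactly the hypothesis $g^{-1}\cdot s(\alpha)=s(\beta)$. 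This is why the statement quantifies precisely over the triples $(\alpha,\beta,g)$ appearing there.

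The central computation is to identify the equalizer of $s$. For $(\zeta,m)$ with $r(\zeta)=s(\beta)$ the element $s=\elmap{(\alpha,g)}{(\beta,s(\beta))}$ sends $(\beta\zeta,m)$ to $(\alpha(g\cdot\zeta),\varphi(g,\zeta)m)$, so $s$ fixes the idempotent supported on the principal right ideal $(\beta\zeta,m)(\Cat\Join^\varphi\Grpd)$ if and only if $\alpha(g\cdot\zeta)=\beta\zeta$ and $\varphi(g,\zeta)=s(\zeta)$, which is exactly condition (1) with $\zeta=\gamma$. I would then check that this fixing propagates to all extensions: from $\varphi(g,\zeta)=s(\zeta)$ together with Definition \ref{Def:CategoryCocycle}(3),(4),(5) one gets $g\cdot(\zeta\xi)=(g\cdot\zeta)\xi$ and $\varphi(g,\zeta\xi)=s(\xi)$ for $\xi\in s(\zeta)\Cat$, so $s$ restricts to the identity on the whole ideal. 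Hence, under the identification of Remark \ref{ZP_comp}(1), the idempotents $\le s$ correspond bijectively to $W:=\{\gamma\in s(\beta)\Cat : \alpha(g\cdot\gamma)=\beta\gamma,\ \varphi(g,\gamma)=s(\gamma)\}$.

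Finally I would feed $W$ into the criterion of the first paragraph. By Proposition \ref{proposition_LC and FA}(2)(a) the relation $\Cap$ between elements of $\Cat\Join^\varphi\Grpd$ lying over $\gamma,\delta\in\Cat$ is equivalent to $\gamma\Cap\delta$ in $\Cat$; thus a finite cover of the equalizer of $s$ is exactly a finite $H\subseteq W$ (this is (1)) such that every $\delta\in W$ satisfies $\delta\Cap\gamma$ for some $\gamma\in H$ (this is (2)). As $s$ ranges over all normalized generators, $(\alpha,\beta,g)$ ranges over the triples with $g^{-1}\cdot s(\alpha)=s(\beta)$, so the Hausdorffness criterion for $\Cat\Join^\varphi\Grpd$ becomes verbatim conditions (1)--(2), giving the claimed equivalence. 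I expect the main obstacle to be the equalizer computation of the third paragraph: one must handle the partiality of the action and the extra $\Grpd$-coordinate with care, verifying both that fixing at one level forces fixing on the entire principal ideal and that no constraint on the groupoid coordinate survives beyond $\varphi(g,\gamma)=s(\gamma)$.
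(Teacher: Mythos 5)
Your proposal is correct and follows essentially the same route as the paper's proof: invoke the finite-cover criterion for Hausdorffness of tight groupoids, normalize $s$ to the form $\elmap{(\alpha,g)}{(\beta,s(g))}$ via Remark \ref{ZP_comp}(2), identify the idempotents below $s$ with the set $\{\gamma\in s(\beta)\Cat:\alpha(g\cdot\gamma)=\beta\gamma,\ \varphi(g,\gamma)=s(\gamma)\}$, and translate nonvanishing products of idempotents into $\gamma\Cap\delta$ in $\Cat$ by left cancellation. In fact your third paragraph spells out a detail the paper leaves implicit, namely that $\varphi(g,\zeta)=s(\zeta)$ forces $s$ to act as the identity on the entire principal ideal, so the identification of idempotents below $s$ is legitimate.
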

\begin{proof}
According \cite[Theorem 3.16]{EP} the groupoid $\Grpd_{tight}(\Cat \Join^\varphi \Grpd)$ is Hausdorff if and only if for any $s\in \Semi_{\Cat \Join^\varphi \Grpd}$ the set
$$\mathcal{I}_s:=\{e\in \Idemp(\Cat \Join^\varphi \Grpd): e\leq s \}\,,$$
admits a finite cover, that is, a finite set $H\subseteq \mathcal{I}_s$ such that for every $e\in \mathcal{I}_s$ there exists $f\in H$ with $ef\neq 0$. Now, we will translate this equivalence to our language in terms of element of the category $\Cat$. By Remark \ref{ZP_comp}(2) we can assume without lost of generality that $s=\elmap{(\alpha,g)}{(\beta, s(g))}$. Observe that, given an idempotent of the form $e=\elmap{(\gamma,s(\gamma))}{(\gamma,s(\gamma))}$  for some $\gamma\in \Cat$, $es=se=s$ is equivalent to say that $\gamma=\alpha(g\cdot \sigma^\beta(\gamma))=\beta\sigma^\beta(\gamma)$ and $\varphi(g,\sigma^\beta(\gamma))=s(\gamma)$. Therefore, we can identify $\mathcal{I}_s$ with the set $$T_s:=\{\gamma\in r(\beta)\Cat: \alpha(g\cdot \gamma)=\beta\gamma\text{ and } \varphi(g,\gamma)=s(\gamma) \}\,.$$ Now, given $\gamma,\delta\in T_s$ we have that $$(\elmap{(\beta\gamma,s(\gamma))}{(\beta\gamma,s(\gamma))})(\elmap{(\beta\delta,s(\delta))}{(\beta\delta,s(\delta))} )\neq 0$$ 
if and only $\beta\gamma\Cap \beta \delta$, that by left cancellation is equivalent to $\gamma\Cap \delta$. Then, having a finite cover of $\mathcal{I}_s$ is equivalent to have a finite subset $H\subseteq T_s$ such that for every $\gamma\in T_s$ there exists $\delta\in H$  with $\gamma\Cap \delta$. 
\end{proof}

 \begin{corollary}\label{ZP_pseudofree_Hausdorff}
 	Let $\Cat$ be a finitely aligned LCSC with length function $\dmap:\Lambda\to\Gamma$ satisfying the WFP, let $\Grpd$ be a discrete groupoid acting on $\Cat$, let $(\Cat,\dmap,\Grpd,\varphi)$ be a category system, and let $\Cat \Join^\varphi \Grpd$ be the associated Zappa-Sz\'ep product. If $(\Cat,\dmap,\Grpd,\varphi)$ is pseudo-free, then $\G_{tight}(\Cat \Join^\varphi \Grpd)$ is a Hausdorff groupoid. 
 \end{corollary}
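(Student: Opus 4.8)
The plan is to verify the combinatorial criterion of Proposition \ref{ZP_hausddorff}: for arbitrary $\alpha,\beta\in\Cat$ and $g\in\Grpd$ with $g^{-1}\cdot s(\alpha)=s(\beta)$, I must produce a finite subset $H\subseteq s(\beta)\Cat$ satisfying conditions $(1)$ and $(2)$ of that proposition, i.e. a finite cover of
$$T_s:=\{\gamma\in s(\beta)\Cat:\ \alpha(g\cdot\gamma)=\beta\gamma\ \text{and}\ \varphi(g,\gamma)=s(\gamma)\}.$$
The whole point will be to show that pseudo-freeness forces $T_s$ to be either empty or all of $s(\beta)\Cat$, so that a one-element cover always suffices. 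If $T_s$ has no (nonzero) element there is nothing to prove, so I would fix $\gamma_0\in T_s$.

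First I would show that the mere existence of $\gamma_0$ forces $g\in\Grpdu$. Taking lengths in $\alpha(g\cdot\gamma_0)=\beta\gamma_0$, and using that $\phi(g)\in\PisoG$ preserves $\dmap$ together with cancellation in the group $Q$, gives $\dmap(\alpha)=\dmap(\beta)$. Thus $(\alpha,g\cdot\gamma_0)$ and $(\beta,\gamma_0)$ are two factorizations of the single element $\varepsilon:=\alpha(g\cdot\gamma_0)=\beta\gamma_0$ with matching lengths of the two factors. Applying the WFP to $\varepsilon$ and comparing both factorizations with the canonical one $\varepsilon=\varepsilon_1\varepsilon_2$ produces invertibles $a_2,b_2\in\Cat^{-1}$ with $g\cdot\gamma_0=a_2\varepsilon_2$ and $\gamma_0=b_2\varepsilon_2$; eliminating $\varepsilon_2$ yields $g\cdot\gamma_0=f\gamma_0$ with $f:=a_2b_2^{-1}\in\Cat^{-1}$. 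Since also $\varphi(g,\gamma_0)=s(\gamma_0)$ by definition of $T_s$, pseudo-freeness applies and gives $g\in\Grpdu$.

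Once $g$ is a unit the picture collapses. From $g^{-1}\cdot s(\alpha)=s(\beta)$ one reads off $g=s(\alpha)=s(\beta)$, the action of $g$ is trivial, and $\varphi(g,\gamma)=s(\gamma)$ holds automatically by Definition \ref{Def:CategoryCocycle}$(3)$; hence membership in $T_s$ degenerates to $\alpha\gamma=\beta\gamma$. For $\gamma_0$ this reads $\alpha\gamma_0=\beta\gamma_0$, and a second application of the WFP gives $\beta=\alpha g_1$, $\gamma_0=g_2\gamma_0$ with $g_1,g_2\in\Cat^{-1}$. Action-freeness (guaranteed by pseudo-freeness) forces $g_2=r(\gamma_0)$, and after left cancellation of $\alpha$ in $\alpha\gamma_0=\alpha g_1\gamma_0$ it forces $g_1=s(\alpha)$; therefore $\alpha=\beta$. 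Consequently $T_s=s(\beta)\Cat$, and $H:=\{s(\beta)\}$ does the job: condition $(1)$ holds because $\alpha=\beta$ puts $s(\beta)\in T_s$, and condition $(2)$ holds because every $\delta\in T_s$ satisfies $\delta\Cap s(\beta)$. With a finite cover produced in every case, Proposition \ref{ZP_hausddorff} gives that $\G_{tight}(\Cat\Join^\varphi\Grpd)$ is Hausdorff.

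The main obstacle, and the step I would be most careful about, is the twofold use of the WFP: first to manufacture the invertible $f$ with $g\cdot\gamma_0=f\gamma_0$ that feeds the pseudo-freeness hypothesis, and then (in the unit case) to upgrade $\alpha\gamma_0=\beta\gamma_0$ to $\alpha=\beta$. Both steps require tracking the sources and ranges of the invertibles produced by the WFP in order to be sure that the relevant products lie in the groupoid $\Cat^{-1}$ and that the cancellations are legitimate; everything else is bookkeeping resting on Remark \ref{ZP_comp}$(2)$ and the identification of $\mathcal{I}_s$ with $T_s$ already carried out in the proof of Proposition \ref{ZP_hausddorff}.
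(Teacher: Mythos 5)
Your proof is correct and follows essentially the same route as the paper's: taking lengths forces $\dmap(\alpha)=\dmap(\beta)$, the WFP yields an invertible $f\in\Cat^{-1}$ with $g\cdot\gamma_0=f\gamma_0$, pseudo-freeness then forces $g$ to be a unit, and right cancellation (which you re-derive inline via WFP plus action-freeness, where the paper simply cites Lemma \ref{pseudofree_cat_right_cancellative}) gives $\alpha=\beta$, so a singleton cover verifies Proposition \ref{ZP_hausddorff}. Your choice $H=\{s(\beta)\}$ is in fact slightly cleaner than the paper's $H=\{\alpha\}$, since $s(\beta)$ genuinely lies in $s(\beta)\Cat$ as the criterion requires.
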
  
\begin{proof}
Let $\alpha,\beta\in \Cat$ and $g^{-1}\cdot s(\alpha)=s(\beta)$, and suppose there exists $\gamma\in s(\beta)\Cat$ with $\alpha(g\cdot \gamma)=\beta\gamma$ and $\varphi(g,\gamma)=s(\gamma)$. Then, we have that 
$$\dmap(\alpha)\dmap(\gamma)=\dmap(\alpha)\dmap(g\cdot \gamma)=\dmap(\alpha(g\cdot \gamma))=\dmap(\beta)\dmap(\gamma)=\dmap(\beta)\dmap(\gamma)\,,$$
whence $\dmap(\alpha)=\dmap(\beta)$, and by the WFP there exists $f\in \Cat^{-1}$ such that $g\cdot \gamma=f\gamma$.  Since  $(\Cat,\dmap,\Grpd,\varphi)$ is pseudo-free, $g=s(\alpha)$, so $\alpha\gamma=\alpha(g\cdot \gamma)=\beta\gamma$. But by Lemma \ref{pseudofree_cat_right_cancellative} $\Cat$ is right cancellative, and so $\alpha=\beta$. Hence, taking $H=\{\alpha\}$ we can apply Proposition \ref{ZP_hausddorff}.
\end{proof}

Now, we are ready to characterize minimality and effectiveness for the corresponding tight groupoid. The following results are straightforward translations of the results in \cite[Section 6 \& 7]{OP_LCSC}.

\begin{proposition}\label{ZP_TP}
Let $\Cat$ be a finitely aligned LCSC with length function $\dmap:\Lambda\to\Gamma$, let $\Grpd$ be a discrete groupoid acting on $\Cat$, let $(\Cat,\dmap,\Grpd,\varphi)$ be a category system, and let $\Cat \Join^\varphi \Grpd$ be the associated Zappa-Sz\'ep product.  If $\G_{tight}(\Cat \Join^\varphi \Grpd)$ is Hausdorff or $\Cat_{tight}=\Cat^{**}$, then the following are equivalent:
\begin{enumerate}
\item $\G_{tight}(\Cat \Join^\varphi \Grpd)$ is topologically free,
\item Given $(\alpha,a),(\beta,b)\in \Cat \Join^\varphi \Grpd$ with $r(\alpha,a)=r(\beta,b)$ and $s(\alpha,a)=s(\beta,b)$, if $(\alpha,a)(\delta,d)\Cap (\beta,b)(\delta,d)$ for every $(\delta,d)\in s(\alpha,a)(\Cat \Join^\varphi \Grpd)$ then there exists $F\in\mathsf{FE}(s(\alpha,a))$ such that $(\alpha,a)(\gamma,d)=(\beta,b)(\gamma,d)$ for every $(\gamma,d)\in F$.
\item Given $\alpha,\beta\in \Cat$, $a,b\in \Grpd$ with $r(a)=s(\alpha)$, $r(b)=s(\beta)$, $r(\alpha)=r(\beta)$ and $a^{-1}\cdot s(\alpha)=b^{-1}\cdot s(\beta)$, if $\alpha (a\cdot \delta ) \Cap \beta(b\cdot \delta)$ for every $\delta\in (a^{-1}\cdot s(\alpha))\Lambda$ then there exists $F\in\mathsf{FE}(a^{-1}\cdot s(\alpha))$ such that $\alpha (a\cdot \gamma)=\beta(b\cdot\gamma)$ and $\varphi(a,\gamma)=\varphi(b,\gamma)$ for every $\gamma\in F$.
\end{enumerate}
\end{proposition}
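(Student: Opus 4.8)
The plan is to obtain the two displayed equivalences by different mechanisms: $(1)\Leftrightarrow(2)$ is the general characterization of topological freeness for the tight groupoid of a finitely aligned LCSC, applied \emph{verbatim} to the product category $\Cat\Join^\varphi\Grpd$, whereas $(2)\Leftrightarrow(3)$ is a dictionary translation that unpacks the composition, source and range maps of the Zappa-Sz\'ep product into statements about $\Cat$ and $\Grpd$ separately.

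For $(1)\Leftrightarrow(2)$ I would first observe that $\Cat\Join^\varphi\Grpd$ is itself a finitely aligned LCSC by Proposition \ref{proposition_LC and FA}, and that the standing hypothesis (Hausdorffness of the tight groupoid, or $\Cat_{tight}=\Cat^{**}$) transfers to it through the identification $(\Cat\Join^\varphi\Grpd)_{tight}\cong\Cat_{tight}$ recorded in Remark \ref{ZP_comp}(1). Under this hypothesis, the equivalence between topological freeness of $\G_{tight}(\Cat\Join^\varphi\Grpd)$ and the combinatorial condition $(2)$ is precisely the criterion established in \cite[Section 7]{OP_LCSC} for an arbitrary finitely aligned LCSC, here instantiated at $\Lambda=\Cat\Join^\varphi\Grpd$. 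Nothing new happens at this stage: condition $(2)$ is just that criterion written in the elements $(\alpha,a),(\beta,b)$ of the product.

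For $(2)\Leftrightarrow(3)$ the idea is to rewrite each clause of $(2)$ using the structure maps of Definition \ref{definition_zappa-szed2}. The hypotheses $r(\alpha,a)=r(\beta,b)$ and $s(\alpha,a)=s(\beta,b)$ become $r(\alpha)=r(\beta)$ and $a^{-1}\cdot s(\alpha)=b^{-1}\cdot s(\beta)$, recalling that $s(\alpha,a)=(a^{-1}\cdot s(\alpha),s(a))$; these match the side conditions of $(3)$. Every $(\delta,d)\in s(\alpha,a)(\Cat\Join^\varphi\Grpd)$ has $\delta\in(a^{-1}\cdot s(\alpha))\Cat$, and by the composition formula $(\alpha,a)(\delta,d)=(\alpha(a\cdot\delta),\varphi(a,\delta)d)$, so Proposition \ref{proposition_LC and FA}(2)(a) turns $(\alpha,a)(\delta,d)\Cap(\beta,b)(\delta,d)$ into $\alpha(a\cdot\delta)\Cap\beta(b\cdot\delta)$ in $\Cat$; since this depends only on $\delta$, the hypothesis of $(2)$ is exactly that of $(3)$. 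For the conclusions, $(\alpha,a)(\gamma,d)=(\beta,b)(\gamma,d)$ unfolds to $\alpha(a\cdot\gamma)=\beta(b\cdot\gamma)$ together with $\varphi(a,\gamma)d=\varphi(b,\gamma)d$; because $\Grpd$ is a groupoid, right cancellation of $d$ yields the extra clause $\varphi(a,\gamma)=\varphi(b,\gamma)$ that appears in $(3)$.

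The one genuinely non-cosmetic point, and the step I would treat most carefully, is the passage between the exhaustive families. In $(2)$ the set $F$ is required to satisfy $F\in\mathsf{FE}(s(\alpha,a))$ inside $\Cat\Join^\varphi\Grpd$, whereas $(3)$ asks for $H\in\mathsf{FE}(a^{-1}\cdot s(\alpha))$ inside $\Cat$. Here I would invoke Lemma \ref{lemma_FiniteCovers}, which says that $F$ is exhaustive at $s(\alpha,a)$ if and only if $H:=\{\gamma:(\gamma,d)\in F\text{ for some }d\}$ is exhaustive at $s(a)=a^{-1}\cdot s(\alpha)$; moreover, since $(\gamma,d)\approx(\gamma,s(\gamma))$, one may assume $F=H{}_s\times_r\Cat^0$, which renders the choice of $d$ immaterial and lets both the hypothesis and the conclusion be phrased purely in terms of $\gamma$. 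Tracking this reduction, and verifying that the second (groupoid) coordinate contributes \emph{exactly} the condition $\varphi(a,\gamma)=\varphi(b,\gamma)$ and nothing more, is where the bookkeeping must be done honestly; the remainder is a direct substitution of the Zappa-Sz\'ep formulas.
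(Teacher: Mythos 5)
Your proposal is correct and follows exactly the route the paper intends: the paper offers no detailed argument, stating only that this result is a ``straightforward translation'' of the topological freeness criterion of \cite[Sections 6 \& 7]{OP_LCSC} applied to the finitely aligned LCSC $\Cat\Join^\varphi\Grpd$ (via Proposition \ref{proposition_LC and FA}), and your write-up supplies precisely that translation, correctly invoking Remark \ref{ZP_comp}(1) for the identification of tight spaces, Proposition \ref{proposition_LC and FA}(2)(a) for the $\Cap$ relation, and Lemma \ref{lemma_FiniteCovers} for matching the finite exhaustive sets.
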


\begin{proposition}\label{ZP_Min}
Let $\Cat$ be a finitely aligned  LCSC with length function $\dmap:\Lambda\to\Gamma$, let $\Grpd$ be a discrete groupoid acting on $\Cat$, let $(\Cat,\dmap,\Grpd,\varphi)$ be a category system, and let $\Cat \Join^\varphi \Grpd$ be the associated Zappa-Sz\'ep product.   If $\G_{tight}(\Cat \Join^\varphi \Grpd)$ is Hausdorff or $\Cat_{tight}=\Cat^{**}$, then the following are equivalent:
\begin{enumerate}
\item $\G_{tight}(\Cat \Join^\varphi \Grpd)$ is minimal.
\item For every $(\alpha,a),(\beta,b)\in \Lambda\Join^\varphi \Grpd$ there exists $F\in \mathsf{FE}(\alpha,a)$ such that for each $(\gamma,g)\in F$, $s(\beta,b)(\Lambda\Join^\varphi G) s(\gamma,g)\neq \emptyset$.
\item For every $\alpha,\beta\in \Lambda$ there exists $F\in \mathsf{FE}(\alpha)$ such that for each $\gamma\in F$,  there exists $g\in \Grpd$  with $s(\beta)\Lambda  (g\cdot s(\gamma))\neq \emptyset$.
\end{enumerate}
\end{proposition}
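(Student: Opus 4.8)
The plan is to get $(1)\Leftrightarrow(2)$ essentially for free from the general minimality criterion and to carry out the real work in the purely computational translation $(2)\Leftrightarrow(3)$. For $(1)\Leftrightarrow(2)$ I would invoke the minimality characterization for finitely aligned LCSC of \cite{OP_LCSC} applied to the category $\Cat\Join^\varphi\Grpd$ itself. By Proposition~\ref{proposition_LC and FA} this category is finitely aligned, and by Remark~\ref{ZP_comp}(1) the bijection $F\mapsto F\Join^\varphi\Grpd$ identifies $(\Cat\Join^\varphi\Grpd)_{tight}$ with $\Cat_{tight}$ and $(\Cat\Join^\varphi\Grpd)^{**}$ with $\Cat^{**}$; hence the standing hypothesis (Hausdorff or $\Cat_{tight}=\Cat^{**}$) is exactly the hypothesis of that criterion, and its combinatorial condition for $\Cat\Join^\varphi\Grpd$ is word-for-word statement~$(2)$. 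No extra argument is needed here beyond these identifications.

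For $(2)\Leftrightarrow(3)$ I would first reduce the finite exhaustive sets. Since $(\alpha,a)(\Cat\Join^\varphi\Grpd)=\alpha\Cat\,{}_s\times_r\Grpd$ and, by Proposition~\ref{proposition_LC and FA}(2)(a), $(\gamma,g)\Cap(\zeta,c)$ iff $\gamma\Cap\zeta$, one checks that $F\in\mathsf{FE}(\alpha,a)$ if and only if $H:=\{\gamma:(\gamma,g)\in F\text{ for some }g\}\in\mathsf{FE}(\alpha)$, the correspondence being finite-to-finite and independent of $a$. By Remark~\ref{ZP_comp}(1) I may moreover normalise each member of $F$ to the shape $(\gamma,s(\gamma))$, so that its source is the unit $(s(\gamma),s(\gamma))$. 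It then remains to unwind the connectivity clause: $s(\beta,b)(\Cat\Join^\varphi\Grpd)s(\gamma,s(\gamma))\neq\emptyset$ holds exactly when there are $\eta\in s(b)\Cat$ and $h\in\Grpd$ with $r(h)=s(\eta)$ and $s(h)=s(\gamma)$, whereas clause~$(3)$, after noting $s(\beta)=r(b)$ and $g\cdot s(\gamma)=r(g)$ when $s(g)=s(\gamma)$, asks for $g\in\Grpd$ with $s(g)=s(\gamma)$ together with $\zeta\in s(\beta)\Cat$ with $s(\zeta)=r(g)$.

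The main obstacle, and the crux of the proof, is to bridge these two connectivity statements across the groupoid element $b$, which separates the object $s(b)$ occurring in $(2)$ from the object $s(\beta)=r(b)$ occurring in $(3)$. I would transport morphisms of $\Cat$ through the partial action: given a witness $(\eta,h)$ for $(2)$, set $\zeta:=b\cdot\eta$ and $g:=\varphi(b,\eta)\,h$. Then $\zeta\in r(b)\Cat=s(\beta)\Cat$ by Proposition~\ref{Prop:left cancellative small category}(1), while by Proposition~\ref{Prop:left cancellative small category}(2) and Definition~\ref{Def:CategoryCocycle}(2) the composite $\varphi(b,\eta)h$ is well-defined with $s(g)=s(h)=s(\gamma)$ and $r(g)=r(\varphi(b,\eta))=s(b\cdot\eta)=s(\zeta)$, so $(3)$ holds; the reverse implication is symmetric, applying $b^{-1}$ and setting $\eta:=b^{-1}\cdot\zeta$, $h:=\varphi(b^{-1},\zeta)\,g$. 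The delicate point throughout is the bookkeeping showing that each composite in $\Grpd$ is legitimate and carries the prescribed source and range, which is exactly where the cocycle identity and the source/range formulas of Definition~\ref{Def:CategoryCocycle} and Proposition~\ref{Prop:left cancellative small category} are needed. Since all of these reductions are independent of $a$ and $b$, the per-pair equivalence upgrades to $(2)\Leftrightarrow(3)$ in full.
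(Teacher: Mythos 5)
Your proposal is correct and is essentially the paper's own argument: the paper offers no written proof beyond the remark that Propositions \ref{ZP_TP} and \ref{ZP_Min} are ``straightforward translations'' of the results of \cite[Sections 6 \& 7]{OP_LCSC}, and your proof is precisely that translation --- $(1)\Leftrightarrow(2)$ by applying the minimality criterion of \cite{OP_LCSC} to the finitely aligned category $\Cat\Join^\varphi\Grpd$ (with Proposition \ref{proposition_LC and FA} and Remark \ref{ZP_comp}(1) matching the hypotheses), and $(2)\Leftrightarrow(3)$ via the exhaustive-set reduction of Lemma \ref{lemma_FiniteCovers} together with transporting witnesses through $b$ and $b^{-1}$ using the cocycle. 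Your source/range bookkeeping via Definition \ref{Def:CategoryCocycle} and Proposition \ref{Prop:left cancellative small category} checks out, so the details you supply are a faithful completion of what the paper leaves implicit.
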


Thus, we conclude with the following result.

\begin{theorem}\label{theorem_simpleSystemnou}
Let $\Cat$ be a finitely aligned  LCSC with length function $\dmap:\Lambda\to\Gamma$, let $\Grpd$ be a discrete groupoid acting on $\Cat$. Let $(\Cat,\dmap,\Grpd,\varphi)$ be a category system and $\Cat \Join^\varphi \Grpd$ be the associated Zappa-Sz\'ep product.   If $\G_{tight}(\Cat \Join^\varphi \Grpd)$ is Hausdorff or $\Cat_{tight}=\Cat^{**}$, then the following are equivalent:
\begin{enumerate}
\item $C^*_r(\G_{tight}(\Cat \Join^\varphi \Grpd))$ (the reduced groupoid $C^*$-algebra) is simple.
\item For any field $K$, $K(\Lambda\Join^\varphi  G)$ (the Steinberg algebra) is simple.
\item The following properties hold:
\begin{enumerate}
\item Given $\alpha,\beta\in \Lambda$, $a,b\in \Grpd$ with $r(\alpha)=r(b)$ and $a^{-1}\cdot s(\alpha)=b^{-1}\cdot s(\beta)$, if $\alpha (a\cdot \delta ) \Cap \beta(b\cdot \delta)$ for every $\delta\in (a^{-1}\cdot s(\alpha))\Lambda$ then there exists $F\in\mathsf{FE}(a^{-1}\cdot s(\alpha))$ such that $\alpha (a\cdot \gamma)=\beta(b\cdot\gamma)$ and $\varphi(a,\gamma)=\varphi(b,\gamma)$ for every $\gamma\in F$.
\item For every $\alpha,\beta\in \Lambda$ there exists $F\in \mathsf{FE}(\alpha)$ such that for each $\gamma\in F$,  there exists $g\in \Grpd$  with $s(\beta)\Lambda  (g\cdot s(\gamma))\neq \emptyset$.
\end{enumerate}
\end{enumerate}
\end{theorem}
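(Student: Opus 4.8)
The plan is to deduce both equivalences $(1)\Leftrightarrow(3)$ and $(2)\Leftrightarrow(3)$ from the standard simplicity dichotomy for algebras of \'etale groupoids, feeding in the combinatorial reformulations already obtained in Propositions \ref{ZP_TP} and \ref{ZP_Min}. The governing principle is that, for the class of groupoids at hand, both the reduced groupoid $C^*$-algebra and the Steinberg algebra over an arbitrary field are simple exactly when the groupoid is \emph{minimal} and \emph{effective}. Consequently the whole argument comes down to matching conditions $(3)(a)$ and $(3)(b)$ with effectiveness and minimality of $\G_{tight}(\Cat\Join^\varphi\Grpd)$, respectively, so that $(3)$ becomes literally the statement ``minimal and effective''.

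First I would invoke the two simplicity criteria. For $C^*_r(\G_{tight}(\Cat\Join^\varphi\Grpd))$, simplicity is equivalent to the conjunction of minimality and effectiveness; for the Steinberg algebra $K(\Cat\Join^\varphi\Grpd)$ over any field $K$ the same equivalence holds. Under the hypothesis that $\G_{tight}(\Cat\Join^\varphi\Grpd)$ is Hausdorff these are the classical statements and can be quoted directly; when instead $\Cat_{tight}=\Cat^{**}$ I would argue exactly as in the corresponding single-category results of \cite[Sections 7 \& 8]{OP_LCSC}, where that equality plays the role of the regularity hypothesis substituting for Hausdorffness. In either case one also uses that, for these tight groupoids, effectiveness coincides with topological freeness, so that the notion appearing in Proposition \ref{ZP_TP} is the one needed for the simplicity criterion.

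Next I would substitute the combinatorial descriptions. By Proposition \ref{ZP_TP}, topological freeness (hence effectiveness) of $\G_{tight}(\Cat\Join^\varphi\Grpd)$ is equivalent to condition $(3)(a)$; by Proposition \ref{ZP_Min}, minimality of $\G_{tight}(\Cat\Join^\varphi\Grpd)$ is equivalent to condition $(3)(b)$. Since $(3)$ is precisely the conjunction of $(3)(a)$ and $(3)(b)$, statement $(3)$ is equivalent to ``$\G_{tight}(\Cat\Join^\varphi\Grpd)$ is minimal and effective''. Combining this with the two simplicity criteria yields $(1)\Leftrightarrow(3)$ and $(2)\Leftrightarrow(3)$ simultaneously, and therefore $(1)\Leftrightarrow(2)\Leftrightarrow(3)$, as asserted.

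The step I expect to be the main obstacle is the non-Hausdorff case governed by $\Cat_{tight}=\Cat^{**}$. In the Hausdorff setting the argument is a transparent concatenation of off-the-shelf groupoid simplicity theorems with Propositions \ref{ZP_TP} and \ref{ZP_Min}. The delicate part is verifying that the equality $\Cat_{tight}=\Cat^{**}$ supplies enough control over the tight spectrum for the simplicity dichotomy to remain valid without Hausdorffness, faithfully mirroring the treatment of the analogous statement in \cite{OP_LCSC}. A minor but necessary check is that the hypotheses on $\alpha,\beta,a,b$ appearing in $(3)(a)$ and $(3)(b)$ coincide verbatim with the specializations of Propositions \ref{ZP_TP}(3) and \ref{ZP_Min}(3), so that the passage introduces no spurious or missing constraints on the data.
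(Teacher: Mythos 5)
Your proposal is correct and takes essentially the same approach as the paper: the paper offers no separate argument for this theorem, presenting it as the immediate conclusion of Propositions \ref{ZP_TP} and \ref{ZP_Min} combined with the standard simplicity criteria for reduced $C^*$-algebras and Steinberg algebras of ample \'etale groupoids (simple if and only if minimal and effective/topologically free, with the case $\Cat_{tight}=\Cat^{**}$ handled as in \cite{OP_LCSC}), which is precisely your argument. The only thing your ``verbatim check'' would turn up is a typo in the theorem's condition (3)(a) --- ``$r(\alpha)=r(b)$'' should read ``$r(\alpha)=r(\beta)$'' as in Proposition \ref{ZP_TP}(3) --- which is a defect of the statement, not a gap in your proof.
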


\section{Amenable groupoids}

Now, we will discuss the amenability of $\G_{tight}(\Cat \Join^\varphi \Grpd)$, following the same strategy used in \cite[Section 8]{OP_LCSC}.

Recall that $\Gamma$ is a submonoid of a group $Q$  with unit element $\ideQ$, and such that  $\Gamma\cap\Gamma^{-1}=\{\ideQ\}$. Then,  given two elements $\gamma_1,\gamma_2\in \Gamma$, 
$$\gamma_1\leq \gamma_2\qquad\text{if and only if}\qquad \gamma_1^{-1}\gamma_2 \in\Gamma\,.$$

\begin{lemma}[{c.f.\cite[Lemma 8.2]{OP_LCSC}}]\label{lemma_equality}
	Let $\Cat$ be a LCSC with length function $\dmap:\Lambda\to\Gamma$ satisfying the WFP. Let $\alpha,\beta\in \Lambda $ with $ \alpha\Cap \beta$. Then  $\alpha\leq \beta$ if and only if $\dmap(\alpha)\leq \dmap(\beta)$. In particular $\alpha\approx\beta$ whenever $\dmap(\alpha)=\dmap(\beta)$.
\end{lemma}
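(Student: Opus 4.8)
The forward implication is immediate and uses neither the common extension hypothesis nor the WFP: if $\alpha\leq\beta$, write $\beta=\alpha\gamma$; then $\dmap(\beta)=\dmap(\alpha)\dmap(\gamma)$, so $\dmap(\alpha)^{-1}\dmap(\beta)=\dmap(\gamma)\in\Gamma$, which is exactly $\dmap(\alpha)\leq\dmap(\beta)$.

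For the converse the plan is to exploit a common extension together with the uniqueness part of the WFP. Since $\alpha\Cap\beta$, I would fix $\varepsilon\in\alpha\Cat\cap\beta\Cat$ and write $\varepsilon=\alpha\mu=\beta\nu$. Set $\xi:=\dmap(\alpha)^{-1}\dmap(\beta)$, which lies in $\Gamma$ precisely because $\dmap(\alpha)\leq\dmap(\beta)$. Comparing the two expressions for $\dmap(\varepsilon)$ inside the ambient group $Q$ and cancelling $\dmap(\alpha)$ yields $\dmap(\mu)=\xi\dmap(\nu)$; this bookkeeping in $Q$ is what will let the two factorizations of $\varepsilon$ be matched afterwards.

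Next I would split $\dmap(\varepsilon)=\dmap(\alpha)\cdot\xi\cdot\dmap(\nu)$ and apply the WFP twice to produce a three-term factorization $\varepsilon=p_1p_2p_3$ with $\dmap(p_1)=\dmap(\alpha)$, $\dmap(p_2)=\xi$ and $\dmap(p_3)=\dmap(\nu)$. Grouping this as $(p_1)(p_2p_3)$ and comparing with $\alpha\mu$ (both first factors of length $\dmap(\alpha)$, both second factors of length $\xi\dmap(\nu)$), the uniqueness clause of the WFP produces invertibles identifying the first factors, so $\alpha\approx p_1$. Grouping instead as $(p_1p_2)(p_3)$ and comparing with $\beta\nu$ (first factors of length $\dmap(\beta)=\dmap(\alpha)\xi$, second factors of length $\dmap(\nu)$) gives, via the universal WFP factorization for that length data, $\beta\approx p_1p_2$. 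Finally, since $p_1\leq p_1p_2$ trivially, I would chain $\alpha\approx p_1\leq p_1p_2\approx\beta$ through the characterizations of $\approx$: writing $\alpha=p_1u$ and $\beta=p_1p_2v$ with $u,v\in\Cat^{-1}$ gives $\beta=\alpha(u^{-1}p_2v)$, i.e. $\alpha\leq\beta$.

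The ``in particular'' then follows by symmetry: if $\dmap(\alpha)=\dmap(\beta)$ then $\dmap(\alpha)\leq\dmap(\beta)$ and $\dmap(\beta)\leq\dmap(\alpha)$, and since $\Cap$ is symmetric the equivalence just proved gives both $\alpha\leq\beta$ and $\beta\leq\alpha$, i.e. $\alpha\approx\beta$. I expect the only delicate point to be the repeated application of the WFP uniqueness clause: one must check that the competing factorizations genuinely share the prescribed length data (so the clause applies), pass through the universal factorization when comparing two non-canonical factorizations, and keep track of the source/range conditions so that composites such as $u^{-1}p_2v$ are well defined. Everything else is routine.
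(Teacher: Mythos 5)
Your proof is correct, and it is essentially the argument that the paper relies on (the paper gives no proof of this lemma, deferring to \cite[Lemma 8.2]{OP_LCSC}): take a common extension $\varepsilon=\alpha\mu=\beta\nu$, factor according to the prescribed length data, and use the uniqueness clause of the WFP (passing through the canonical factorization) to match factors up to elements of $\Cat^{-1}$, with the bookkeeping of sources and ranges exactly as you describe. The only cosmetic difference is that you build a three-term factorization of $\varepsilon$ and invoke the uniqueness clause twice, where one invocation suffices: since $\dmap(\beta)=\dmap(\alpha)\xi$ with $\xi\in\Gamma$, apply the WFP to $\beta$ itself to write $\beta=\beta_1\beta_2$ with $\dmap(\beta_1)=\dmap(\alpha)$, then compare the two factorizations $\varepsilon=\alpha\mu=\beta_1(\beta_2\nu)$, whose length data agree, to get $\alpha\approx\beta_1$ and hence $\alpha\leq\beta_1\beta_2=\beta$.
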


\begin{lemma}[{c.f.\cite[Lemma 8.5]{OP_LCSC}}]\label{ZP_cocyle}
	Let $\Cat$ be a finitely aligned  LCSC with length function $\dmap:\Lambda\to\Gamma$ satisfying the WFP, let $\Grpd$ be a discrete groupoid acting on $\Cat$, let $(\Cat,\dmap,\Grpd,\varphi)$ be a category system, and let $\Cat \Join^\varphi \Grpd$ be the associated Zappa- Sz\'ep product. Then, the map
	$$\dbar:\G_{tight}(\Cat \Join^\varphi \Grpd)\to Q\,,\qquad [\elmap{(\alpha,a)}{(\beta,b)},F \Join^\varphi \Grpd]\mapsto \dmap(\alpha)\dmap(\beta)^{-1}\,,$$
	is a well defined continuous groupoid homomorphism. In particular, $\dbar$ restricts to $\G_{tight}(\Cat)$.
\end{lemma}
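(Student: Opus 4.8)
The plan is to check the three required properties—well-definedness, the cocycle (homomorphism) identity, and continuity—while exploiting that the formula for $\dbar$ sees the $\Grpd$-coordinates only through $\dmap$, and that by Proposition~\ref{ZP_WFP} the extended length function $\dmap(\alpha,a)=\dmap(\alpha)$ on $\Cat\Join^\varphi\Grpd$ again satisfies the WFP. In effect $\dbar$ is the generic degree cocycle attached to the finitely aligned LCSC $\Cat\Join^\varphi\Grpd$, so the argument transcribes that of \cite[Lemma 8.5]{OP_LCSC}; the final restriction to $\G_{tight}(\Cat)$ is then immediate from the open embedding $\Cat=\Cat\Join^\varphi\Grpdu$ noted above.

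For well-definedness, suppose $[\elmap{(\alpha,a)}{(\beta,b)},F\Join^\varphi\Grpd]=[\elmap{(\alpha',a')}{(\beta',b')},F'\Join^\varphi\Grpd]$. Equality of domains together with the bijection of Remark~\ref{ZP_comp}(1) forces $F=F'$, while equality of germs combined with the upward-directedness of $F$ yields some $(\gamma,c)\in F\Join^\varphi\Grpd$ with $(\beta,b),(\beta',b')\le(\gamma,c)$—equivalently $\beta,\beta'\le\gamma$—satisfying
$$(\alpha,a)\,\sigma^{(\beta,b)}(\gamma,c)\ \approx\ (\alpha',a')\,\sigma^{(\beta',b')}(\gamma,c)\,.$$
Computing first coordinates via Remark~\ref{ZP_comp}(1) and using that every $\phi(h)\in\PisoG$ preserves $\dmap$ (condition~(4) of Definition~\ref{Def: Partial_Isomorphism}), the $\dmap$-value of each side equals $\dmap(\alpha)\dmap(\beta)^{-1}\dmap(\gamma)$, respectively $\dmap(\alpha')\dmap(\beta')^{-1}\dmap(\gamma)$. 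Since $\approx$ differs by a right factor in $\Cat^{-1}$ and $\dmap^{-1}(\ideQ)=\Cat^{-1}$ by Remarks~\ref{rema_WFP}(1), the two values coincide, and cancelling $\dmap(\gamma)$ in the group $Q$ gives $\dmap(\alpha)\dmap(\beta)^{-1}=\dmap(\alpha')\dmap(\beta')^{-1}$.

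For the homomorphism identity I would take two composable germs $[\elmap{(\alpha,a)}{(\beta,b)},F\Join^\varphi\Grpd]$ and $[\elmap{(\mu,m)}{(\nu,n)},G\Join^\varphi\Grpd]$ (the latter composed first) and, exactly as above, refine their representatives by passing to a sufficiently large common element of the bottom filter $G$, so that the product in $\Semi_{\Cat\Join^\varphi\Grpd}$ reaches a standard form $[\elmap{(\zeta,z)}{(\eta,e)},G'\Join^\varphi\Grpd]$. The finite alignment of $\Cat\Join^\varphi\Grpd$ (Proposition~\ref{proposition_LC and FA}) guarantees that such a refinement exists, and Lemma~\ref{lemma_equality} identifies the relevant first coordinates up to $\approx$. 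Tracking $\dmap$ through this reduction—again discarding the groupoid coordinates, which leave $\dmap$ unchanged—the three degrees telescope to $\dmap(\zeta)\dmap(\eta)^{-1}=\big(\dmap(\alpha)\dmap(\beta)^{-1}\big)\big(\dmap(\mu)\dmap(\nu)^{-1}\big)$, i.e.\ $\dbar$ of the product equals the product of the $\dbar$-values. Continuity is then automatic: on each basic open set $\Theta(s,U)$ the semigroup element $s=\elmap{(\alpha,a)}{(\beta,b)}$ is fixed, so $\dbar$ is constantly $\dmap(\alpha)\dmap(\beta)^{-1}$ there; being locally constant into the discrete group $Q$, it is continuous.

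The main obstacle is the homomorphism step, where one must combine the inverse-semigroup multiplication in $\Semi_{\Cat\Join^\varphi\Grpd}$ with the Zappa--Sz\'ep twisting (the action $g\cdot(-)$ and the cocycle $\varphi$) in order to bring a product of germs into standard form. The saving grace is structural: because $\dmap(\alpha,a)=\dmap(\alpha)$ and each $\phi(h)$ preserves $\dmap$, all of the $\varphi$- and action-induced twisting is invisible to $\dmap$, so the degree bookkeeping collapses to the untwisted telescoping already carried out in \cite[Lemma 8.5]{OP_LCSC}.
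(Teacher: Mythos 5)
Your argument is correct and is essentially the paper's own proof: the paper simply declares the proof to be verbatim that of \cite[Lemma 8.5]{OP_LCSC} with \cite[Remark 8.4]{OP_LCSC} replaced by Remark \ref{ZP_comp}(1), \cite[Lemma 8.2]{OP_LCSC} replaced by Lemma \ref{lemma_equality}, and the observation that the WFP gives $\dmap^{-1}(\ideQ)=\Cat^{-1}$ (Remark \ref{rema_WFP}(1)), and these are exactly the ingredients you use (germ refinement via directedness, invisibility of the $\Grpd$-coordinate and the $\varphi$-twisting to $\dmap$, telescoping of degrees, and local constancy into the discrete group $Q$ for continuity). You have merely written out the details that the paper leaves to the citation.
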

 \begin{proof}
 	The  proof is verbatim that of \cite[Lemma 8.5]{OP_LCSC}, replacing \cite[Remark 8.4]{OP_LCSC} by Remark \ref{ZP_comp}(1) and \cite[Lemma 8.2]{OP_LCSC} by Lemma \ref{lemma_equality}, with the observation that, by the WFP, we have $\dmap^{-1}(\ideQ)=\Cat^{-1}$ (Remark \ref{rema_WFP}(1)).
 \end{proof}

Let $\dbar:\Grpd_{tight}(\Cat \Join^\varphi \Grpd)\to Q$ be the cocycle defined in Lemma \ref{ZP_cocyle}, and let us define
$$\Krpd(\Cat \Join^\varphi \Grpd):=(\dbar)^{-1}(\ideG)=\{[\elmap{(\alpha,g)}{(\beta,f)},F \Join^\varphi \Grpd]\in\G_{tight}(\Cat \Join^\varphi \Grpd) : \dmap(\alpha)\dmap(\beta)^{-1}=\ideG \}\,,$$ 
which is an open subgroupoid of $\G_{tight}(\Cat \Join^\varphi \Grpd)$.

In order to be able to decompose the groupoid $\Krpd(\Cat \Join^\varphi \Grpd)$ as a union of more treatable groupoids, we need to impose some conditions on the semigroup $\Gamma$.  

\begin{definition}
	Let $\Gamma\subseteq Q$ be a subsemigroup of a group $Q$ with $\Gamma\cap \Gamma^{-1}=\ideQ$. We say that $\Gamma$ is a \emph{join-semilattice} if given $g_1,g_2\in \Gamma$ 
	$$\inf\{g\in \Gamma: g_1,g_2\leq g\}$$
	exists and is unique. We will denote it by $g_1\vee g_2$.
\end{definition}

\begin{remark}\label{rema_joint}
Let $\Cat$ be a LCSC with length function $\dmap:\Lambda\to\Gamma$ satisfying the WFP, and assume that $\Gamma$ is a join-semilattice. Then, given $\alpha,\beta$ with $ \alpha\Cap \beta$, we have that $\dmap(\gamma)=\dmap(\alpha)\vee \dmap(\beta)$ for every $\gamma\in \alpha\vee \beta$. Indeed, let $\gamma\in \alpha\vee \beta$. Since $\alpha,\beta\leq \gamma$, then $\dmap(\alpha),\dmap(\beta)\leq \dmap (\gamma)$, so $\dmap(\alpha)\vee \dmap(\beta)\leq \dmap(\gamma)$.  By the WFP, there exist $\gamma_1,\gamma_2\in \Cat$ such that $\gamma=\gamma_1\gamma_2$ with $\dmap(\gamma_1)=\dmap(\alpha)\vee \dmap(\beta)$. Now,  $\dmap(\alpha),\dmap(\beta)\leq \dmap(\gamma_1)$, and so we have that $\alpha,\beta\leq \gamma_1\leq \gamma$ by Lemma \ref{lemma_equality}. But since $\gamma$ is a minimal extension of $\alpha$ and $\beta$, it follows that $\gamma\approx \gamma_1$, so $\dmap(\gamma)=\dmap(\gamma_1)=\dmap(\alpha)\vee \dmap(\beta)$. In particular we have that $\Cat$ is singly aligned. 
\end{remark}

We now assume that $\Gamma$ is a join-semilattice. Then, given $g\in \Gamma$, we define
$$\Krpd_g(\Cat \Join^\varphi \Grpd):=\{[\elmap{(\alpha,g)}{(\beta,f)},F \Join^\varphi \Grpd]: \dmap(\alpha)=\dmap(\beta)\leq g\}\,.$$
We claim that $\Krpd_g(\Cat \Join^\varphi \Grpd)$ is an open subgroupoid of $\Krpd(\Cat \Join^\varphi \Grpd)$. Let  $[\elmap{(\alpha,g)}{(\beta,f)},F \Join^\varphi \Grpd]$, $ [\elmap{(\delta,h)}{(\eta,l)},F' \Join^\varphi \Grpd]\in \Krpd_g(\Cat \Join^\varphi \Grpd)$ two composable elements with $g_1:=\dmap(\beta)=\dmap(\alpha)\leq g$ and $g_2:=\dmap(\delta)=\dmap(\eta)\leq g$. Since $\beta,\delta\in F$, there exists $\varepsilon\in (\beta\vee \delta)\cap F$, and $\dmap(\varepsilon)=\dmap(\alpha)\vee \dmap(\beta)=g_1\vee g_2 \leq g$ (Remark \ref{rema_joint}).  Then, 
\begin{align*}
[\elmap{(\alpha,g)}{(\beta,f)},F] & =[\elmap{(\alpha,gf^{-1})}{(\beta,s(f))} ,F] \\ & =[\elmap{(\alpha,gf^{-1})}{(\beta,s(f))}\elmap{(\varepsilon,s(\varepsilon))}{(\varepsilon,s(\varepsilon))} ,F] \\
 & =[\elmap{(\alpha (gf^{-1}\cdot \sigma^\beta(\varepsilon) ),\varphi(\sigma^\beta(\varepsilon),gf^{-1}))}{(\varepsilon,s(f))} ,F]
\end{align*}
and 
\begin{align*}
[\elmap{(\delta,h)}{(\eta,l)},F'] & =[\elmap{(\delta,r(h))}{(\eta,lh^{-1})} ,F'] \\ & =[\elmap{(\varepsilon,s(\varepsilon))}{(\varepsilon,s(\varepsilon))}\elmap{(\delta,r(h))}{(\eta,lh^{-1})} ,F'] \\
 & =[\elmap{(\varepsilon,s(\varepsilon))}{ (\sigma^\delta(\varepsilon)\eta, lh^{-1}) } ,F]
\end{align*}
while the product give us
$$[\elmap{(\alpha (gf^{-1}\cdot \sigma^\beta(\varepsilon) ),\varphi(\sigma^\beta(\varepsilon),gf^{-1}))}{(\varepsilon,s(f))} ,F][\elmap{(\varepsilon,s(\varepsilon))}{ (\sigma^\delta(\varepsilon)\eta, lh^{-1}) } ,F]=$$
$$=[\elmap{(\alpha (gf^{-1}\cdot \sigma^\beta(\varepsilon) ),\varphi(\sigma^\beta(\varepsilon),gf^{-1}))}{(\sigma^\delta(\varepsilon)\eta,lh^{-1})} ,F']$$ 
But 
\begin{align*}
\dmap(\alpha (gf^{-1}\cdot \sigma^\beta(\varepsilon))) & =\dmap(\alpha) \dmap(gf^{-1}\cdot \sigma^\beta(\varepsilon))=\dmap(\alpha) \dmap(\sigma^\beta(\varepsilon) \\
 & =\dmap(\alpha)\dmap(\beta)^{-1}\dmap(\varepsilon)=\dmap(\alpha)\dmap(\alpha)^{-1}\dmap(\varepsilon)=\dmap(\varepsilon)=g_1\vee g_2\,.
\end{align*}
and 
$$\dmap(\sigma^\delta(\varepsilon)\eta)=\dmap(\varepsilon)\dmap(\delta)^{-1}\dmap(\eta)=\dmap(\varepsilon)\dmap(\eta)^{-1}\dmap(\eta)=\dmap(\varepsilon)=g_1\vee g_2\,.$$
Therefore,   $[\elmap{(\alpha,g)}{(\beta,f)},F][\elmap{(\delta,h)}{(\eta,l)},F']\in \Krpd_{g_1\vee g_2}(\Cat \Join^\varphi \Grpd)\subseteq \Krpd_{g}(\Cat \Join^\varphi \Grpd)$, as desired.

Moreover, as a consequence of the above computation, given $g_1,g_2\leq g$ we have that $\Krpd_{g_1}(\Cat \Join^\varphi \Grpd)\Krpd_{ g_2}(\Cat \Join^\varphi \Grpd)\subseteq \Krpd_{g}(\Cat \Join^\varphi \Grpd)$.  Then, if $\Gamma$ is countable, there exists an ascending sequence of elements $g_1,g_2,\ldots \in \Gamma$ such that for every $g\in \Gamma$ there exists $n\in \NN$ with $g\leq g_n$. Hence,   $\Krpd (\Cat \Join^\varphi \Grpd)=\bigcup_{i=1}^{\infty}\Krpd_{g_i} (\Cat \Join^\varphi \Grpd)$.
\vspace{.2truecm}

The next step will be to define a cocycle of the groupoids $\Krpd_{g}(\Cat \Join^\varphi \Grpd)$ onto $\Cat^{-1}\Join^\varphi\Grpd$. In order to do that, we will need to make the following assumption.

\begin{definition}\label{def_bigstar}
	Let $\Cat$ be a LCSC with length function $\dmap:\Lambda\to\Gamma$ satisfying the WFP. We will say that  $\Cat$ satisfies property $(\bigstar)$ if given $F\in \Cat_{tight}$ and $g\in \Gamma$, then there exists $\beta_g\in F$ (non-necessarily unique)  with $\dmap(\beta_g)\leq g$ such that, whenever $\alpha\in F$ satisfies $\dmap(\alpha)\leq g$,  we have that $\alpha\leq \beta$. 
\end{definition}

\begin{remark}\label{rema_mark_elements}Let $\Cat$ be a LCSC with length function $\dmap:\Lambda\to\Gamma$ satisfying the WFP and property $(\bigstar)$.
	\begin{enumerate}
		\item Let $F\in \Cat_{tight}$, and let $g\in \Gamma$ and let $\beta_g\in \Cat$ satisfying the conditions of Definition \ref{def_bigstar}. Suppose that there exists another $\beta'_g$ satisfying the same properties than $\beta_g$. Then, $\dmap(\beta_g)=\dmap(\beta'_g)$, and hence by Lemma \ref{lemma_equality} we have that $\beta_g\approx \beta'_g$. Thus, $\beta_g$ is unique up to an invertible. 
		
		\item  Let $F\in \Cat_{tight}$, and let $g\in \Gamma$ and let $\beta_g\in \Cat$ satisfying the conditions of Definition \ref{def_bigstar}. Now, let $\alpha\in \Cat$ such that $s(\beta_g)=s(\alpha)$ and $\dmap(\beta_g)=\dmap(\alpha)$. Recall that 
		$$F':=\elmap{\alpha}{\beta_g}\cdot F=\bigcup_{\beta_g\leq \gamma,\,\gamma\in F}[\alpha\sigma^{\beta_g} (\gamma)]\,.$$
		Let $\beta'_g\in F'$ satisfying the conditions of Definition \ref{def_bigstar}. Then, $\alpha\leq \beta'_g$. 
		On the other hand, there exists $\gamma\in F$ such that $\alpha\sigma^\alpha(\beta'_g)=\beta'_g\leq \alpha\sigma^{\beta_g}(\gamma)$, whence by left cancellation $\sigma^\alpha(\beta'_g)\leq \sigma^{\beta_g}(\gamma)$, and $\beta_g\sigma^\alpha(\beta'_g)\leq \beta_g \sigma^{\beta_g}(\gamma)=\gamma$. Thus, $\beta_g\sigma^\alpha(\beta'_g)\in F$ and $\dmap(\beta_g\sigma^\alpha(\beta'_g))=\dmap(\beta'_g)\leq g$. Then, by property  $(\bigstar)$ we have that $\beta_g\sigma^\alpha(\beta'_g)\leq \beta_g$, whence by left cancellation $\sigma^\alpha(\beta'_g)$ must be an invertible. Thus, $\beta'_g\approx \alpha$.
	\end{enumerate}	
\end{remark}

\begin{proposition}[{c.f.\cite[Lemma 8.8]{OP_LCSC}}]\label{Star-property}
	Let $\Cat$ be a LCSC with length function $\dmap:\Lambda\to\Gamma$ satisfying the WFP, and assume every bounded ascending sequence of elements of $\Gamma$ stabilizes. Then  $\Lambda$ satisfies property $(\bigstar)$.
\end{proposition}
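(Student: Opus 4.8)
The plan is to fix $F\in\Cat_{tight}$ and $g\in\Gamma$ and to locate the required element $\beta_g$ as a $\leq$-maximal element of the sub-poset
$$F_g:=\{\alpha\in F:\dmap(\alpha)\leq g\}\,,$$
ordered by the initial-segment relation $\leq$. First I would observe that $F_g\neq\emptyset$: for any $\alpha\in F$ the vertex $r(\alpha)$ lies in $F$ by heredity and satisfies $\dmap(r(\alpha))=\ideQ\leq g$. So the task reduces to producing inside $F_g$ an element that dominates every member of $F_g$.

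The first main step is an ascending-chain argument. Given a strictly $\leq$-increasing chain $\alpha_1<\alpha_2<\cdots$ in $F_g$, Lemma \ref{lemma_equality} converts it into an ascending chain $\dmap(\alpha_1)\leq\dmap(\alpha_2)\leq\cdots$ in $\Gamma$ that is bounded above by $g$. By hypothesis this chain stabilizes, so $\dmap(\alpha_n)=\dmap(\alpha_{n+1})$ for $n$ large; since $\alpha_n\leq\alpha_{n+1}$ yields $\alpha_n\Cap\alpha_{n+1}$, Lemma \ref{lemma_equality} then gives $\alpha_n\approx\alpha_{n+1}$, i.e. the chain is eventually constant modulo invertibles. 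Hence $F_g$ carries no infinite strictly increasing chain, and every element of $F_g$ lies below some $\leq$-maximal element $\beta_g\in F_g$, which is the candidate for $(\bigstar)$.

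The second, and main, step is to upgrade maximality of $\beta_g$ to the statement that $\beta_g$ is a maximum, that is $\alpha\leq\beta_g$ for every $\alpha\in F_g$. Here I would use that $F$ is directed to obtain $\gamma\in F$ with $\alpha,\beta_g\leq\gamma$, so in particular $\alpha\Cap\beta_g$. Since $\Gamma$ is a join-semilattice, $\Cat$ is singly aligned and by Remark \ref{rema_joint} the minimal common extension $\varepsilon$ of $\alpha$ and $\beta_g$ satisfies $\dmap(\varepsilon)=\dmap(\alpha)\vee\dmap(\beta_g)$. As $\dmap(\alpha),\dmap(\beta_g)\leq g$, this join is $\leq g$, so $\dmap(\varepsilon)\leq g$; and because $\gamma\in\alpha\Cat\cap\beta_g\Cat=\varepsilon\Cat$ we have $\varepsilon\leq\gamma\in F$, whence $\varepsilon\in F$ by heredity. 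Thus $\varepsilon\in F_g$ with $\beta_g\leq\varepsilon$, and maximality of $\beta_g$ forces $\beta_g\approx\varepsilon$. Since $\alpha\leq\varepsilon\approx\beta_g$, we conclude $\alpha\leq\beta_g$, which is exactly the conclusion of Definition \ref{def_bigstar}.

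The delicate point I expect to be the genuine obstacle is this second step, namely \emph{controlling the length of the common extension}. Directedness of $F$ only supplies \emph{some} common extension $\gamma$ of $\alpha$ and $\beta_g$, whose length $\dmap(\gamma)$ may well exceed $g$, so $\gamma$ need not belong to $F_g$. The join-semilattice hypothesis, through Remark \ref{rema_joint}, is precisely what lets us replace $\gamma$ by the minimal common extension $\varepsilon$ of guaranteed length $\dmap(\alpha)\vee\dmap(\beta_g)\leq g$; without it $F_g$ need not be upward directed and a maximal element need not be a maximum, so the proof would break down exactly there.
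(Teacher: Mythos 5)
Your proof is correct, and it is essentially the intended argument: the paper gives no proof of this proposition at all, deferring to \cite[Lemma 8.8]{OP_LCSC} (stated there for $\dmap$ valued in $\NN^k$), and that argument runs exactly along your lines --- stabilization of bounded ascending chains yields a maximal element of $F_g$, and directedness of $F$ together with joins in the value monoid upgrades maximality to a maximum. One point should be made explicit, since it is the only place where your write-up could be challenged: the join-semilattice property of $\Gamma$ invoked in your second step is not among the hypotheses of the proposition as isolated, but it is a standing assumption at this point of the paper (``We now assume that $\Gamma$ is a join-semilattice'', declared just before Definition \ref{def_bigstar}), so you are entitled to it; in the cited reference it is automatic because $\NN^k$ is a lattice. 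Moreover, your closing claim that the argument genuinely needs this is accurate in the strongest sense: the proposition is false without that assumption. For instance, take $\Gamma=\NN\setminus\{1\}\subseteq\ZZ$ (bounded ascending sequences clearly stabilize), let $\Lambda$ be the poset category with objects indexed by $\{0,2,3,4,6\}$ and a unique arrow $\delta_{i,j}$ with source $j$ and range $i$ whenever $j-i\in\Gamma$, and put $\dmap(\delta_{i,j})=j-i$; then $\dmap$ satisfies the WFP, $F=[\delta_{0,6}]$ is a tight (indeed maximal) filter, and for $g=5$ the elements $\delta_{0,2},\delta_{0,3}\in F$ have $\dmap$-values $2,3\leq 5$ but admit no common bound in $F$ of $\dmap$-value $\leq 5$, since their only common extension is $\delta_{0,6}$ and $6\not\leq 5$ in $\Gamma$. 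So property $(\bigstar)$ fails there, and any correct proof must pass through the join hypothesis (or an equivalent) exactly where yours does.
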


Given a LCSC $\Cat$, a discrete groupoid $\Grpd$ acting on $\Cat$ and a category system $(\Cat,\dmap,\Grpd,\varphi)$, by Lemma \ref{lemma_inv} we have that $(\Cat \Join^\varphi \Grpd)^{-1}=\Cat^{-1} \Join^\varphi \Grpd$. Thus, $\Cat^{-1} \Join^\varphi \Grpd$ is a  discrete groupoid.

 \begin{proposition}\label{G_cocycle}
	Let $\Cat$ be a LCSC with length function $\dmap:\Lambda\to\Gamma$ satisfying the WFP, let $\Grpd$ be a discrete groupoid acting on $\Cat$, let $(\Cat,\dmap,\Grpd,\varphi)$ be a pseudo-free category system, let $\Cat \Join^\varphi \Grpd$ be the associated Zappa-Sz\'ep product, and suppose that $\Lambda$ satisfies property $(\bigstar)$. Then, for every $g\in\Gamma$, there exists a continuous groupoid homomorphism
	$$\tmap^{(g)}:\Krpd_g(\Cat \Join^\varphi \Grpd)\to \Cat^{-1} \Join^\varphi \Grpd\,.$$
\end{proposition}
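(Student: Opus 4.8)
The plan is to define $\tmap^{(g)}$ by recording how a germ displaces the distinguished ``marked'' element of length $\leq g$ supplied by property $(\bigstar)$. For each $F\in\Cat_{tight}$ fix, once and for all, a marked element $\beta_g^F\in F$ that is maximal among the elements of $F$ of length $\leq g$ (Definition \ref{def_bigstar}); it is unique up to right multiplication by $\Cat^{-1}$ (Remark \ref{rema_mark_elements}(1)), and using the totally disconnected topology of $\Cat_{tight}$ the choice can be made locally constant in $F$. Put $\eta_F:=(\beta_g^F,s(\beta_g^F))$. Now take $[\elmap{(\alpha,a)}{(\beta,b)},F\Join^\varphi\Grpd]\in\Krpd_g(\Cat\Join^\varphi\Grpd)$, with range filter $F'\Join^\varphi\Grpd$. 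Since $\dmap(\beta)=\dmap(\alpha)\leq g$ forces $\beta\leq\beta_g^F$, right multiplication by the idempotent $\elmap{(\beta_g^F,s(\beta_g^F))}{(\beta_g^F,s(\beta_g^F))}$ (which fixes the germ at $F\Join^\varphi\Grpd$) together with Remark \ref{ZP_comp}(2) brings the element to the canonical form $[\elmap{(\alpha',c')}{(\beta_g^F,s(\beta_g^F))},F\Join^\varphi\Grpd]$ with $\dmap(\alpha')=\dmap(\beta_g^F)$ and $s(\alpha')=r(c')$; explicitly, writing $\beta_g^F=\beta\nu$ one has $\alpha'=\alpha((ab^{-1})\cdot\nu)$ and $c'=\varphi(ab^{-1},\nu)$. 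In this form the image of the marked element is $\tau^{(\alpha',c')}\sigma^{(\beta_g^F,s(\beta_g^F))}(\eta_F)=(\alpha',c')$. Applying Remark \ref{rema_mark_elements}(2) inside $\Cat\Join^\varphi\Grpd$ gives $\alpha'\approx\beta_g^{F'}$, so there is a unique $u\in\Cat^{-1}$ with $\beta_g^{F'}=\alpha'u$, and I set
$$\tmap^{(g)}([\elmap{(\alpha,a)}{(\beta,b)},F\Join^\varphi\Grpd]):=(u^{-1},c')\in\Cat^{-1}\Join^\varphi\Grpd,$$
equivalently the unique $v\in\Cat^{-1}\Join^\varphi\Grpd$ with $t(\eta_F)=\eta_{F'}\,v$.

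The crucial step is well-definedness: I must show the germ determines the canonical datum $(\alpha',c')$ \emph{exactly}, not merely up to an invertible. Two representatives of the same germ, once put in canonical form, share the same $\sigma$-part $(\beta_g^F,s(\beta_g^F))$ and hence agree, after multiplying by a suitable idempotent $e_\zeta$ with $\beta_g^F\leq\zeta\in F$, on the principal ideal generated by $(\zeta,s(\zeta))$. Evaluating at $\zeta$ yields $(\alpha',c')(w,\cdot)=(\alpha'',c'')(w,\cdot)$ with $(w,\cdot)=\sigma^{(\beta_g^F,s(\beta_g^F))}(\zeta,s(\zeta))$ common to both. Here the pseudo-freeness hypothesis enters decisively: by Proposition \ref{ZP_RC} together with Lemma \ref{pseudofree_cat_right_cancellative}, both $\Cat$ and $\Cat\Join^\varphi\Grpd$ are right cancellative, so $(w,\cdot)$ cancels on the right and $(\alpha',c')=(\alpha'',c'')$. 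The invertible $u$ is then unique by left cancellation; and $\dmap(\beta_g^{F'})=\dmap(\beta_g^F)$ (so that $\alpha'\approx\beta_g^{F'}$, not merely $\alpha'\leq\beta_g^{F'}$) follows from maximality applied to $\alpha'\in F'$ and, symmetrically, to the image of $\eta_{F'}$ under the inverse germ, via Lemma \ref{lemma_equality}.

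For the homomorphism property, take composable germs over filters $F\to F'\to F''$, with $t_2(\eta_F)=\eta_{F'}v_2$ and $t_1(\eta_{F'})=\eta_{F''}v_1$. The key observation is that right multiplication by an invertible commutes with the partial bijection $t_1$: since $\eta_{F'}v_2\approx\eta_{F'}$ still lies in the domain of $t_1$ and $t_1$ acts by left multiplication on the portion beyond $\beta_g^{F'}$, one gets $t_1(\eta_{F'}v_2)=(t_1\eta_{F'})v_2$. Hence $t_1t_2(\eta_F)=\eta_{F''}v_1v_2$, so $\tmap^{(g)}$ sends the product $[t_1,F'\Join^\varphi\Grpd][t_2,F\Join^\varphi\Grpd]=[t_1t_2,F\Join^\varphi\Grpd]$ to $v_1v_2=\tmap^{(g)}([t_1,F'\Join^\varphi\Grpd])\,\tmap^{(g)}([t_2,F\Join^\varphi\Grpd])$, and $\tmap^{(g)}$ is a groupoid homomorphism.

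Finally, continuity is local constancy, as $\Cat^{-1}\Join^\varphi\Grpd$ is discrete. On a basic open set the representative $\elmap{(\alpha,a)}{(\beta,b)}$ is fixed, and because the marked classes $[\beta_g^{F}]$ and $[\beta_g^{F'}]$ are locally constant in $F$ (property $(\bigstar)$ and the fact that membership conditions generate the topology of $\Cat_{tight}$), the canonical datum $(\alpha',c')$ and the invertible $u$, hence $v$, are locally constant; shrinking the neighbourhood so that the chosen representatives $\beta_g^F,\beta_g^{F'}$ are literally constant then makes $\tmap^{(g)}$ constant there. I expect the well-definedness step to be the main obstacle, as it is exactly the point forcing right cancellation and thereby explaining the pseudo-free hypothesis; once the germ is known to pin down $(\alpha',c')$ on the nose, the homomorphism identity and continuity follow as in \cite[Lemma 8.8]{OP_LCSC}.
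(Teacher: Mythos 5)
Your construction of $\tmap^{(g)}$ is exactly the paper's: put each germ into canonical form $\elmap{(\alpha',c')}{(\beta_g^F,s(\beta_g^F))}$ using the marked element supplied by property $(\bigstar)$, and read off the displacement $(\xi,h)$ with $\alpha'=\beta_g^{F'}\xi$ (your $(u^{-1},c')$ is the same datum). Two of your three steps are sound and essentially match the paper. For well-definedness the paper expands the cocycle identities by hand and invokes pseudo-freeness directly; you instead quote Proposition \ref{ZP_RC} (with Lemma \ref{pseudofree_cat_right_cancellative}) to get right cancellativity of $\Cat\Join^\varphi\Grpd$ and cancel the common factor $(w,\cdot)$ on the right. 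That is a legitimate and cleaner route -- the paper's computation is in effect a re-proof of Proposition \ref{ZP_RC} in this special case -- and it is available here because the WFP hypothesis is in force. Your homomorphism step is the paper's computation in more abstract form and is also fine.

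The genuine gap is continuity. Your argument rests on the claim that $F\mapsto[\beta_g^F]$ (and $F\mapsto[\beta_g^{F'}]$) is locally constant on $\Cat_{tight}$, justified only by ``property $(\bigstar)$ and the topology''. This does not follow and is false in general: property $(\bigstar)$ constrains each filter separately and says nothing about nearby filters. Concretely, let $\Cat$ be the free monoid on infinitely many generators $\{e_j\}_{j\in\NN}$ at a single object $v$, with $\dmap$ the word length and $g=1$. Then $\{v\}\in\Cat_{tight}$ (any finite set disjoint from $\{v\}$ meets only finitely many of the edges, so some ultrafilter $[e_m e_m\cdots]$ avoids it, as required by Definition \ref{definition2_3_9}), its marked element is $v$, yet every basic neighbourhood $\{G: v\in G,\ \gamma_1,\dots,\gamma_n\notin G\}$ contains the tight filters $[e_j]$ for all but finitely many $j$, whose unique marked element is $e_j\not\approx v$. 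Since marked elements here are literally unique, no choice can restore local constancy, so your final ``shrink the neighbourhood'' step is unavailable. Worse, the displacement datum itself can jump: equip this $\Cat$ with the pseudo-free self-similar $\ZZ$-action $t\cdot e_j=e_{j+1}$, $\varphi(t,e_j)=t^2$; the germ of $\elmap{(v,t)}{(v,v)}$ at $\{v\}$ is sent to $(v,t)$, while its germ at the nearby filter $[e_j]$ is sent to $(v,\varphi(t,\sigma^{v}(e_j)))=(v,t^2)$, because the canonical form at $[e_j]$ must be taken with respect to $\beta_{[e_j]}=e_j$. Note that the paper's continuity argument has a different shape -- it exhibits the preimage of a point $(\xi,h)$ as a union of basic open sets $\Theta\bigl(\elmap{(\beta_{F'}\xi,h)}{(\beta_F,s(\beta_F))},\{G:\beta_F\in G\}\bigr)$, with no coherence claim on the $\beta_F$'s -- but the nontrivial inclusion in that set identity is exactly the invariance that fails in this example, so you cannot repair your step by appealing to it. To close the gap you need an additional ingredient, e.g.\ the hypothesis $\Cat_{tight}=\Cat^{**}$ used elsewhere in this section (for maximal filters, $G\ni\beta_F$ forces $\beta_G\approx\beta_F$ via Lemma \ref{lemma_equality}), or some explicit control of $\varphi(h,\sigma^{\beta_F}(\beta_G))$ for $G$ near $F$; as written, the continuity step fails precisely at non-maximal tight filters, which is where pseudo-freeness and right cancellation give no help.
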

\begin{proof}
Fix $g\in \Gamma$, and for every $F\in \Cat_{tight}$ choose $\beta_{F}$ satisfying property $(\bigstar)$. Let $[\elmap{(\alpha,f)}{(\gamma,s(f))},F]\in \Krpd_g(\Cat \Join^\varphi \Grpd)$. Then, $\dmap(\alpha)=\dmap(\gamma)\leq g$, and hence $\gamma\leq \beta_F$. Therefore,
\begin{align*}
[\elmap{(\alpha,f)}{(\gamma,s(f))},F] & =[\elmap{(\alpha,f)}{(\gamma,s(f))}\elmap{(\beta_F,s(\beta_F))}{(\beta_F,s(\beta_F))},F] \\
& =[\elmap{(\alpha (f\cdot \sigma^\gamma(\beta_F)),\varphi(f,\sigma^\gamma(\beta_F) ))}{(\beta_F,s(\beta_F))},F]\,.
\end{align*}
By Remark \ref{rema_mark_elements} we have that $\alpha (f\cdot \sigma^\gamma(\beta_F))=\beta_{F'} \xi$, where $F':=\elmap{(\alpha,f)}{(\gamma,s(f))}\cdot F$ and $\xi\in \Cat^{-1}$. So, 
$$[\elmap{(\alpha,f)}{(\gamma,s(f))},F]=[\elmap{(\beta_{F'}\xi,\varphi(f,\sigma^\gamma(\beta_F) ))}{(\beta_F,s(\beta_F))},F]\,.$$
Therefore, every element in $\Krpd_g(\Cat \Join^\varphi \Grpd)$ is of the form $[\elmap{(\beta_{F'}\xi,h)}{(\beta_F,s(\beta_F))},F]$ for some $\xi\in \Cat^{-1}$ and $h\in \Grpd$, and then we define
$$\tmap^{(g)}([\elmap{(\beta_{F'}\xi,h)}{(\beta_F,s(\beta_F))},F])=(\xi,h)\,.$$

	 Let us check that $\tmap^{(g)}$ is well defined. To this end, let $[\elmap{(\beta_{F'}\xi_1,h_1)}{(\beta_F,s(\beta_F))},F]$ and $[\elmap{(\beta_{F'}\xi_2,h_2)}{(\beta_F,s(\beta_F))},F]$ in $\Krpd_g(\Cat \Join^\varphi \Grpd)$ with $$[\elmap{(\beta_{F'}\xi_1,h_1)}{(\beta_F,s(\beta_F))},F]=[\elmap{(\beta_{F'}\xi_2,h_2)}{(\beta_F,s(\beta_F))},F]\,.$$ Then there exists $\gamma\in F$ with $\beta_F\leq \gamma$ such that 
	 $$(\elmap{(\beta_{F'}\xi_1,h_1)}{(\beta_F,s(\beta_F))})(\elmap{(\gamma,s(\gamma))}{(\gamma,s(\gamma))})$$
	 $$=\elmap{(\beta_{F'}\xi_1(h_1\cdot \sigma^{\beta_F}(\gamma)),\varphi(h_1,\sigma^{\beta_F}(\gamma) ))}{(\gamma,s(\gamma))}$$
	 $$=(\elmap{(\beta_{F'}\xi_2,h_2)}{(\beta_F,s(\beta_F))})(\elmap{(\gamma,s(\gamma))}{(\gamma,s(\gamma))})$$
	 $$=\elmap{(\beta_{F'}\xi_2(h_2\cdot \sigma^{\beta_F}(\gamma)),\varphi(h_2,\sigma^{\beta_F}(\gamma) ))}{(\gamma,s(\gamma))}\,.$$
	 Therefore,  
	 $$\beta_{F'}\xi_1(h_1\cdot \sigma^{\beta_F}(\gamma))=\beta_{F'}\xi_2(h_2\cdot \sigma^{\beta_F}(\gamma))\qquad\text{and}\qquad \varphi(h_1,\sigma^{\beta_F}(\gamma))=\varphi(h_2,\sigma^{\beta_F}(\gamma))\,.$$
	 In particular, by left cancellation we have that 
	 $$\xi_1(h_1\cdot \sigma^{\beta_F}(\gamma))=\xi_2(h_2\cdot \sigma^{\beta_F}(\gamma))\,,$$ 
	 and 
	 $$\xi_2^{-1}\xi_1(h_1\cdot \sigma^{\beta_F}(\gamma))=h_2\cdot \sigma^{\beta_F}(\gamma)\,,$$
	 $$h^{-1}_2\cdot(\xi_2^{-1}\xi_1(h_1\cdot \sigma^{\beta_F}(\gamma)))=\sigma^{\beta_F}(\gamma)\,,$$
	 $$(h^{-1}_2\cdot\xi_2^{-1}\xi_1)(\varphi(h^{-1}_2,\xi_2^{-1}\xi_1)h_1\cdot \sigma^{\beta_F}(\gamma)))=\sigma^{\beta_F}(\gamma)\,,$$
	 $$\varphi(h^{-1}_2,\xi_2^{-1}\xi_1)h_1\cdot \sigma^{\beta_F}(\gamma))=(h^{-1}_2\cdot\xi_2^{-1}\xi_1)^{-1}\sigma^{\beta_F}(\gamma)\,.$$
	 On the other hand, we have that
	 \begin{align*}
	 s(\gamma) & = \varphi(h_2,\sigma^{\beta_F}(\gamma))^{-1}\varphi(h_1,\sigma^{\beta_F}(\gamma)) = \varphi(h^{-1}_2,h_2\cdot \sigma^{\beta_F}(\gamma))\varphi(h_1,\sigma^{\beta_F}(\gamma)) \\
	 & = \varphi(h^{-1}_2,\xi_2^{-1}\xi_1(h_1\cdot \sigma^{\beta_F}(\gamma)))\varphi(h_1,\sigma^{\beta_F}(\gamma)) \\
	 	 & = \varphi(\varphi(h^{-1}_2,\xi_2^{-1}\xi_1), h_1\cdot \sigma^{\beta_F}(\gamma))\varphi(h_1,\sigma^{\beta_F}(\gamma)) \\
	 	 	 	 & = \varphi(\varphi(h^{-1}_2,\xi_2^{-1}\xi_1) h_1, \sigma^{\beta_F}(\gamma))\,.
	 \end{align*}
	 Since the system is pseudo-free, we have that 
	 $$\varphi(h^{-1}_2,\xi_2^{-1}\xi_1)h_1=s(h_1)\qquad\text{and}\qquad (h^{-1}_2\cdot\xi_2^{-1}\xi_1)^{-1}=s(\beta_F)\,.$$
	 From the second equality we deduce that $\xi_2^{-1}\xi_1=s(\beta_F)$, and hence $\xi_1=\xi_2$. Thus, 
	 $$\varphi(h^{-1}_2,\xi_2^{-1}\xi_1)h_1=\varphi(h^{-1}_2,s(\xi_1)h_1=h^{-1}_2h_1=s(h_1)\,,$$
	 and so $h_2=h_1$. Hence, $\tmap^{(g)}$ is a well-defined map, as desired.

	Now, two composable elements $(x,y)\in \Krpd_g(\Cat \Join^\varphi \Grpd)^{(2)}$ are of the form  $$x=[\elmap{(\beta_{F''}\xi_1,h_1)}{(\beta_{F'},s(\beta_{F'}))},F']\qquad\text{and}\qquad y=[\elmap{(\beta_{F'}\xi_2,h_2)}{(\beta_I,s(\beta_F))},F]\,,$$
	where $\xi_1,\xi_2\in \Cat^{-1}$, $h_1,h_2\in \Grpd$, $F\in \Cat_{tight}$, $F':=\elmap{(\beta_{F'}\xi_2,h_2)}{(\beta_F,s(\beta_F))}\cdot F$ and $F'':=\elmap{(\beta_{F''}\xi_1,h_1)}{(\beta_{F'},s(\beta_{F'}))}\cdot F'$. Since 
	$$xy=[\elmap{(\xi_1(h_1\cdot \xi_2),\varphi(h_1,\xi_2)h_2)}{(\beta_F,s(\beta_F))},F]\,,$$
	we have that 
	$$\tmap^{(g)}(xy)=(\xi_1(h_1\cdot \xi_2),\varphi(h_1,\xi_2)h_2)=(\xi_1,h_1)(\xi_2,h_2)=\tmap^{(g)}(x)\tmap^{(g)}(y)\,.$$
	Thus, $\tmap^{(g)}$ is a groupoid homomorphism.	  
	  
	  Finally, given any $x=(\xi,h)\in \Cat^{-1} \Join^\varphi \Grpd$ we have that 
	  \begin{align*}
	  (\tmap^{(g)}(\xi,h))^{-1}(x) & = \{[\elmap{(\beta_{F'}\xi,h)}{(\beta_F,s(\beta_F))},F]: F\in \Cat_{tight}\} \\ 
	  & = \bigcup_{F\in \Cat_{tight},\,s(\beta_F)=s(h)}  \{[\elmap{(\beta_{F'}\xi,h)}{(\beta_F,s(\beta_F))},G]: G\in \Cat_{tight} \text{ and }\beta_F\in G\}\,, 
	  \end{align*}
	  that is a union of open sets of $\Krpd_g(\Cat \Join^\varphi \Grpd)$, so $\tmap^{(g)}$ is continuous.

\end{proof}

Given two groupoids $\Grpd$ and $\HG$ a groupoid homomorphism $p:\Grpd\to \HG$ is called \emph{strongly surjective} if $p$ is surjective and for every $x\in \Grpd^{(0)}$ we have that $p(r^{-1}(x))=r^{-1}(p(x))$ (see \cite[Definition 5.2.7]{AR}).

In general the map $\tmap^{(g)}:\Krpd_g(\Cat \Join^\varphi \Grpd)\to \Cat^{-1} \Join^\varphi \Grpd$ defined in Proposition \ref{G_cocycle} is not strongly surjective. However, we can consider the subgroupoid 
$$\HG_g:=\tmap^{(g)}(\Krpd_g(\Cat \Join^\varphi \Grpd))=$$
$$\{(\xi,g)\in \Cat^{-1} \Join^\varphi \Grpd: \exists F,F'\in \Cat_{tight},\, s(\beta_F)=d(\phi(g)),\, r(\xi)=s(\beta_{F'})   \}\,.$$ 

Let $F'\in \Krpd_g(\Cat \Join^\varphi \Grpd)^{(0)}$  (here we identify $\Krpd_g(\Cat \Join^\varphi \Grpd)^{(0)}$ with $\Cat_{tight}$), then 
$$
\tmap^{(g)}(r^{-1}(F'))=$$
$$\tmap^{(g)}(\{[\elmap{(\beta_{F'}\xi,h)}{(\beta_F,s(\beta_F))},F] :   r(\xi)=s(\beta_{F'}),\,\exists F\in \Cat_{tight} \text{ with } s(\beta_F)=s(g) \} )=$$ 
$$ \{(\xi,g)\in \Cat^{-1} \Join^\varphi \Grpd:  r(\xi)=s(\beta_{F'}),\,\exists F\in \Cat_{tight} \text{ with } s(\beta_F)=s(g)  \}
$$

and 
$$r^{-1}(\tmap^{(g)}(F'))\cap \HG_g=r^{-1}(s(\beta_{F'}),s(\beta_{F'}))\cap \HG_g=\{(\xi,g)\in \Cat^{-1} \Join^\varphi \Grpd: r(\xi)=s(\beta_{F'})\} \cap \HG_g$$
$$=\{(\xi,g)\in \Cat^{-1} \Join^\varphi \Grpd: r(\xi)=s(\beta_{F'}),\,\exists F\in \Cat_{tight}  \text{ with } s(\beta_F)=s(g)\}\,.$$
Therefore the map  $\tmap^{(g)}:\Krpd_g(\Cat \Join^\varphi \Grpd)\to \HG_g$ is strongly surjective.

 \begin{proposition}\label{amenability}
	Let $\Cat$ be a LCSC with length function $\dmap:\Lambda\to\Gamma$ satisfying the WFP, let $\Grpd$ be a discrete groupoid acting on $\Cat$, let $(\Cat,\dmap,\Grpd,\varphi)$ be a pseudo-free category system, let $\Cat \Join^\varphi \Grpd$ be the associated Zappa-Sz\'ep product, and suppose that $\Lambda$ satisfies property $(\bigstar)$. Suppose that the groupoid $\Cat^{-1} \Join^\varphi \Grpd$ and the group $Q$ are amenable. Moreover, assume that $\Gamma$ is a join-semilattice. Then the following statements are equivalent:
	\begin{enumerate}
		\item $\Grpd_{tight}(\Cat \Join^\varphi \Grpd)$ is amenable,
		\item the kernel of the map $\overline{\dmap}:\Grpd_{tight}(\Cat)\to Q$ is amenable,
		\item $\Grpd_{tight}(\Cat)$ is amenable.
		\end{enumerate}
\end{proposition}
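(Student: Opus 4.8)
The plan is to reduce all three statements to permanence properties of amenability for étale groupoids, organized around the cocycle $\dbar$ of Lemma~\ref{ZP_cocyle} and the strongly surjective homomorphisms $\tmap^{(g)}$ built in Proposition~\ref{G_cocycle}. I will repeatedly use three standard permanence facts for (topological) groupoid amenability, all available in \cite{AR}: (i) amenability passes to open subgroupoids and to directed unions of open amenable subgroupoids; (ii) for a continuous cocycle $c\colon\Grpd\to Q$ into a discrete amenable group $Q$, the groupoid $\Grpd$ is amenable if and only if $c^{-1}(\ideG)$ is amenable; and (iii) if $p\colon\Grpd\to\HG$ is a continuous strongly surjective homomorphism with both $\HG$ and the kernel $p^{-1}(\HG^{(0)})$ amenable, then $\Grpd$ is amenable.

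I would first settle $(2)\Leftrightarrow(3)$. The cocycle $\dbar$ restricts to $\overline{\dmap}\colon\Grpd_{tight}(\Cat)\to Q$, whose kernel is exactly the subgroupoid appearing in $(2)$. Since $Q$ is amenable by hypothesis, fact (ii) applied to $\overline{\dmap}$ yields that $\Grpd_{tight}(\Cat)$ is amenable precisely when $\ker(\overline{\dmap})$ is, which is $(2)\Leftrightarrow(3)$.

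The substance is $(1)\Leftrightarrow(2)$. Applying fact (ii) to $\dbar$ shows that $(1)$ is equivalent to amenability of $\Krpd(\Cat\Join^\varphi\Grpd)=\dbar^{-1}(\ideG)$; and by the discussion preceding the statement this groupoid is the directed union $\bigcup_{g\in\Gamma}\Krpd_g(\Cat\Join^\varphi\Grpd)$ of open subgroupoids, so by fact (i) it is amenable if and only if each $\Krpd_g(\Cat\Join^\varphi\Grpd)$ is. For a fixed $g$ I would feed the homomorphism $\tmap^{(g)}\colon\Krpd_g(\Cat\Join^\varphi\Grpd)\to\HG_g$ into fact (iii): its target $\HG_g$ is a subgroupoid of the amenable discrete groupoid $\Cat^{-1}\Join^\varphi\Grpd$ and hence amenable by (i), and $\tmap^{(g)}$ is strongly surjective. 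The kernel $(\tmap^{(g)})^{-1}(\HG_g^{(0)})$ consists of the classes with trivial group coordinate, which, under the identification $F\mapsto F\Join^\varphi\Grpd$ of Remark~\ref{ZP_comp}(1), is exactly the ``length-$\le g$'' subgroupoid $\Krpd_g(\Cat)$ of $\Grpd_{tight}(\Cat)$; these form a directed family with $\bigcup_{g}\Krpd_g(\Cat)=\ker(\overline{\dmap})$. Granting $(2)$, each $\Krpd_g(\Cat)$ is amenable as an open subgroupoid of $\ker(\overline{\dmap})$, whence $\Krpd_g(\Cat\Join^\varphi\Grpd)$ is amenable by (iii), whence $\Krpd(\Cat\Join^\varphi\Grpd)$ and finally $\Grpd_{tight}(\Cat\Join^\varphi\Grpd)$ are amenable by (i) and (ii); this is $(1)$. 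Conversely, granting $(1)$, each $\Krpd_g(\Cat\Join^\varphi\Grpd)$ is amenable as an open subgroupoid, so its open subgroupoid $\Krpd_g(\Cat)$ is amenable by (i), and the directed union $\ker(\overline{\dmap})=\bigcup_g\Krpd_g(\Cat)$ is amenable, giving $(2)$.

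The main obstacle is the identification of $(\tmap^{(g)})^{-1}(\HG_g^{(0)})$ with the group-trivial subgroupoid $\Krpd_g(\Cat)$ and the verification that $\{\Krpd_g(\Cat)\}_g$ and $\{\Krpd_g(\Cat\Join^\varphi\Grpd)\}_g$ genuinely are directed families of \emph{open} subgroupoids exhausting $\ker(\overline{\dmap})$ and $\Krpd(\Cat\Join^\varphi\Grpd)$ respectively; this amounts to unwinding the explicit normal form for elements of $\Krpd_g(\Cat\Join^\varphi\Grpd)$ obtained in the proof of Proposition~\ref{G_cocycle}, together with the order and intersection computations of Remark~\ref{ZP_comp}. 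Once these structural identifications are in place, the equivalences are a bookkeeping application of the permanence properties (i)--(iii).
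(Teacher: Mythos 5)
Your proof is correct and follows essentially the same route as the paper's: both settle $(2)\Leftrightarrow(3)$ via the cocycle $\overline{\dmap}$ into the amenable group $Q$, and both handle the hard direction by reducing, via $\dbar$, to amenability of $\Krpd(\Cat\Join^\varphi\Grpd)=\bigcup_{g\in\Gamma}\Krpd_g(\Cat\Join^\varphi\Grpd)$ and then applying the strongly surjective homomorphisms $\tmap^{(g)}$ onto the amenable groupoids $\HG_g$, with $\ker\tmap^{(g)}$ identified as an open subgroupoid of $\ker\overline{\dmap}$. The only immaterial difference is in the easy direction, where the paper obtains $(1)\Rightarrow(3)$ at once from the fact that $\Grpd_{tight}(\Cat)$ is an open subgroupoid of $\Grpd_{tight}(\Cat\Join^\varphi\Grpd)$, whereas you route $(1)\Rightarrow(2)$ through the subgroupoids $\Krpd_g(\Cat)$.
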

\begin{proof}
$(2)$ and $(3)$ are equivalent due to 	\cite[Corollary 4.5]{RW} and that $\ker \overline{\dmap}$ is an open subgroupoid of $\Grpd_{tight}(\Cat)$. That $(1)$ implies $(3)$ is because $\Grpd_{tight}(\Cat)$ is an open subgroupoid of $\Grpd_{tight}(\Cat \Join^\varphi \Grpd)$. So, it is enough to prove that $(2)$ implies $(1)$.

 Let $\bar{\tmap}:\Grpd_{tight}(\Cat \Join^\varphi \Grpd)\to Q$ be the cocycle defined in Lemma \ref{ZP_cocyle}. By \cite[Corollary 4.5]{RW}, it is enough to prove that $\bar{\dmap}^{-1}(\ideQ)= \Krpd(\Cat \Join^\varphi \Grpd)$ is amenable.  Moreover, since $\Krpd(\Cat \Join^\varphi \Grpd)=\bigcup_{g\in \Gamma} \Krpd_g(\Cat \Join^\varphi \Grpd)$, we have that $\Krpd(\Cat \Join^\varphi \Grpd)$ is amenable if $\Krpd_g(\Cat \Join^\varphi \Grpd)$ is amenable for every $g\in \Gamma$ \cite[Section 5.2(c)]{AR}. Now, let $\tmap^{(g)}:\Krpd_g(\Cat \Join^\varphi \Grpd)\to \Cat^{-1} \Join^\varphi \Grpd$ defined in Proposition \ref{G_cocycle}. By assumption $\Cat^{-1} \Join^\varphi \Grpd$ is amenable, and thgus $\HG_g:=\tmap^{(g)}(\Krpd_g(\Cat \Join^\varphi \Grpd))\subseteq \Cat^{-1} \Join^\varphi \Grpd$ is amenable and the map $\tmap^{(g)}:\Krpd_g(\Cat \Join^\varphi \Grpd)\to \HG_g$ is strongly surjective (see the comments after Proposition \ref{G_cocycle}). Then, by \cite[Theorem 5.2.14]{AR}, $\Krpd_g(\Cat \Join^\varphi \Grpd)$ is amenable if $\ker \tmap^{(g)}$ is amenable. But observe that, if we identify $\Grpd_{tight}(\Cat)$ with $\Grpd_{tight}(\Cat \Join^\varphi \Grpdu)$ , then we have that 
$$\ker \tmap^{(g)}=\Krpd_g(\Cat \Join^\varphi \Grpd)\cap \Grpd_{tight}(\Cat)\subseteq \ker \overline{\dmap}\cap \Grpd_{tight}(\Cat)\,,$$ so  $\ker \tmap^{(g)}$ is an open subgroupoid of $\ker \overline{\dmap}$. Thus, $\ker \tmap^{(g)}$ is amenable by hypothesis. 
\end{proof}

 \begin{corollary}\label{corol_amenability}
	Let $\Cat$ be a LCSC with length function $\dmap:\Lambda\to\Gamma$ satisfying the UFP, let $\Grpd$ be a discrete groupoid acting on $\Cat$, let $(\Cat,\dmap,\Grpd,\varphi)$ be a pseudo-free category system, let $\Cat \Join^\varphi \Grpd$ be the associated Zappa-Sz\'ep product, and suppose that $\Lambda$ satisfies property $(\bigstar)$. Moreover, assume that $\Gamma$ is a join-semilattice. Suppose that the groupoid $\Grpd$ and the group $Q$ are amenable. Then $\Grpd_{tight}(\Cat \Join^\varphi \Grpd)$ is an amenable groupoid. 
\end{corollary}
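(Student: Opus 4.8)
The plan is to obtain the statement from Proposition \ref{amenability}: the only hypotheses of that proposition not already assumed here are the amenability of $\Cat^{-1}\Join^\varphi\Grpd$ and the amenability of $\Grpd_{tight}(\Cat)$ (condition (3) there), and I would show that strengthening the WFP to the UFP, together with amenability of $\Grpd$, supplies both. Note first that, by Remark \ref{rema_joint}, the join-semilattice hypothesis and the WFP (which the UFP entails) make $\Cat$ singly, hence finitely, aligned, so the tight-groupoid constructions are available.

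The key observation is that the UFP forces $\Cat^{-1}=\Cat^0$. Indeed, if $g\in\Cat^{-1}$ then $\dmap(g)=\ideQ$ by Remark \ref{rema_WFP}(1), and $g=r(g)g=g\,s(g)$ exhibits two factorizations of $g$ with degrees $\ideQ,\ideQ$; uniqueness of factorization then gives $g=s(g)\in\Cat^0$. Thus $\Cat$ is a category of paths. Consequently, by Lemma \ref{lemma_inv} and the standing identification $\Cat^0=\Grpdu$, the discrete groupoid $\Cat^{-1}\Join^\varphi\Grpd=\Cat^0{}_s\times_r\Grpd$ is isomorphic to $\Grpd$ through $(r(g),g)\mapsto g$, so it is amenable because $\Grpd$ is. This verifies the remaining amenability hypothesis of Proposition \ref{amenability}, which therefore reduces the claim to proving that $\Grpd_{tight}(\Cat)$ is amenable.

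Since $\Cat$ is now a finitely aligned category of paths satisfying the WFP $=$ UFP and property $(\bigstar)$, with $Q$ amenable, the amenability of $\Grpd_{tight}(\Cat)$ is precisely the base case carried out in \cite[Section 8]{OP_LCSC}, and I would invoke their result. For completeness, the mechanism is the same cocycle argument: $\overline{\dmap}\colon\Grpd_{tight}(\Cat)\to Q$ has amenable codomain, so by \cite[Corollary 4.5]{RW} it suffices that $\ker\overline{\dmap}=\bigcup_{g\in\Gamma}\Krpd_g(\Cat)$ be amenable, and this is an increasing union of open subgroupoids, so it suffices that each $\Krpd_g(\Cat)$ be amenable (\cite[Section 5.2(c)]{AR}).

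The main, and essentially the only, obstacle is the amenability of the pieces $\Krpd_g(\Cat)$, and here the reduction used in Proposition \ref{amenability} degenerates: because $\Cat^{-1}=\Cat^0$, the target $\Cat^{-1}=\Cat^0$ of the map $\tmap^{(g)}$ of Proposition \ref{G_cocycle} is a bundle of units, so $\tmap^{(g)}$ carries no information and cannot be used to peel off $\Krpd_g(\Cat)$. Instead one argues directly that $\Krpd_g(\Cat)$ is principal and proper: by Remark \ref{rema_mark_elements}(2), with the invertible part now trivial, every element of $\Krpd_g(\Cat)$ has the form $[\tau^{\beta_{F'}}\sigma^{\beta_F},F]$ and is thus completely determined by its range $F'$ and source $F$; the degree bound $\dmap(\beta_F)\le g$ together with property $(\bigstar)$ keeps the orbits uniformly finite, giving properness. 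A proper groupoid is amenable, and the increasing union of these amenable open subgroupoids is amenable, which closes the argument. Pinning down properness of $\Krpd_g(\Cat)$ from the degree bound is the delicate point, and it is exactly what is established in \cite[Section 8]{OP_LCSC}.
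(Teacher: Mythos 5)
Your proposal is correct and follows the same route as the paper's proof: observe that the UFP forces $\Cat$ to have no nontrivial inverses (your factorization argument $g=r(g)g=g\,s(g)$ is the right justification of what the paper merely asserts), hence $\Cat^{-1}\Join^\varphi\Grpd\cong\Grpd$ is amenable, then apply Proposition \ref{amenability} and delegate the amenability of $\ker\overline{\dmap}\subseteq\Grpd_{tight}(\Cat)$ to \cite[Section 8]{OP_LCSC}. Your closing sketch of the mechanism inside that citation is extra commentary rather than a deviation, since like the paper you ultimately rest that step on the cited result.
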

\begin{proof}
First observe that, since $\dmap:\Lambda\to\Gamma$ has the UFP, $\Cat$ has no inverses. So, $\Cat^{-1} \Join^\varphi \Grpd$ is isomorphic to $\Grpd$. Then, by Proposition \ref{amenability}, it is enough to prove that the kernel of the map $\overline{\dmap}:\Grpd_{tight}(\Cat)\to Q$ is amenable. But this was shown in \cite[Section 8]{OP_LCSC}.
\end{proof}

\begin{example}
	Let $\Cat$ be the category with objects $\{v_i\}_{i\in \ZZ}$ and freely generated by the morphisms $\{\alpha_i\}_{i\in \ZZ}$, $\{\gamma_i\}_{i\in \ZZ}$ and $\{\gamma^{-1}_i\}_{i\in \ZZ}$, such that 
	$$s(\alpha_i)=v_i\qquad \text{and} \qquad r(\alpha_i)=v_{i+1}\,,$$
	$$s(\gamma_i)=v_i\qquad \text{and} \qquad r(\gamma_i)=v_{i}\,,$$
		$$s(\gamma^{-1}_i)=v_i\qquad \text{and} \qquad r(\gamma^{-1}_i)=v_{i}\,,$$
		for every $i\in \ZZ$, modulo the relation 
		$$\gamma_i\gamma^{-1}_i=\gamma_i^{-1}\gamma_i=v_i\,,$$
		for every $i\in \ZZ$. $\Cat$ is a left and right cancellative small category. We can define the length function $\dmap:\Cat\to \NN$ given by $\dmap(\alpha_i)=1$ and $\dmap(\gamma_i)=\dmap(\gamma_i^{-1})=0$ for every $i\in \ZZ$. Then $\dmap$ satisfies the WFP and $\Cat^{-1}$ is generated by $\{\gamma_i,\gamma_i^{-1}\}_{i\in \ZZ}$. Now we define the groupoid $\Grpd$ with unit space $\Grpdu=\{v_i\}_{i\in \ZZ}$ and generated by $\{g_i\}_{i\in \ZZ}$ with $s(g_i)=v_i$ and $r(g_i)=v_{i-1}$. We define $\phi:\Grpd\to \PisoG$ such that $\phi(g_i):v_i\Cat \to v_{i-1}\Cat$ is such that it translates the path one position to the left. Observe that then 
		$$\phi(g_i)_{|\gamma_i}=\phi(g_i)\qquad \text{and}\qquad \phi(g_i)_{|\alpha_{i-1}}=\phi(g_{i-1})\,$$
		for every $i\in \ZZ$. Therefore, the action of $\Grpd$ on $\Cat$ is self-similar, and we define the cocyle $\varphi:\Grpd {}_d\times_r \Cat\to \Grpd$ by $\varphi(g_i,\gamma_i):=g_i$ and $\varphi(g_{i},\alpha_{i-1}):=g_{i-1}$. Observe also that $(\Cat,\dmap,\Grpd,\varphi)$ is a pseudo-free category system, and hence $\Grpd_{tight}(\Cat \Join^\varphi \Grpd)$ is a Hausdorff groupoid by Corollary \ref{ZP_pseudofree_Hausdorff}.

		We will check that the groupoid $\Grpd_{tight}(\Cat \Join^\varphi \Grpd)$ is topologically free and minimal. Let $\beta_1, \beta_2\in \Cat$ with $r(\beta_1)=r(\beta_2)$, let $a_1,a_2\in \Grpd$ such that $r(a_1)=s(\beta_1)=v_k$, $r(a_2)=s(\beta_2)=v_l$ and $s(a_1)=s(a_2)=v_j$, and assume that $\beta_1(a_1\cdot \delta)\Cap \beta_2(a_2\cdot \delta)$ for every $\delta\in v_j\Cat$. We claim that then $\beta_1=\beta_2$. Indeed, first observe that $\beta_1(a_1\cdot v_j)\Cap \beta_2(a_2\cdot v_j)$, so $\beta_1\Cap \beta_2$. But this mean that either  $\beta_1\leq \beta_2$ or $\beta_2\leq \beta_1$.  Suppose that $\beta_1\leq \beta_2$, so that there exists $\gamma\in v_k\Cat$ such that $\beta_1\gamma=\beta_2$. But, by hypothesis, $\beta_1 (a_1\cdot \delta)\Cap \beta_1\gamma (a_2\cdot \delta)$ for every $\delta\in v_j\Cat$. Thus, $\beta_1 (a_1\cdot \delta)\leq  \beta_1\gamma (a_2\cdot \delta)$ for every $\delta\in v_j\Cat$. Then, by left cancellation, $a_1\cdot \delta \leq  \gamma (a_2\cdot \delta)$ for every $\delta\in v_j\Cat$. But this forces $\gamma=v_k$, and hence $\beta_1=\beta_2$.
		
		Now, since $\beta_1=\beta_2$, we have that $a_1=a_2$, and therefore the set $F=\{v_j\}$ satisfies the conditions of  Proposition \ref{ZP_TP}(3). So, $\Grpd_{tight}(\Cat \Join^\varphi \Grpd)$  is topologically free.
		
		To check minimality take $\beta_1,\beta_2\in \Cat$ and let $F=\{r(\beta_2)\}$. Now, let $g\in \Grpd$ be the unique element such that $s(g)=r(\beta_2)$ and $r(g)=s(\beta_1)$. Then, we have that $s(\beta_1) \Cat (g\cdot r(\beta_2))=s(\beta_1) \Cat s(\beta_1\neq \emptyset$. Thus, condition $(3)$ of Proposition \ref{ZP_Min} is satisfied, and hence $\Grpd_{tight}(\Cat \Join^\varphi \Grpd)$	is minimal.	
		
		 Now we will show that $\Grpd_{tight}(\Cat \Join^\varphi \Grpd)$ is an amenable groupoid. 
		First we will check that $\Cat^{-1} \Join^\varphi \Grpd$ is an amenable groupoid. Indeed, 
		$$\Cat^{-1} \Join^\varphi \Grpd=\{(\gamma_i^n, g)\in \Cat \Join^\varphi \Grpd: i,n\in \ZZ, r(g)=v_i\}\,,$$
		with 
		$$(\gamma_i^n, g)\cdot (\gamma_j^m, h)=(\gamma_i^{n+m}, gh)\,,$$
		for $n,m\in \ZZ$, $g,h\in \Grpd$ with $r(g)=v_i$, $s(g)=v_j$ and $r(h)=v_j$. Then, we can define the homomorphism $c:\Cat^{-1} \Join^\varphi \Grpd \to \ZZ$ by $c(\gamma_i^n, g)=n$, with $\ker\, c$ isomorphic to $\Grpd$. Therefore  $\Cat^{-1} \Join^\varphi \Grpd$ is amenable. 
		
Now, by Proposition \ref{amenability}, $\Grpd_{tight}(\Cat \Join^\varphi \Grpd)$ is amenable if and only if $\Grpd_{tight}(\Cat)$ is amenable. To determine amenability of $\Grpd_{tight}(\Cat)$, first observe that $(\Cat,\dmap)$ is a Levi category \cite[Theorem 3.3]{LV2} and, since $\Cat$ is left cancellative, we have that $\Cat$ is the Zappa-Sz\'ep product of the free category $B^*$ generated by a transversal of generators of maximal right principal ideals and the groupoid $\Cat^{-1}$ \cite[Theorem 5.10]{LW}. Thus, $\Cat\cong B^*\rtimes \Cat^{-1}$. But then $(B^*,\dmap)$ has the UFP and $\Cat^{-1}\cong \bigsqcup_{i\in \ZZ}\ZZ$ is an amenable groupoid. Moreover, since $\Cat$ is right cancellative, we have that $(\Cat , \textbf{d}, \Grpd, \varphi)$ is a pseudo-free category system by Proposition \ref{ZP_RC}.  Thus, $\Grpd_{tight}(\Cat)$ is amenable by Corollary \ref{corol_amenability}. 		
 \end{example}

\end{document}